\documentclass[11pt, a4paper]{article}

\usepackage{bm}
\usepackage{amssymb, amsmath, amsthm}
\usepackage{color}
\usepackage{array}
\newcolumntype{C}[1]{>{\centering}m{#1}}
\usepackage{enumitem}
\usepackage{graphicx}
\usepackage{subcaption}
\usepackage{tikz}
\usepackage{authblk}

\newcommand{\grobner}{Gr\"{o}bner }
\newcommand{\field}[1]{\mathbb{#1}}
\newcommand{\pset}[1]{\mathcal{#1}}
\newcommand{\p}[1]{\bm{#1}}
\newcommand{\kx}{\field{K}[\p{x}]}
\newcommand{\fk}{\field{K}}

\newcommand{\ideal}[1]{\mathfrak{#1}}
\newcommand{\bases}[1]{\langle #1 \rangle}

\DeclareMathOperator{\rank}{rank}
\DeclareMathOperator{\rterm}{\mathfrak{R}}
\DeclareMathOperator{\ess}{Ess}

\DeclareMathOperator{\lc}{lc}
\DeclareMathOperator{\lt}{lt}
\DeclareMathOperator{\lv}{lv}

\DeclareMathOperator{\term}{\mathfrak{T}}
\DeclareMathOperator{\ini}{ini}
\DeclareMathOperator{\sat}{sat}

\DeclareMathOperator{\lcm}{lcm}

\newtheorem{theorem}{Theorem}[section]
\newtheorem{proposition}[theorem]{Proposition}
\newtheorem{lemma}[theorem]{Lemma}
\newtheorem{corollary}[theorem]{Corollary}

\theoremstyle{definition}
\newtheorem{definition}[theorem]{Definition}
\newtheorem{example}[theorem]{Example}

\theoremstyle{remark}

\numberwithin{equation}{section}

\begin{document}

\title{On the Reduced Gr\"obner Bases of Blockwise Determinantal Ideals}

\author{Chenqi Mou}

\author{Qiuye Song}
\affil{LMIB --  School of Mathematical Sciences \\
Beihang University, Beijing 100191, China\\
\{chenqi.mou/qiuye.song\}@buaa.edu.cn}


\date{}
\maketitle

\begin{abstract}
  Blockwise determinantal ideals are those generated by the union of all the minors of specified sizes in certain blocks of a generic matrix, and they are the natural generalization of many existing determinantal ideals like the Schubert and ladder ones. In this paper we establish several criteria to verify whether the \grobner bases of blockwise determinantal ideals with respect to (anti-)diagonal term orders are minimal or reduced. In particular, for Schubert determinantal ideals, while all the elusive minors form the reduced \grobner bases when the defining permutations are vexillary, in the non-vexillary case we derive an explicit formula for computing the reduced \grobner basis from elusive minors which avoids all algebraic operations. The fundamental properties of being normal and strong for W-characteristic sets and characteristic pairs, which are heavily connected to the reduced \grobner bases, of Schubert determinantal ideals are also proven. 
\end{abstract}

\noindent{\small {\bf Key words: }\grobner basis, Schubert determinantal ideal, ladder determinantal ideal, reduction, triangular set}


\section{Introduction}
\label{sec:intro}

\grobner bases of multivariate polynomial ideals, first introduced by Buchberger in his PhD thesis \cite{B1965A}, are a finite set of generators which characterize the initial ideals. Good properties of \grobner bases together with effective algorithms for computing them \cite{B85G,F1999A,F2002A} make \grobner bases an indispensable tool to study multivariate polynomial ideals in an algorithmic way \cite{CLO1997I}.

Determinantal ideals are those generated by specific minors of a generic matrix with variables as its entries and they are a fundamental algebraic object of general interest with deep connections to many subjects like representation theory and combinatorics \cite{BV06D}. In the study of \grobner bases of determinantal ideals, Sturmfels first introduces the Robinson-Schensted-Knuth correspondence \cite{Knu70p} to use Young bitableaux \cite{FUL97Y} as the combinatorial representations of standard monomials, namely products of comparable minors in the theory of straightening law \cite{DRS74o}. The theories and methods are considerably developed for the study of \grobner bases of determinantal ideals in \cite{BC03g,BCR22d} and the \grobner bases of different kinds of determinantal ideals like the ladder ones are also identified \cite{CON95L,GOR07M}. 

Recently there is a rising trend to study the \grobner bases of polynomial ideals with combinatorial significance. In particular, the Schubert determinantal ideal $I_w$ of a permutation $w\in S_n$ is the corresponding ideal of matrix Schubert variety and is closely related to the (double) Schubert polynomial of that permutation, a central tool in Schubert enumerative calculus \cite{LS82p, BJS93s,FS94s,dua05m}. Fulton simplified the original generating set of $I_w$ by introducing the essential set of $w$ \cite{FUL92F}. These new generators, called Fulton generators in the literature, are further identified as the \grobner basis of $I_w$ with respect to (w.r.t. hereafter) any anti-diagonal term order \cite{KM05g}. In particular, deep relationships among the initial ideal of $I_w$, its Stanley-Reisner simplicial complex, the double Schubert polynomial of $w$, and reduced pipe dreams of $w$ are established too \cite{KM05g}, fully revealing the potential of identifying \grobner bases of the Schubert determinantal ideals, for they furnish an effective tool to study such ideals in both theoretical and computational ways. Then these results were extended to the diagonal term order and different generators of $I_w$ for different kinds of permutations and to other kinds of ideals: Fulton generators are identified as the \grobner basis of $I_w$ w.r.t. any diagonal term order for a vexillary (namely 2143-avoiding) permutation $w$ \cite{KMY09g}; CDG generators are identified as the \grobner basis of $I_w$ w.r.t. any diagonal term order for a banner permutation $w$, together with established relationships with bumpless pipe dreams \cite{HPW22G,LLS21b}; the \grobner bases of the Kazhdan-Lustig ideals are also obtained \cite{WY12a,EH19a}.

In this paper we are interested in the reduced \grobner bases of determinantal ideals like the Schubert and ladder ones. A \grobner basis of a polynomial ideal is \emph{minimal} if its polynomials do not permit redundant leading terms and is further called \emph{reduced} if no reduction of the terms of its polynomials is possible. Reduced \grobner bases of polynomial ideals are unique w.r.t. the term orders and thus they are canonical representations of the ideals. Furthermore, reduced \grobner bases w.r.t. the lexicographic term order have deep connections to triangular sets, another kind of fundamental tool in computational ideal theory \cite{l91n,W2016o,WDM20d}. Reduced \grobner bases can be obtained from ordinary ones by first transforming them into minimal ones and then performing reduction among the polynomials. With Fulton generators identified as \grobner bases of Schubert determinantal ideals, minimal \grobner bases for such ideals were studied in \cite{GY22M}, where the authors introduced the notion of elusive minors and proved that they form minimal \grobner bases of Schubert determinantal ideals. 

In this paper we study reduced \grobner bases of the so-called ``blockwise'' determinantal ideals which are inspired by the definitions of Schubert, ladder, and mixed ladder determinantal ideals, for their generators are all in the form of taking the union of all the minors of the same sizes from a sequence of specified blocks of the generic matrix. To study reduced \grobner bases we focus on reducibility of terms of polynomials in the \grobner bases, which are themselves homogeneous polynomials expanded from the minor generators. The main contributions of this paper are: (1) we unify many existing determinantal ideals in the literature in the framework of blockwise determinantal ideals and present several sufficient conditions for their minor generators to form the \grobner bases; (2) by introducing the notion of length of an arbitrary term in a block we establish the criterion for testing reducibility of this term by the minor generators in that block of the blockwise determinantal ideal, and it naturally leads to new easy-to-use criteria to verify minimal and reduced \grobner bases; (3) in particular, we are able to prove that elusive minors of Schubert determinantal ideals, which are minimal by \cite{GY22M}, form their reduced \grobner bases if and only if the permutations are vexillary (when we were preparing this manuscript we found that the same results also appear in \cite{S23m}); (4) for non-vexillary permutations we fully characterize the process of polynomial reduction among the elusive minors to provide an explicit formula for performing the reduction which avoids all algebraic polynomial operations; (5) based on the above theories for reduced \grobner bases of Schubert determinantal ideals, two fundamental properties of W-characteristic sets and the characteristic pairs of such ideals are proven: this may initialize the study of Schubert determinantal ideals via the tool of triangular sets.

\section{Preliminaries}
\label{sec:pre}

Throughout this paper $\fk$ is a field. For any two positive integers $m \leq n$, we use $[n]$ and $[m, n]$ to denote the sets $\{1, \ldots, n\}$ and $\{m, \ldots, n \}$ respectively. 

\subsection{\grobner bases and W-characteristic set}
\label{sec:gb-ts}

\subsubsection{\grobner bases}
\label{sec:gb}

Consider the multivariate polynomial ring $\fk[x_1, \ldots, x_n]$ in $n$ variables. A \emph{term} in $\fk[x_1, \ldots, x_n]$ is a product of pure powers of the variables $x_1, \ldots, x_n$. For the set of all terms in $\fk[x_1, \ldots, x_n]$, one can associate a \emph{term order} $<$ satisfying the following conditions: (1) it is a total order; (2) for any three terms $\p{u}, \p{v}$ and $\p{w}$, $\p{u} < \p{v}$ implies $\p{u}\p{w} < \p{v}\p{w}$; (3) any subset of terms contains a minimal element w.r.t. $<$. Term orders play a fundamental role in the theory of \grobner bases, and typical term orders include the lexicographic and degree reversed lexicographic ones, etc. \cite{CLO1997I}. In particular, fix a variable order $x_{i_1} < \cdots < x_{i_n}$. Then the lexicographic term order $<_{\rm lex}$ induced by it is defined as follows: $x_1^{\alpha_1}\cdots x_n^{\alpha_n} <_{\rm lex} x_1^{\beta_1}\cdots x_n^{\beta_n}$ if and only if there exists an integer $k \in [n]$ such that $\alpha_{i_j} = \beta_{i_j}$ for $j = k+1, \ldots, n$ and $\alpha_{i_k} < \beta_{i_k}$. 

The greatest term of a polynomial $F\in \fk[x_1, \ldots, x_n]$ w.r.t. a term order $<$ is called its \emph{leading term} and denoted by $\lt_<(F)$. The coefficient of $F$ w.r.t. $\lt_<(F)$ is called its \emph{leading coefficient} and denoted by $\lc_<(F)$. When no ambiguity may occur, $\lt_{<}(F)$ and $\lc_<(F)$ are also written as $\lt(F)$ and $\lc(F)$ respectively for simplicity. Fix a term order $<$, then for any subset $\pset{F} \subseteq \fk[x_1, \ldots, x_n]$, denote $\lt(\pset{F}) = \{\lt(F): F \in \pset{F}\}$.

\begin{definition}\rm \label{def:gb}
Let $\ideal{I}$ be an ideal in $\fk[x_1, \ldots, x_n]$ and $<$ be a term order. Then a finite polynomial set $\pset{G} \subseteq \ideal{I}$ is called a \emph{\grobner basis} of $\ideal{I}$ w.r.t. $<$ if $\bases{\lt(\pset{G})} = \bases{\lt(\ideal{I})}$.   
\end{definition}

\grobner bases possess many desirable properties and are a fundamental tool to study multivariate polynomial ideals algorithmically \cite{B1965A,B85G}. A \grobner basis $\pset{G}$ w.r.t. a term order $<$ is said to be \emph{minimal} if for each $G\in \pset{G}$, $\lt(G)$ cannot be divided by the leading term of any other polynomial in $\pset{G}$ and $\lc(G)=\pm 1$. In other words, minimal \grobner basis does not contain polynomials with redundant leading terms. Note that usually it is required that polynomials in the minimal \grobner basis are \emph{monic}. In this paper we work with minors of a generic matrix as the generators of determinantal ideals and their leading coefficients are always $\pm 1$, and loosening the requirement on the leading coefficients a bit saves us from the trouble of changing the signs of the leading coefficients. 

Denote the set of all terms effectively appearing in a polynomial $F \in \fk[x_1, \ldots, x_n]$ by $\term(F)$. Let $P, Q \in \fk[x_1, \ldots, x_n]$ be two polynomials and $<$ be a term order. Then $P$ is said to be \emph{reducible} modulo $Q$ if there exists a term $\p{t} \in \term(P)$ such that $\lt(Q) | \p{t}$, otherwise $P$ is \emph{reduced} modulo $Q$. If $P$ is reducible, then we can reduce $P$ modulo $Q$ by $\p{t}$ to have $\tilde{P} := P - \frac{\lc(P)}{\lc(Q)} \frac{\p{t}}{\lt(Q)}Q$. This process is denoted by $ P\xrightarrow[\p{t}]{Q} \tilde{P}$. Again if $\tilde{P}$ is reducible modulo $Q$, we can further reduce $\tilde{P}$. We can repeat this kind of reduction until the result $\overline{P}$ is reduced, and this process is denoted by $ P \xrightarrow[*]{Q} \overline{P}$. Furthermore, let $\pset{Q} \subseteq \kx$ be a polynomial set. Similarly, $P$ is said to be \emph{reducible} modulo $\pset{Q}$ if $P$ is reducible modulo some $Q \in \pset{Q}$, otherwise $P$ is \emph{reduced}. We can reduce $P$ modulo $\pset{Q}$ repeatedly as long as the result is reducible until we have a reduced polynomial $\overline{P}$, and this process is denoted by $ P \xrightarrow[*]{\pset{Q}} \overline{P}$.

Reduction modulo a polynomial (set) is the most important operation in the construction of \grobner bases and in the computational manipulation of multivariate polynomial ideals like performing the ideal membership test with \grobner bases. In particular, from a minimal \grobner basis $\pset{G}$, one can replace each $G \in \pset{G}$ by $\overline{G}$ with the reduction $ G \xrightarrow[*]{\pset{G}\setminus \{G\}} \overline{G}$ to have the reduced \grobner basis defined below \cite[Chapter~2.7]{CLO1997I}.

\begin{definition}\label{def:reducedGB}
  Let $\pset{G}$ be a \grobner basis of an ideal $\ideal{I} \subseteq \fk[x_1, \ldots, x_n]$  w.r.t. a term order $<$. Then $\pset{G}$ is said to be \emph{reduced} if for each $G \in \pset{G}$, $G$ is reduced modulo $\pset{G} \setminus \{G\}$ and $\lc(G) = \pm 1$.
\end{definition}

By definition, it is clear that a reduced \grobner basis is also minimal. For an arbitrary polynomial ideal $\ideal{I} \subseteq  \fk[x_1, \ldots, x_n]$, its reduced \grobner basis w.r.t. a fixed term order is unique (in our slightly weaker definition, up to the signs of the leading coefficients). Reduced \grobner bases, due to their uniqueness, are the standard representation of polynomial ideals and they are heavily related to the W-characteristic sets described in the following.

\subsubsection{Triangular set and W-characteristic set}
\label{sec:ts}

Fix an order $x_1 < \cdots < x_n$ on all the variables of the polynomial ring $\fk[x_1, \ldots, x_n]$. For a polynomial $F \in \kx$, the greatest variable effectively appearing in $F$ is called its \emph{leading variable} and denoted by $\lv(F)$. Writing $F$ as a univariate polynomial in $\lv(F)$, its leading coefficient w.r.t. $\lv(F)$ is called the \emph{initial} of $F$ and denoted by $\ini(F)$.

\begin{definition}\rm\label{def:ts}
An ordered polynomial set $\pset{T} = [T_1, \ldots, T_r] \subseteq \fk[x_1, \ldots, x_n]$ is called a \emph{triangular set} if $\lv(T_1) < \cdots < \lv(T_r)$. Furthermore, $\pset{T}$ is said to be \emph{normal}  if for each $i \in [r]$, $\ini(T_i)$ does not involve any of $\lv(T_1), \ldots, \lv(T_r)$. 
\end{definition}

For a triangular set $\pset{T}$, its \emph{saturated ideal} $\sat(\pset{T})$ is defined to be the saturation of $\bases{\pset{T}}$ w.r.t. $I$, namely $\sat(\pset{T}) := \bases{\pset{T}}:I^{\infty}$, where $I = \prod_{T \in \pset{T}}\ini(T)$.

Consider the lexicographic term order $<_{\rm lex}$ induced by $x_1 < \cdots < x_n$. For an ideal $\ideal{I} \subseteq \fk[x_1, \ldots, x_n]$, let $\pset{G}$ be the reduced \grobner basis of $\ideal{I}$ w.r.t. $<_{\rm lex}$. Then the minimal triangular set contained in $\pset{G}$ can be extracted in the following way. For $i \in [n]$, choose from $\pset{G}_i := \{G\in \pset{G}: \lv(G) = x_i\}$ the minimal polynomial $C_i$ w.r.t. $<_{\rm lex}$ ($C_i$ is set to null if $\pset{G}_i = \emptyset$). Then the triangular set $\pset{C} = [C_1, \ldots, C_n]$ is called the \emph{W-characteristic set} of $\ideal{I}$. Note that $\pset{C}$ does not necessarily contain $n$ polynomials since $\pset{G}_i$ may be empty. From the uniqueness of the reduced \grobner basis, that of the W-characteristic set follows. W-characteristic sets serve as an underlying bridge connecting the theories of \grobner bases and triangular sets. 

For any polynomial ideal $\ideal{I} \subseteq \fk[x_1, \ldots, x_n]$, the pair $(\pset{G}, \pset{C})$ is called its \emph{characteristic pair}, where $\pset{G}$ and $\pset{C}$ are the reduced \grobner basis and W-characteristic set of $\ideal{I}$ respectively. When $\pset{C}$ is normal, we say that the characteristic pair is \emph{normal}; when $\bases{\pset{G}} = \sat(\pset{C})$, it is said to be \emph{strong}. Normality and being strong are two desired properties of characteristic pairs, for the former is easy to define yet still implies the fundamental regularity of the W-characteristic set and the latter provides two kinds of representations of the ideal in terms of the \grobner basis and triangular set simultaneously. Characteristic pairs are the underlying concept in the theory of characteristic decomposition developed in \cite{WDM20d}, which generalizes the traditional theory of triangular decomposition by combining the advantages of both \grobner bases and triangular sets. 

\subsection{Schubert determinantal ideal}
\label{sec:pre-schubert}

Let $k$ be a positive integer. For the permutations in the symmetric group $S_k$, we use the one-line notation to write them, namely $w=w_1 \cdots w_k$ denotes the permutation sending $i$ to $w_i$ for $i \in [k]$. For any $w = w_1\cdots w_k \in S_k$, its corresponding permutation matrix $w^T$ is the square matrix of size $k$ such that its $i$th row has 1 in the $w_i$-th column and 0 in all the other columns for $i \in [k]$.

Let $X$ be a generic matrix of size $m \times n$ with entries $x_{ij}$ for $i \in [m]$ and $j \in [n]$. Without loss of generality, we assume $m \leq n$. In this paper we work on determinantal ideals in the polynomial ring $\fk[\bm{x}]$ defined by minors of $X$ satisfying certain conditions, where $\bm{x}$ is the set of all the $mn$ variables $x_{11}, \ldots, x_{1n}, \ldots, x_{m1}, \ldots, x_{mn}$ appearing in $X$ as its entries. Let $M$ be an arbitrary matrix of size $m \times n$. Then for a pair $(p, q)$ of integers with $p \in [m]$ and $q \in [n]$, we use $M_{pq}$ to denote the submatrix of $M$ consisting its northwest $p\times q$ entries. 

For the Schubert determinantal ideals in this subsection, we work on permutations in $S_n$ for a fixed positive integer $n$ and the generic matrix $X$ of size $n \times n$.

\begin{definition}\rm \label{def:schubert}
  Let $w$ be a permutation in $S_n$. Then the \emph{Schubert determinantal ideal} of $w$ in the ring $\fk[\bm{x}]$, denoted by $I_w$, is the ideal generated by the minors of $X_{pq}$ of size $\rank(w^{T}_{pq})+1$ for all $p, q \in [n]$. 
\end{definition}

In \cite{FUL92F} it is shown that a smaller set of generators, called the \emph{Fulton generators} now in the literature, can be found as explained below. For an arbitrary permutation $w = w_1\cdots w_n \in S_n$, consider a square of $n \times n$ boxes and label them in the same way as entries of a square matrix. Then for each $i \in [n]$, remove the box at the position $(i, w_i)$ and all the boxes to the south and east of it. The remaining boxes form the \emph{Rothe diagram} $D(w)$ of $w$, and the \emph{essential set} of $w$ is defined as
$$\ess(w) := \{(p, q) \in D(w): \mbox{neither } (p, q+1) \mbox{ nor } (p+1, q) \mbox{ is in } D(w)\}.$$
Then instead of all $(p, q)$ in Definition~\ref{def:schubert}, it is proved that $I_w$ can be generated as an ideal by the minors of $X_{pq}$ of size $\rank(w^{T}_{pq})+1$ for all $(p, q) \in \ess(w)$.

A permutation $w \in S_n$ is said to be \emph{vexillary} (or 2143-avoiding) if there does not exist $1 \leq i < j < k < \ell \leq n$ such that $w_j < w_i < w_{\ell} < w_k$. It can be shown, see \cite[Remark~9.17]{FUL92F} for example, that a permutation $w$ is vexillary if and only if there are no $(p, q), (p', q') \in \ess(w)$ such that $p < p'$ and $q < q'$.

\begin{example}\rm \label{exp:ess}
  Consider the permutation $w=[10, 9, 2, 3, 8, 6, 5, 7, 4, 1] \in S_{10}$. Then the left subfigure in Figure~\ref{fig:EssExp} demonstrates which boxes are removed by lines to the south and east of $(i, w_i)$ (marked with $*$) and the essential set (marked with circles) in the Rothe diagram. In the right subfigure for each $(p,q) \in \ess(w)$, the rank $\rank(w^{T}_{pq})$, which is equal to the number of $*$ in the boxes to the north and west of $(p,q)$, is shown inside the circles. One can see that $w$ is vexillary either from $w$ itself or from the distribution of $\ess(w)$ . 
\end{example}

  \begin{figure}[h]
    \centering

    \begin{subfigure}{.48\textwidth}
      \centering
\begin{tikzpicture}
            \begin{scope}[scale =0.5]
                \draw[help lines,color=black] (0,0) grid (10,10);
                \node[black] at (9.5,9.5){$\ast$};
                \node[black] at (8.5,8.5){$\ast$};
                \node[black] at (1.5,7.5){$\ast$};
                \node[black] at (2.5,6.5){$\ast$};
                \node[black] at (7.5,5.5){$\ast$};
                \node[black] at (5.5,4.5){$\ast$};
                \node[black] at (4.5,3.5){$\ast$};
                \node[black] at (6.5,2.5){$\ast$};
                \node[black] at (3.5,1.5){$\ast$};
                \node[black] at (0.5,0.5){$\ast$};
                \draw[red](0.5,0)--(0.5,0.5)--(10,0.5);
                \draw[red](9.5,0)--(9.5,9.5)--(10,9.5);
                \draw[red](8.5,0)--(8.5,8.5)--(10,8.5);
                \draw[red](1.5,0)--(1.5,7.5)--(10,7.5);
                \draw[red](2.5,0)--(2.5,6.5)--(10,6.5);
                \draw[red](7.5,0)--(7.5,5.5)--(10,5.5);
                \draw[red](5.5,0)--(5.5,4.5)--(10,4.5);
                \draw[red](4.5,0)--(4.5,3.5)--(10,3.5);
                \draw[red](6.5,0)--(6.5,2.5)--(10,2.5);
                \draw[red](3.5,0)--(3.5,1.5)--(10,1.5);
                \draw[orange] (0.5,1.5) circle(0.45);
                \draw[orange] (3.5,2.5) circle(0.45);
                \draw[orange] (4.5,4.5) circle(0.45);
                \draw[orange] (6.5,5.5) circle(0.45);
                \draw[orange] (7.5,8.5) circle(0.45);
                \draw[orange] (8.5,9.5) circle(0.45);
            \end{scope}
        \end{tikzpicture}
      \end{subfigure}      
            \begin{subfigure}{.48\textwidth}
      \centering
        \begin{tikzpicture}
            \begin{scope}[scale =0.5]
                \draw[help lines,color=black] (0,0) grid (10,10);

                \foreach \x in {0,...,9}{
                    \filldraw[fill=gray!20] (\x,1) rectangle (\x +1,0);}
                \foreach \x in {3,...,9}{
                    \filldraw[fill=gray!20] (\x,2) rectangle (\x +1,1);}
                \foreach \x in {6,...,9}{
                    \filldraw[fill=gray!20] (\x,3) rectangle (\x +1,2);}
                \foreach \x in {4,...,9}{
                    \filldraw[fill=gray!20] (\x,4) rectangle (\x +1,3);}
                \foreach \x in {5,...,9}{
                    \filldraw[fill=gray!20] (\x,5) rectangle (\x +1,4);}
                \foreach \x in {7,...,9}{
                    \filldraw[fill=gray!20] (\x,6) rectangle (\x +1,5);}
                \foreach \x in {2,...,9}{
                    \filldraw[fill=gray!20] (\x,7) rectangle (\x +1,6);}
                \foreach \x in {1,...,9}{
                    \filldraw[fill=gray!20] (\x,8) rectangle (\x +1,7);}
                \foreach \x in {8,...,9}{
                    \filldraw[fill=gray!20] (\x,9) rectangle (\x +1,8);}
                \foreach \x in {1,...,7}{
                    \filldraw[fill=gray!20] (1,\x) rectangle (2,\x+1);}
                \foreach \x in {1,...,6}{
                    \filldraw[fill=gray!20] (2,\x) rectangle (3,\x+1);}
                \foreach \x in {1,...,3}{
                    \filldraw[fill=gray!20] (4,\x) rectangle (5,\x+1);}
                \foreach \x in {1,...,4}{
                    \filldraw[fill=gray!20] (5,\x) rectangle (6,\x+1);}
                \filldraw[fill=gray!20] (9,9) rectangle (10,10);
                
                \node[black] at (9.5,9.5){$\ast$};
                \node[black] at (8.5,8.5){$\ast$};
                \node[black] at (1.5,7.5){$\ast$};
                \node[black] at (2.5,6.5){$\ast$};
                \node[black] at (7.5,5.5){$\ast$};
                \node[black] at (5.5,4.5){$\ast$};
                \node[black] at (4.5,3.5){$\ast$};
                \node[black] at (6.5,2.5){$\ast$};
                \node[black] at (3.5,1.5){$\ast$};
                \node[black] at (0.5,0.5){$\ast$};

                \draw[orange] (0.5,1.5) circle(0.45) node[font=\fontsize{6}{6}\selectfont,black] {0};
                \draw[orange] (3.5,2.5) circle(0.45) node[font=\fontsize{6}{6}\selectfont,black] {2};
                \draw[orange] (4.5,4.5) circle(0.45) node[font=\fontsize{6}{6}\selectfont,black] {2};
                \draw[orange] (6.5,5.5) circle(0.45) node[font=\fontsize{6}{6}\selectfont,black] {2};
                \draw[orange] (7.5,8.5) circle(0.45) node[font=\fontsize{6}{6}\selectfont,black] {0};
                \draw[orange] (8.5,9.5) circle(0.45) node[font=\fontsize{6}{6}\selectfont,black] {0};
            \end{scope}
          \end{tikzpicture}
        \end{subfigure}
        \caption{An illustrative example of the Rothe diagram and essential set of the permutation $w=[10, 9, 2, 3, 8, 6, 5, 7, 4, 1]$}\label{fig:EssExp}
\end{figure}
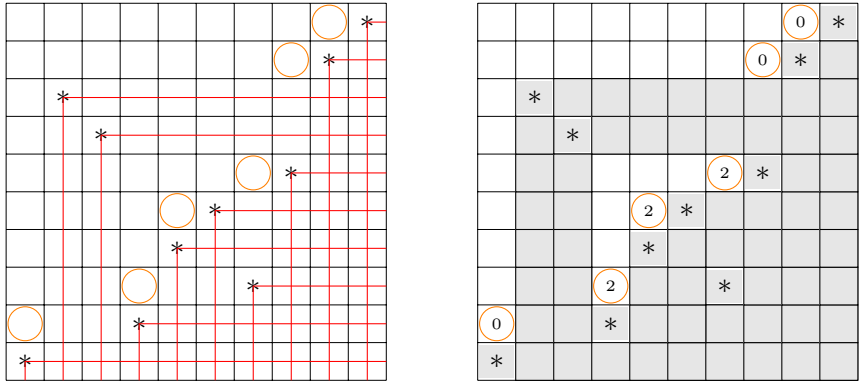

For an arbitrary Schubert determinantal ideal in $\kx$, all of its generators defined above are minors of $X$ and each generator is a homogeneous polynomial in $\kx$ of degree $k$ if it is (the expansion of) the determinant of a $k\times k$ submatrix. For determinantal ideals like the Schubert ones, one is particularly interested in two kinds of term orders related to determinants. A term order in $\kx$ is said to be \emph{(anti-)diagonal} if the leading term of any minor of $X$ is the product of all the (anti-)diagonal elements of the corresponding submatrix. The readers are referred to \cite[Chapter 16.4]{MS05C} for examples of (anti-)diagonal term orders in $\kx$. For the (anti-)diagonal term orders, the \grobner bases of Schubert determinantal ideals are known. 

\begin{theorem}[{\cite{KM05g,KMY09g}}]\label{thm:gb-schubert}
  Let $w$ be a permutation in $S_n$. Then the following statements hold.
  \begin{enumerate}
  \item[\rm (1)] Fulton generators form a \grobner basis of $I_w$ w.r.t. any anti-diagonal term order.
  \item[\rm (2)] Fulton generators form a \grobner basis of $I_w$ w.r.t. any diagonal term order if and only if $w$ is vexillary. 
  \end{enumerate}
\end{theorem}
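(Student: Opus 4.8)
The plan is to deduce both parts from a single principle. Write $J_w\subseteq\kx$ for the monomial ideal generated by the leading terms, with respect to the fixed (anti-)diagonal term order $<$, of the Fulton generators of $I_w$. Since every Fulton generator belongs to $I_w$, one always has $J_w\subseteq\mathrm{in}_<(I_w)$, so $\kx/J_w$ surjects onto $\kx/\mathrm{in}_<(I_w)$, a graded quotient whose (multigraded) Hilbert function agrees with that of $\kx/I_w$. Hence the Fulton generators form a \grobner basis with respect to $<$ exactly when $\kx/J_w$ and $\kx/I_w$ have the same Hilbert function: a surjection of multigraded rings that is also a Hilbert-function equality must be an isomorphism. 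So I would establish this equality in the setting of part~(1) and of the ``if'' half of part~(2), and violate it to prove the ``only if'' half of part~(2).

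For part~(1), I would follow the Knutson--Miller route. First identify $J_w$ with the Stanley--Reisner ideal of a simplicial complex $\Delta_w$ on the set of box positions $(i,j)$, and recognize $\Delta_w$ as a \emph{subword complex} of an explicit word in the simple transpositions whose Demazure product is $w$. Subword complexes are vertex-decomposable, hence shellable, hence Cohen--Macaulay, and shelling exhibits $\kx/J_w$ as Cohen--Macaulay of the same Krull dimension as $\kx/I_w$; moreover the facets of $\Delta_w$ biject with the reduced pipe dreams of $w$, so the numerator (the $K$-polynomial) of the Hilbert series of $\kx/J_w$ is the pipe-dream sum, which is the Billey--Jockusch--Stanley / Fomin--Kirillov formula for the double Schubert polynomial $\mathfrak{S}_w(\p{x};\p{y})$. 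On the other side, Fulton's theorem that matrix Schubert varieties are Cohen--Macaulay, together with the identification of the multidegree of $\kx/I_w$ with $\mathfrak{S}_w$, pins the numerator of the Hilbert series of $\kx/I_w$ to be the same polynomial. Matching numerators and (common) denominators forces the surjection $\kx/J_w\twoheadrightarrow\kx/\mathrm{in}_<(I_w)$ to be an isomorphism, i.e.\ $J_w=\mathrm{in}_<(I_w)$.

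For part~(2), the ``if'' direction specializes the same machinery once one observes that a vexillary $w$ has its essential set lying on a chain, so that $I_w$ coincides, after transposing and relabelling, with a one-sided ladder determinantal ideal; its diagonal initial ideal is then the Stanley--Reisner ideal of a shellable ``flagged tableau'' complex whose $K$-polynomial is a flagged Schur polynomial and, since $w$ is vexillary, equals $\mathfrak{S}_w$, so the Hilbert-function comparison goes through as before (the classical \grobner-basis theory of ladder determinantal ideals can alternatively be invoked directly). For the ``only if'' direction I would argue by contraposition: if $w$ is not vexillary there are $(p,q),(p',q')\in\ess(w)$ with $p<p'$ and $q<q'$, and from the two associated families of Fulton minors one manufactures an explicit element of $I_w$ --- an $S$-polynomial reduction of a minor of $X_{p'q'}$ against minors supported in $X_{pq}$ --- whose diagonal leading term is divisible by no diagonal leading term of a Fulton generator. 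The prototype is $w=2143$, whose only Fulton generators are $x_{11}$ and $\det X_{33}$: reducing $\det X_{33}$ modulo $x_{11}$ leaves a nonzero homogeneous cubic all of whose monomials avoid $x_{11}$, so its diagonal leading term is a genuinely new minimal generator of $\mathrm{in}_<(I_w)$, and $J_w\subsetneq\mathrm{in}_<(I_w)$.

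The main obstacle, in both the anti-diagonal and the vexillary-diagonal cases, is the Hilbert-series bookkeeping: one must (a) prove shellability of the relevant subword or flagged-tableau complexes and compute their $K$-polynomials as explicit pipe-dream or tableau sums, and (b) independently identify those sums with (double) Schubert polynomials and with multidegrees of matrix Schubert varieties --- the last being Knutson--Miller's ``Schubert polynomials are multidegrees'' theorem, whose proof runs through the divided-difference recursion and a geometric vertex decomposition of the matrix Schubert variety. By comparison, the remaining ingredients --- that subword complexes are shellable, that their facets biject with reduced pipe dreams, that a non-vexillary permutation always supplies the required incomparable pair in $\ess(w)$, and the final small computation exhibiting the obstruction monomial in the diagonal non-vexillary case --- are comparatively routine.
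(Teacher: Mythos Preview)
The paper does not supply its own proof of this theorem: it is quoted as a background result from \cite{KM05g,KMY09g} and used as a black box thereafter. Your outline is a faithful high-level summary of the arguments in those cited references---subword complexes, pipe-dream combinatorics, and the ``Schubert polynomials are multidegrees'' identification for part~(1); the flagged-tableau/ladder route for the vexillary diagonal case in part~(2); and the $2143$-pattern obstruction for the non-vexillary failure---so there is nothing in the present paper to compare your proposal against.
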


Besides identification of Fulton generators as the \grobner basis of $I_w$ in Theorem~\ref{thm:gb-schubert}, for the anti-diagonal term order deep underlying relations between the initial ideal $\lt(I_w)$, the Stanley-Reisner simplicial complex, the double Schubert polynomial of $w$, and all the reduced pipe dreams of $w$ are also established in \cite{KM05g}.

\subsection{Elusive minors as minimal \grobner bases}
\label{sec:elusive}

Theorem~\ref{thm:gb-schubert} states that Fulton generators form \grobner bases of Schubert determinantal ideals, but in general they are not minimal. In \cite{GY22M}, the concept of elusive minors is introduced to study the minimality of Fulton generators, and next we summarize the underlying ideas of the method, which will also shed light on our study on reduced \grobner bases. 

Let $w \in S_n$ be a permutation. Then the Fulton generators $\pset{F}$ of $I_w$ consist of all the minors of $X_{pq}$ of size $\rank(w^{T}_{pq})+1$ for all $(p, q) \in \ess(w)$. To study the minimal \grobner basis of $I_w$, by definition we need to investigate whether there exist two distinct minors $\p{m}_1, \p{m}_2 \in \pset{F}$ such that $\lt(\p{m}_1) | \lt(\p{m}_2)$, where the term order is either an anti-diagonal one or a diagonal one. Clearly if $\p{m}_1$ and $\p{m}_2$ belong to the same $(p, q) \in \ess(w)$, then they are the determinants of different submatrices of $X$ of the same size, and thus in this case the divisibility $\lt(\p{m}_1) | \lt(\p{m}_2)$ never occurs. Therefore it suffices to study the divisibility between minors from distinct $(p,q), (\tilde{p}, \tilde{q}) \in \ess(w)$.

Fix a minor $\p{m}$ from $(p, q) \in \ess(w)$ and another $(\tilde{p}, \tilde{q}) \in \ess(w)$, we want to test whether there exists a minor $\tilde{\p{m}}$ from  $(\tilde{p}, \tilde{q}) \in \ess(w)$  such that $\lt(\tilde{\p{m}}) | \lt(\p{m})$. It turns out that this can be verified by counting the number of full rows or columns of $\p{m}$ that lie in the submatrix $X_{\tilde{p}\tilde{q}}$. Denote by $R(\p{m})$ and $C(\p{m})$ respectively the sets of row and column indices of $\p{m}$. They are subsets of distinct integers in $[n]$ with the same cardinality. Then any minor $\p{m}$ of $X$ can be uniquely represented by $(R(\p{m}), C(\p{m}))$.

\begin{definition}\rm \label{def:attend}
Let $w \in S_n$ be a permutation, $\p{m}$ be a minor of size $\rank(w^{T}_{pq})$ $+1$ for some $(p, q) \in \ess(w)$, and $(\tilde{p}, \tilde{q}) \in \ess(w)$ be distinct from $(p, q)$. Then $\p{m}$ is said to \emph{attend} the submatrix $X_{\tilde{p}\tilde{q}}$ if (1) $\#(R(\p{m}) \cap [\tilde{p}]) \geq \rank(w^T_{\tilde{p}\tilde{q}})+1$ and $\#(C(\p{m})\cap [\tilde{q}]) = \rank(w^{T}_{pq})+1$, or (2) $\#(R(\p{m})\cap [\tilde{p}]) = \rank(w^{T}_{pq})+1$ and $\#(C(\p{m}) \cap [\tilde{q}]) \geq \rank(w^T_{\tilde{p}\tilde{q}})+1$. 
\end{definition}

Note that condition~(1) in Definition~\ref{def:attend} above means that the intersection of $\p{m}$ and the submatrix $X_{\tilde{p}\tilde{q}}$ contains at least $\rank(w^T_{\tilde{p}\tilde{q}})+1$ full rows of $\p{m}$. Since minors chosen from $X_{\tilde{p}\tilde{q}}$ for $(\tilde{p}, \tilde{q}) \in \ess(w)$ are of size $\rank(w^T_{\tilde{p}\tilde{q}})+1$, it is obvious that in this case there exists a minor $\tilde{\p{m}}$ from $X_{\tilde{p}\tilde{q}}$ such that $\lt(\tilde{\p{m}}) | \lt(\p{m})$. Similarly, condition~(2) is for the case of columns.

\begin{example}\rm \label{exp:attend}
Let us continue with the permutation $w$ in Example~\ref{exp:ess}. Choose $(5, 7), (2, 8) \in \ess(w)$, and the outlines of $X_{57}$ and $X_{28}$ are drawn with red and blue lines respectively in Figure~\ref{fig:attend}. In the left subfigure, the shaded minor $\p{m} = (\{2,3,5\}, \{2,4,6\})$ from $X_{57}$ is of size $3 = \rank(w_{57}^T)+1$ and thus is a Fulton generator of $I_w$. It is clear that $\p{m}$ intersects $X_{28}$ with $1$ full row and thus $\p{m}$ attends $X_{28}$. Then $\tilde{\p{m}}_a = x_{26}$ divides $\lt(\p{m}) = x_{26}x_{34}x_{52}$ for the anti-diagonal term order and $\tilde{\p{m}}_d = x_{22}$ divides $\lt(\p{m}) = x_{22}x_{34}x_{56}$ for the diagonal term order. In the right subfigure, the shaded minor $(\{3,4,5\}, \{3,5,6\})$ does not attend $X_{28}$, and its leading term w.r.t. either an anti-diagonal or a diagonal term order is not divisible by any minor of size $1$ in $X_{28}$. 
\end{example}

  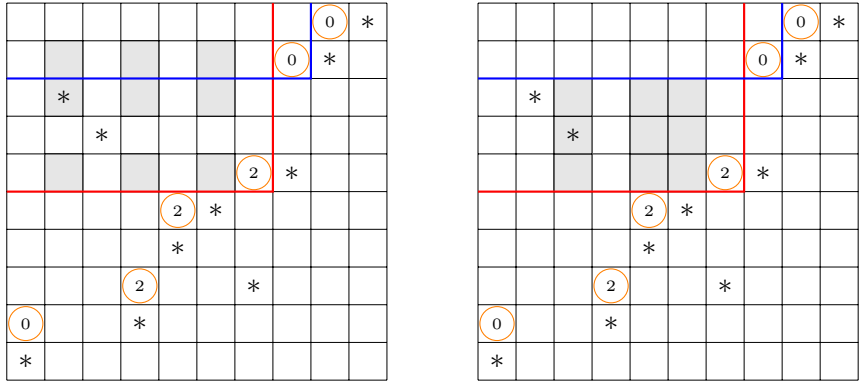
\begin{figure}[h]
    \centering

    \begin{subfigure}[b]{.48\textwidth}
      \centering
\begin{tikzpicture}
       \begin{scope}[scale =0.5]
            \draw[help lines,color=black] (0,0) grid (10,10);
            \filldraw[fill=gray!20] (1,6) rectangle (2,5);
            \filldraw[fill=gray!20] (1,8) rectangle (2,7);
            \filldraw[fill=gray!20] (1,9) rectangle (2,8);
            \filldraw[fill=gray!20] (3,6) rectangle (4,5);
            \filldraw[fill=gray!20] (3,8) rectangle (4,7);
            \filldraw[fill=gray!20] (3,9) rectangle (4,8);
            \filldraw[fill=gray!20] (5,6) rectangle (6,5);
            \filldraw[fill=gray!20] (5,8) rectangle (6,7);
            \filldraw[fill=gray!20] (5,9) rectangle (6,8);

            \node[black] at (9.5,9.5){$\ast$};
            \node[black] at (8.5,8.5){$\ast$};
            \node[black] at (1.5,7.5){$\ast$};
            \node[black] at (2.5,6.5){$\ast$};
            \node[black] at (7.5,5.5){$\ast$};
            \node[black] at (5.5,4.5){$\ast$};
            \node[black] at (4.5,3.5){$\ast$};
            \node[black] at (6.5,2.5){$\ast$};
            \node[black] at (3.5,1.5){$\ast$};
            \node[black] at (0.5,0.5){$\ast$};
            \draw[orange] (0.5,1.5) circle(0.45) node[font=\fontsize{6}{6}\selectfont,black] {0};
            \draw[orange] (3.5,2.5) circle(0.45) node[font=\fontsize{6}{6}\selectfont,black] {2};
            \draw[orange] (4.5,4.5) circle(0.45) node[font=\fontsize{6}{6}\selectfont,black] {2};
            \draw[orange] (6.5,5.5) circle(0.45) node[font=\fontsize{6}{6}\selectfont,black] {2};
            \draw[orange] (7.5,8.5) circle(0.45) node[font=\fontsize{6}{6}\selectfont,black] {0};
            \draw[orange] (8.5,9.5) circle(0.45) node[font=\fontsize{6}{6}\selectfont,black] {0};
            \draw[thick, red](0,5)--(7,5)--(7,10);
            \draw[thick, blue](0,8)--(8,8)--(8,10);

        \end{scope}
      \end{tikzpicture}
    \end{subfigure}
            \begin{subfigure}{.48\textwidth}
              \centering
              \begin{tikzpicture}
        \begin{scope}[scale =0.5]
            \draw[help lines,color=black] (0,0) grid (10,10);

            \filldraw[fill=gray!20] (2,6) rectangle (3,5);
            \filldraw[fill=gray!20] (2,7) rectangle (3,6);
            \filldraw[fill=gray!20] (2,8) rectangle (3,7);
            \filldraw[fill=gray!20] (4,6) rectangle (5,5);
            \filldraw[fill=gray!20] (4,7) rectangle (5,6);
            \filldraw[fill=gray!20] (4,8) rectangle (5,7);
            \filldraw[fill=gray!20] (5,6) rectangle (6,5);
            \filldraw[fill=gray!20] (5,7) rectangle (6,6);
            \filldraw[fill=gray!20] (5,8) rectangle (6,7);

            \node[black] at (9.5,9.5){$\ast$};
            \node[black] at (8.5,8.5){$\ast$};
            \node[black] at (1.5,7.5){$\ast$};
            \node[black] at (2.5,6.5){$\ast$};
            \node[black] at (7.5,5.5){$\ast$};
            \node[black] at (5.5,4.5){$\ast$};
            \node[black] at (4.5,3.5){$\ast$};
            \node[black] at (6.5,2.5){$\ast$};
            \node[black] at (3.5,1.5){$\ast$};
            \node[black] at (0.5,0.5){$\ast$};
            \draw[orange] (0.5,1.5) circle(0.45) node[font=\fontsize{6}{6}\selectfont,black] {0};
            \draw[orange] (3.5,2.5) circle(0.45) node[font=\fontsize{6}{6}\selectfont,black] {2};
            \draw[orange] (4.5,4.5) circle(0.45) node[font=\fontsize{6}{6}\selectfont,black] {2};
            \draw[orange] (6.5,5.5) circle(0.45) node[font=\fontsize{6}{6}\selectfont,black] {2};
            \draw[orange] (7.5,8.5) circle(0.45) node[font=\fontsize{6}{6}\selectfont,black] {0};
            \draw[orange] (8.5,9.5) circle(0.45) node[font=\fontsize{6}{6}\selectfont,black] {0};
            \draw[thick, red](0,5)--(7,5)--(7,10);
            \draw[thick, blue](0,8)--(8,8)--(8,10);
        \end{scope}
      \end{tikzpicture}
    \end{subfigure}
    
        \caption{An illustrative example of a Fulton generator attending a submatrix} \label{fig:attend}
\end{figure}
      
For the easier case as in Example~\ref{exp:attend} when $p > \tilde{p}$ and $q > \tilde{q}$ do not hold simultaneously for $(p, q), (\tilde{p}, \tilde{q}) \in \ess(w)$, divisibility of $\lt(\p{m})$ for a Fulton generator $\p{m}$ from $X_{pq}$ by the leading term of some minor from $X_{\tilde{p}\tilde{q}}$ is fully characterized by whether $\p{m}$ attends $X_{\tilde{p}\tilde{q}}$ or not. In \cite{GY22M} it is proved that for the anti-diagonal term order, the above characterization also holds for the trickier case when $p > \tilde{p}$ and $q > \tilde{q}$ do hold (for which the intersection does not necessarily consists of \emph{full} rows or columns).

\begin{proposition}[{\cite{GY22M}}]\label{prop:attend}
  Let $w \in S_n$ be a permutation, $\p{m}$ be a Fulton generator for $(p, q) \in \ess(w)$, and $(\tilde{p}, \tilde{q})$ be another pair in $\ess(w)$.
  \begin{enumerate}
  \item[\rm (1)] For any anti-diagonal term order, there exists a Fulton generator $\tilde{\p{m}}$ for $(\tilde{p}, \tilde{q})$ such that $\lt(\tilde{\p{m}}) | \lt(\p{m})$ if and only if $\p{m}$ attends $X_{\tilde{p}\tilde{q}}$.
  \item[\rm (2)] If $w$ is vexillary, then for any diagonal term order, the same as in {\rm (1)} also holds. 
  \end{enumerate}
\end{proposition}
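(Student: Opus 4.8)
The plan is to reduce the divisibility $\lt(\tilde{\p{m}})\mid\lt(\p{m})$ to a combinatorial comparison of row and column index sets. Write $\p{m}=(R,C)$ with $R=\{i_1<\dots<i_{r+1}\}\subseteq[p]$ and $C=\{j_1<\dots<j_{r+1}\}\subseteq[q]$, where $r=\rank(w^{T}_{pq})$, and set $\tilde r=\rank(w^{T}_{\tilde p\tilde q})$. The observation driving everything is that $\lt(\p{m})$ is squarefree and its variables are exactly those sitting on the (anti-)diagonal of the submatrix cut out by $\p{m}$: in positions $(i_k,j_{r+2-k})$, $k\in[r+1]$, for an anti-diagonal order, and in positions $(i_k,j_k)$ for a diagonal order. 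Since each row and each column of a square submatrix meets its (anti-)diagonal exactly once, $\lt(\tilde{\p{m}})\mid\lt(\p{m})$ is equivalent to the cell set underlying $\lt(\tilde{\p{m}})$ being contained in the one underlying $\lt(\p{m})$.

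For the ``if'' direction I would exhibit $\tilde{\p{m}}$ directly. In case~(1) of Definition~\ref{def:attend}, where $\#(R\cap[\tilde p])\ge\tilde r+1$ and $C\subseteq[\tilde q]$, choose $\tilde R\subseteq R$ and $\tilde C\subseteq C$ of size $\tilde r+1$ appropriately---for an anti-diagonal order the $\tilde r+1$ smallest elements of $R$ together with the $\tilde r+1$ largest of $C$, for a diagonal order the $\tilde r+1$ smallest of each---so that one checks at once that the (anti-)diagonal cells of $\tilde{\p{m}}=(\tilde R,\tilde C)$ lie among those of $\p{m}$; then $\lt(\tilde{\p{m}})\mid\lt(\p{m})$, and $\tilde{\p{m}}$ is a size-$(\tilde r+1)$ minor of $X_{\tilde p\tilde q}$, i.e.\ a Fulton generator for $(\tilde p,\tilde q)$. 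Case~(2) is symmetric. This direction needs no hypothesis on $w$ and works for both types of term orders.

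For the ``only if'' direction, squarefreeness of $\lt(\p{m})$ together with $\lt(\tilde{\p{m}})\mid\lt(\p{m})$ forces $R(\tilde{\p{m}})\subseteq R$ and $C(\tilde{\p{m}})\subseteq C$, whence $\#(R\cap[\tilde p])\ge\tilde r+1$ and $\#(C\cap[\tilde q])\ge\tilde r+1$. If $p\le\tilde p$, then $R\subseteq[p]\subseteq[\tilde p]$ yields $\#(R\cap[\tilde p])=r+1$, which with the column count is precisely case~(2) of Definition~\ref{def:attend}; the case $q\le\tilde q$ gives case~(1). This covers every configuration except $p>\tilde p$ \emph{and} $q>\tilde q$ simultaneously. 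For part~(2), the characterization of vexillary $w$ through $\ess(w)$ recalled above says exactly that this configuration (and likewise $p<\tilde p$, $q<\tilde q$) cannot occur, so part~(2) reduces to the cases just settled.

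The remaining situation---part~(1) with $p>\tilde p$, $q>\tilde q$, and an anti-diagonal order---is the heart of the argument and the step I expect to be the main obstacle, since here the overlap of $\p{m}$ with $X_{\tilde p\tilde q}$ need not contain any full row or column of $\p{m}$. I would count which anti-diagonal cells of $\p{m}$ survive inside $X_{\tilde p\tilde q}$: with $\alpha=\#(R\cap[\tilde p])$ and $\beta=\#(C\cap[\tilde q])$, the cell $(i_k,j_{r+2-k})$ lies in $X_{\tilde p\tilde q}$ exactly when $r+2-\beta\le k\le\alpha$, so there are exactly $\alpha+\beta-r-1$ of them, they form a descending staircase, and every sub-collection of $\tilde r+1$ of them is the anti-diagonal of a genuine size-$(\tilde r+1)$ minor of $X_{\tilde p\tilde q}$. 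Hence a dividing Fulton generator for $(\tilde p,\tilde q)$ exists if and only if $\alpha+\beta-r-1\ge\tilde r+1$, and the remaining work is to recognize this inequality as exactly the condition that $\p{m}$ attends $X_{\tilde p\tilde q}$---it specializes to case~(1) when $C\subseteq[\tilde q]$, so $\beta=r+1$, and to case~(2) when $R\subseteq[\tilde p]$, so $\alpha=r+1$. By contrast, for a diagonal order the surviving diagonal cells number $\min(\alpha,\beta)$ instead, and the corresponding equivalence with attending can fail once $\ess(w)$ contains a pair with $\tilde p<p$ and $\tilde q<q$; this is the structural reason the diagonal statement in part~(2) must be confined to vexillary permutations.
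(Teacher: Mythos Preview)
The paper does not supply its own proof of this proposition: it is quoted from \cite{GY22M} and used as input for Theorem~\ref{thm:minimal}. So there is no in-paper argument to compare against; I will simply assess your proposal.

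Your treatment of the ``if'' direction, of part~(2), and of the non-nested configurations in part~(1) is clean and correct. The problem lies in the last paragraph, the ``only if'' direction for the nested case $p>\tilde p$, $q>\tilde q$ under an anti-diagonal order. You correctly compute that the number of anti-diagonal cells of $\p{m}$ landing in $X_{\tilde p\tilde q}$ is $\alpha+\beta-r-1$, and hence that divisibility is equivalent to
\[
\alpha+\beta \;\ge\; r+\tilde r+2.
\]
You then assert that this ``is exactly the condition that $\p{m}$ attends $X_{\tilde p\tilde q}$.'' But Definition~\ref{def:attend} requires one of $\alpha,\beta$ to equal $r+1$ (full rows or full columns in the overlap), whereas your inequality is strictly weaker as a set-theoretic condition: for instance $r=3$, $\tilde r=0$, $\alpha=3$, $\beta=2$ satisfies $\alpha+\beta=r+\tilde r+2$ while neither $\alpha$ nor $\beta$ equals $r+1$. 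Your ``it specializes to case~(1) when $\beta=r+1$ and to case~(2) when $\alpha=r+1$'' only shows that attending implies the inequality, not the converse.

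Closing this gap is exactly what makes the nested case ``trickier'' (as the paper itself flags just before Proposition~\ref{prop:attend}): one must invoke constraints on the rank function coming from $(p,q),(\tilde p,\tilde q)\in\ess(w)$ to rule out such parameter choices, or else argue at the level of the whole essential set rather than blockwise. Purely combinatorial bookkeeping on $\alpha,\beta,r,\tilde r$ is not enough; the argument in \cite{GY22M} uses the permutation structure here, and your proposal needs to as well.
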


A Fulton generator of $I_w$ for some $(p, q) \in \ess(w)$ is said to be \emph{elusive} if it does not attend any $X_{\tilde{p}\tilde{q}}$ for each $(\tilde{p}, \tilde{q}) \in \ess(w)\setminus \{(p, q)\}$ such that $\rank(w_{\tilde{p}\tilde{q}}^T) < \rank(w_{pq}^T)$. Combining Proposition~\ref{prop:attend} with Theorem~\ref{thm:gb-schubert}, one immediately gets the following descriptions about the minimal \grobner bases of Schubert determinantal ideals.

\begin{theorem}[{\cite[Corollary~1.8]{GY22M}}]\label{thm:minimal}
  Let $w$ be a permutation in $S_n$. Then the following statements hold.
  \begin{enumerate}
  \item[\rm (1)] For any anti-diagonal term order, all the elusive minors form a minimal \grobner basis of $I_w$.
  \item[\rm (2)] If $w$ is vexillary, then for any diagonal term order, the same as in {\rm (1)} also holds. 
  \end{enumerate}
\end{theorem}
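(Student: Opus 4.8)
The plan is to obtain the statement as a formal consequence of Theorem~\ref{thm:gb-schubert} and Proposition~\ref{prop:attend}, which already contain all the substantive algebra; what is left is a bookkeeping argument plus one elementary remark about leading terms of minors under an (anti-)diagonal term order. That remark is: if $\p{m}$ is a minor of $X$, then $\lt(\p{m})$ determines the pair $(R(\p{m}), C(\p{m}))$ — the row indices of $\p{m}$ are exactly the first indices of the variables occurring in $\lt(\p{m})$, and the column indices are the second indices — regardless of whether the order is diagonal or anti-diagonal. Consequently, two distinct minors of the \emph{same} size have leading terms neither of which divides the other, and more generally $\lt(\p{m}') \mid \lt(\p{m})$ together with $\deg \p{m}' \geq \deg \p{m}$ forces $\p{m}' = \p{m}$.

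First I would show that the set $\pset{E}$ of elusive minors is a \grobner basis of $I_w$. By Theorem~\ref{thm:gb-schubert} the full set $\pset{F}$ of Fulton generators is a \grobner basis (part (1) unconditionally for the anti-diagonal case, part (2) under the vexillary hypothesis for the diagonal case); since $\pset{E} \subseteq \pset{F} \subseteq I_w$, the subset $\pset{E}$ is a \grobner basis iff $\bases{\lt(\pset{E})} = \bases{\lt(\pset{F})}$, i.e.\ iff every $\p{m} \in \pset{F}$ has $\lt(\p{m})$ divisible by $\lt(\p{m}^{*})$ for some $\p{m}^{*} \in \pset{E}$. Fix $\p{m} \in \pset{F}$ attached to $(p,q) \in \ess(w)$. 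If $\p{m}$ is elusive we are done; otherwise, by definition $\p{m}$ attends some $X_{\tilde p\tilde q}$ with $\rank(w^{T}_{\tilde p\tilde q}) < \rank(w^{T}_{pq})$, so by Proposition~\ref{prop:attend} (part (1) in the anti-diagonal case, part (2) when $w$ is vexillary) there is a Fulton generator $\p{m}'$ for $(\tilde p,\tilde q)$ with $\lt(\p{m}') \mid \lt(\p{m})$ and $\deg \p{m}' = \rank(w^{T}_{\tilde p\tilde q})+1 < \rank(w^{T}_{pq})+1 = \deg \p{m}$. Iterating with $\p{m}'$ in place of $\p{m}$, the degree drops strictly at each non-elusive step, so the process terminates at an elusive minor $\p{m}^{*}$; transitivity of divisibility gives $\lt(\p{m}^{*}) \mid \lt(\p{m})$, as required.

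Next I would verify minimality. The leading coefficients of all elements of $\pset{E}$ are $\pm 1$ since they are minors of the generic matrix and the order is (anti-)diagonal, so it remains to rule out $\lt(\p{m}') \mid \lt(\p{m})$ for distinct elusive minors $\p{m}, \p{m}'$. Suppose such a pair existed, with $\p{m}$ attached to $(p,q)$ and $\p{m}'$ to $(\tilde p,\tilde q)$. If $(\tilde p,\tilde q) = (p,q)$, then $\p{m}$ and $\p{m}'$ are distinct minors of the same size, contradicting the remark of the first paragraph. If $(\tilde p,\tilde q) \neq (p,q)$, then $\p{m}'$ witnesses, via Proposition~\ref{prop:attend}, that $\p{m}$ attends $X_{\tilde p\tilde q}$; the subcase $\rank(w^{T}_{\tilde p\tilde q}) < \rank(w^{T}_{pq})$ contradicts the elusiveness of $\p{m}$, while the subcase $\rank(w^{T}_{\tilde p\tilde q}) \geq \rank(w^{T}_{pq})$ gives $\deg \p{m}' \geq \deg \p{m}$, which with $\lt(\p{m}') \mid \lt(\p{m})$ forces $\p{m}' = \p{m}$, again a contradiction. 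Both parts of the theorem then follow uniformly, the diagonal case differing only in that it invokes the vexillary versions of Theorem~\ref{thm:gb-schubert} and Proposition~\ref{prop:attend}.

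The only point needing real care — hence the main (and rather mild) obstacle — is the descent in the second paragraph: one must confirm that attending a \emph{strictly} smaller-rank box genuinely lowers the degree, so that the iteration is well-founded, and that the elusive minor it lands on still divides the original leading term. Everything deeper is already encapsulated in Theorem~\ref{thm:gb-schubert} and Proposition~\ref{prop:attend}.
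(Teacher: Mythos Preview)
Your proposal is correct and follows exactly the route the paper indicates: the paper states this theorem as an immediate corollary of Theorem~\ref{thm:gb-schubert} and Proposition~\ref{prop:attend} without giving a separate proof, and your argument simply spells out the bookkeeping that makes that combination work. The descent argument and the observation that $\lt(\p{m})$ determines $(R(\p{m}),C(\p{m}))$ are precisely the elementary remarks needed to fill in the details.
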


The following proposition about elusive minors is useful in our study on the reduced \grobner basis of $I_w$ in Section~\ref{sec:reduced}. Clearly the minor specified in this proposition is of size $\rank(w^T_{pq})+1$ and $(p, q)$ is its southeast corner.

\begin{proposition}[{\cite[Proposition~2.2]{GY22M}}]\label{prop:corner}
  Let $(p, q)$ be a pair in $D(w)$ for a permutation $w \in S_n$ and $r = \rank(w^T_{pq})$. Then the minor $([p-r, p], [q-r, q])$ is an elusive one of $I_w$. 
\end{proposition}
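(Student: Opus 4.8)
The plan is to verify the definition of an elusive minor directly and combinatorially, without invoking any term order. Write $\p{m} := ([p-r,p],[q-r,q])$. By hypothesis $\p{m}$ is a minor of $X_{pq}$ of size $r+1 = \rank(w^T_{pq})+1$, hence a generator of $I_w$ with southeast corner $(p,q)$, and what must be shown is that $\p{m}$ attends no submatrix $X_{\tilde p\tilde q}$ with $(\tilde p,\tilde q)\in\ess(w)\setminus\{(p,q)\}$ and $\rank(w^T_{\tilde p\tilde q})<r$. First I would extract the two inequalities that $(p,q)\in D(w)$ encodes, namely $w_p>q$ and $w^{-1}(q)>p$. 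These immediately give $r = \#\{i\in[p]: w_i\le q\}<p$ and, dually, $r<q$, so $p-r\ge 1$ and $q-r\ge 1$: the row set $R(\p{m})=[p-r,p]$ and column set $C(\p{m})=[q-r,q]$ are genuine intervals of $r+1$ consecutive integers in $[n]$. These same two inequalities will power the contradiction in the crucial case.

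Second, I would fix such a $(\tilde p,\tilde q)$, assume $\p{m}$ attends $X_{\tilde p\tilde q}$, and cut down to one case. Because $R(\p{m})$ and $C(\p{m})$ are intervals ending at $p$ and at $q$, the count $\#(R(\p{m})\cap[\tilde p])$ equals $r+1$ exactly when $\tilde p\ge p$ and is at most $r$ otherwise, and symmetrically for columns. Hence condition~(1) of Definition~\ref{def:attend} can hold only if $\tilde q\ge q$, and condition~(2) only if $\tilde p\ge p$. The subcase $\tilde p\ge p$ and $\tilde q\ge q$ is impossible: then $\{i\in[p]:w_i\le q\}\subseteq\{i\in[\tilde p]:w_i\le\tilde q\}$ forces $\rank(w^T_{\tilde p\tilde q})\ge\rank(w^T_{pq})=r$, against $\rank(w^T_{\tilde p\tilde q})<r$. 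So, up to transposing rows and columns, the only remaining possibility is that condition~(1) holds while $\tilde p<p$.

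Third, this is the heart of the matter. In this case condition~(1) forces $\#(R(\p{m})\cap[\tilde p])\ge 1$, hence $\tilde p\ge p-r$, and then $\#(R(\p{m})\cap[\tilde p]) = \tilde p-p+r+1$; the inequality $\#(R(\p{m})\cap[\tilde p])\ge\rank(w^T_{\tilde p\tilde q})+1$ therefore reads $\rank(w^T_{\tilde p\tilde q})\le\tilde p-p+r$. Against this I would place the matching lower bound obtained by splitting $[p]$ as $[\tilde p]\sqcup\{\tilde p+1,\dots,p\}$ and using monotonicity of the rank function in the column argument (since $\tilde q\ge q$): $r = \rank(w^T_{\tilde p q}) + \#\{i:\tilde p<i\le p,\ w_i\le q\}\le\rank(w^T_{\tilde p\tilde q}) + (p-\tilde p)$, that is, $\rank(w^T_{\tilde p\tilde q})\ge\tilde p-p+r$. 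So every inequality here is an equality; in particular $\#\{i:\tilde p<i\le p,\ w_i\le q\} = p-\tilde p$, which forces $w_i\le q$ for every $i$ with $\tilde p<i\le p$, and in particular $w_p\le q$ --- contradicting $w_p>q$. The transposed argument (condition~(2) holding with $\tilde q<q$) runs identically with the roles of rows and columns exchanged, and ends by contradicting $w^{-1}(q)>p$. Either way, $\p{m}$ attends no such $X_{\tilde p\tilde q}$, so $\p{m}$ is elusive.

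The only genuine obstacle is this last case and its transpose: one has to notice that forcing the row-intersection count up to $\rank(w^T_{\tilde p\tilde q})+1$ squeezes the rank function to its maximal value along the discarded rows of $X_{pq}$, which means each discarded row --- row $p$ in particular --- carries a permutation entry inside the first $q$ columns, exactly what membership of $(p,q)$ in the Rothe diagram forbids. Everything else is bookkeeping with intervals and with the monotonicity $\rank(w^T_{p'q'})\le\rank(w^T_{p''q''})$ for $p'\le p''$ and $q'\le q''$.
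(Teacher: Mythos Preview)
The paper does not supply its own proof of this proposition; it is quoted verbatim from \cite{GY22M}. Your direct combinatorial verification is correct and self-contained. The key step---squeezing $\rank(w^T_{\tilde p\tilde q})$ between the upper bound $\tilde p-p+r$ from the attendance condition and the lower bound $\tilde p-p+r$ from splitting $[p]=[\tilde p]\sqcup(\tilde p,p]$---forces $w_i\le q$ for all $\tilde p<i\le p$, in particular $w_p\le q$, contradicting $(p,q)\in D(w)$. One small point worth making explicit: when you conclude ``every inequality here is an equality'', you are using that the two nonnegative slacks $\rank(w^T_{\tilde p\tilde q})-\rank(w^T_{\tilde p q})$ and $(p-\tilde p)-\#\{i:\tilde p<i\le p,\ w_i\le q\}$ sum to zero, hence each vanishes separately; this is immediate but deserves a word.
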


\subsection{Ladder determinantal ideal}
\label{sec:pre-ladder}

In this subsection $X$ is a generic matrix of size $m\times n$ with entries $x_{11}, \ldots, x_{mn}$. Ladder determinantal ideals are those generated by minors inside a special subset, called a ladder due to its shape, of $X$ in $\kx$, where $\p{x} = \{x_{11}, \ldots, x_{mn}\}$.

A (two-sided) \emph{ladder} in $X$ is a subset $L$ of $X$ such that whenever $x_{ij}, x_{k\ell} \in L$ with $i\leq k$ and $j \leq \ell$, we have $x_{pq} \in L$ for all $i \leq p \leq k$ and $j \leq q \leq \ell$. In other words, if the variables at the northwest and southeast corners of a submatrix of $X$ belong to a ladder $L$, then all the variables in this submatrix are in $L$ too. Note that the definition of ladders here is different from that in \cite{GOR07M} in the sense that our ladder is obtained by rotating the one there clockwisely by 90 degrees. This adjustment we make is to accord the directions of (anti-)diagonal blocks in the blockwise determinantal ideals with the (anti-)diagonal term orders, as the reader will soon see in Section~\ref{sec:block}. With this adjustment the one-one correspondence between one-sided ladder determinantal ideals and Schubert determinantal ideals for vexillary permutations, which is recalled later as Theorem~\ref{thm:one-sided-schubert}, also becomes more natural. 

Any ladder in $X$ can be specified by four sequences of integer indices. Let
\begin{equation}
  \label{eq:ladder-seq}
  \begin{split}
    &1 \leq a_1 < \cdots < a_{\ell} \leq m, \quad n \geq b_1 \geq \cdots \geq b_{\ell} \geq 1\\
    &1 \leq c_1 < \cdots < c_u \leq m, \quad n \geq d_1 \geq \cdots \geq d_u \geq 1.
  \end{split}
\end{equation}
Then a ladder $L$ in $X$ can be defined as
$$ L = \{x_{pq} |~ \exists i, j \mbox{ such that } 1 \leq p \leq a_i, 1 \leq q \leq b_i, c_j\leq p \leq m, d_j \leq q \leq n\}.$$
And for this ladder $L$, the variables $x_{a_1b_1}, \ldots, x_{a_{\ell}b_{\ell}}$ are called its \emph{lower corners} and $x_{c_1d_1}, \ldots, x_{c_ud_u}$ called its \emph{upper corners}.

\begin{example}\rm
  Consider a generic square matrix $X$ of size $9$. The left subfigure in Figure~\ref{fig:ladder} demonstrates a two-sided ladder in $X$ with the lower corners $x_{19}, x_{28}, x_{57}, x_{65}, x_{84}, x_{91}$ and upper corners $x_{16}, x_{34}, x_{42}, x_{61}$ (all marked as shaded boxes). 
\end{example}

  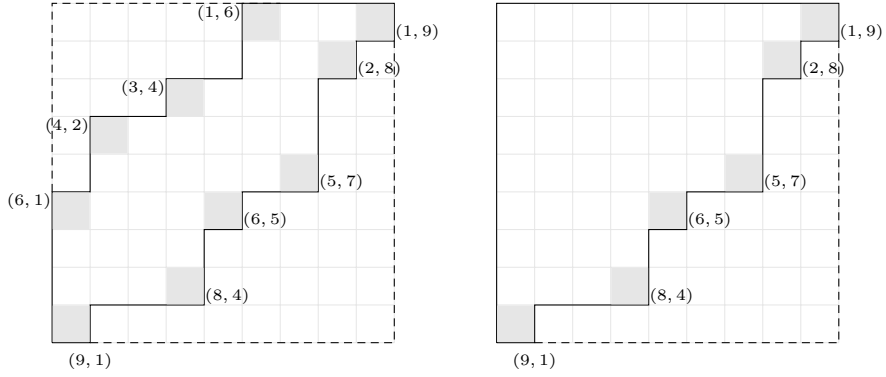
\begin{figure}[h]
    \centering
            \begin{subfigure}{.48\textwidth}
      \centering

      \begin{tikzpicture}
         \begin{scope}[scale =0.5]
                \draw[help lines,color=gray!20] (0,0) grid (9,9);
                \draw[black] (0,0)--(1,0)--(1,1)--(4,1)--(4,3)--(5,3)--(5,4)--(7,4)--(7,7)--(8,7)--(8,8)--(9,8)--(9,9)--(5,9)--(5,7)--(3,7)--(3,6)--(1,6)--(1,4)--(0,4)--(0,0);
   
                \node[font=\fontsize{6}{6}\selectfont,below,black] at (1,0){$(9,1)$};
                \node[font=\fontsize{6}{6}\selectfont,right,black] at (3.8,1.25){$(8,4)$};
                \node[font=\fontsize{6}{6}\selectfont,right,black] at (4.8,3.25){$(6,5)$};
                \node[font=\fontsize{6}{6}\selectfont,right,black] at (6.8,4.25){$(5,7)$};
                \node[font=\fontsize{6}{6}\selectfont,right,black] at (7.8,7.25){$(2,8)$};
                \node[font=\fontsize{6}{6}\selectfont,right,black] at (8.8,8.25){$(1,9)$};
                \node[font=\fontsize{6}{6}\selectfont,left,black] at (5.2,8.75){$(1,6)$};
                \node[font=\fontsize{6}{6}\selectfont,left,black] at (3.2,6.75){$(3,4)$};
                \node[font=\fontsize{6}{6}\selectfont,left,black] at (1.2,5.75){$(4,2)$};
                \node[font=\fontsize{6}{6}\selectfont,left,black] at (0.2,3.75){$(6,1)$};

                \filldraw[gray!20] (0.03,0.03) rectangle (0.97,0.97);
                \filldraw[gray!20] (3.03,1.03) rectangle (3.97,1.97);
                \filldraw[gray!20] (4.03,3.03) rectangle (4.97,3.97);
                \filldraw[gray!20] (6.03,4.03) rectangle (6.97,4.97);
                \filldraw[gray!20] (7.03,7.03) rectangle (7.98,7.97);
                \filldraw[gray!20] (8.03,8.03) rectangle (8.97,8.97);
                \filldraw[gray!20] (0.03,3.03) rectangle (0.97,3.97);
                \filldraw[gray!20] (1.03,5.03) rectangle (1.97,5.97);
                \filldraw[gray!20] (3.03,6.03) rectangle (3.97,6.97);
                \filldraw[gray!20] (5.03,8.03) rectangle (5.97,8.97);

                \draw[densely dashed,black](0,4)--(0,9)--(6,9);
                \draw[densely dashed,black](1,0)--(9,0)--(9,8);

            \end{scope}
          \end{tikzpicture}
        \end{subfigure}
            \begin{subfigure}{.48\textwidth}
      \centering

           \begin{tikzpicture} \begin{scope}[scale =0.5]
                \draw[help lines,color=gray!20] (0,0) grid (9,9);
                \draw[black] (0,0)--(1,0)--(1,1)--(4,1)--(4,3)--(5,3)--(5,4)--(7,4)--
                (7,7)--(8,7)--(8,8)--(9,8)--(9,9)--(0,9)--(0,0);

                \node[font=\fontsize{6}{6}\selectfont,below,black] at (1,0){$(9,1)$};
                \node[font=\fontsize{6}{6}\selectfont,right,black] at (3.8,1.25){$(8,4)$};
                \node[font=\fontsize{6}{6}\selectfont,right,black] at (4.8,3.25){$(6,5)$};
                \node[font=\fontsize{6}{6}\selectfont,right,black] at (6.8,4.25){$(5,7)$};
                \node[font=\fontsize{6}{6}\selectfont,right,black] at (7.8,7.25){$(2,8)$};
                \node[font=\fontsize{6}{6}\selectfont,right,black] at (8.8,8.25){$(1,9)$};

                \filldraw[gray!20] (0.03,0.03) rectangle (0.97,0.97);
                \filldraw[gray!20] (3.03,1.03) rectangle (3.97,1.97);
                \filldraw[gray!20] (4.03,3.03) rectangle (4.97,3.97);
                \filldraw[gray!20] (6.03,4.03) rectangle (6.97,4.97);
                \filldraw[gray!20] (7.03,7.03) rectangle (7.97,7.97);
                \filldraw[gray!20] (8.03,8.03) rectangle (8.97,8.97);
                \draw[densely dashed,black](1,0)--(9,0)--(9,8);
            \end{scope}
          \end{tikzpicture}
                  \end{subfigure}

          \caption{Two-sided and one-sided ladders in a generic square matrix of size 9}\label{fig:ladder}
\end{figure}

A ladder $L$ in $X$ is said to be \emph{one-sided} if all its upper corners lie in the first row (or column) of $L$. Take the northwest-most upper corner $c$ of $L$, then $L$, as a special two-sided ladder, can be represented by $c$ and all its lower corners. In particular, when $c = x_{11}$, the one-sided ladder $L$ is said to be \emph{full} and can be specified by two sequences of integers to represent all its lower corners (see the right subfigure in Figure~\ref{fig:ladder} for an example). One-sided ladder determinantal ideals defined below are generated by minors of specified sizes inside full one-sided ladders. 

\begin{definition}\rm\label{def:one-sided-ideal}
Let $X$ be a generic matrix of size $m \times n$. Let $\p{a} = (a_1, \ldots, a_k)$ and $\p{b} = (b_1, \ldots, b_k)$ be two integer sequences of length $k$ such that
  \begin{equation}
    \label{eq:one-sided-ab}
    1 \leq a_1 \leq a_2 \leq \cdots \leq a_k \leq m \mbox{ and } n \geq b_1 \geq b_2 \geq \cdots \geq b_k\geq 1
  \end{equation}
  and $\p{r} = (r_1, \ldots, r_k)$ be a sequence of positive integers such that
  \begin{equation}
    \label{eq:one-sided-r}
  0<a_1-r_1<a_2-r_2<\cdots <a_k-r_k \mbox{ and } b_1-r_1>b_2-r_2>\cdots>b_k-r_k>0.  
  \end{equation}
Then the \emph{one-sided ladder determinantal ideal} $I(\p{a},\p{b},\p{r})$ is the ideal in $\kx$ generated by all the minors of size $r_i$ in $X_{a_ib_i}$ for all $i = 1,\ldots, k$.
\end{definition}

One can check that conditions~\eqref{eq:one-sided-r} are to remove straightforward redundant generators: if $a_i -r_i \geq a_{i+1}-r_{i+1}$ for some $i$, then each $r_{i+1}$-minor in $X_{a_{i+1} b_{i+1}}$ can be represented by the $r_i$-minors in $X_{a_ib_i}$ with the generalized Laplace expansion and thus is redundant as a generator of the ideal.

The following fundamental correspondence between one-sided ladder determinantal ideals and Schubert determinantal ideals for vexillary permutations is established in \cite{FUL92F}.

\begin{theorem}[{\cite[Proposition~9.6]{FUL92F}}]\label{thm:one-sided-schubert}
  Let $I(\p{a},\p{b},\p{r})$ be a one-sided ladder determinantal ideal with $\p{a}$, $\p{b}$, and $\p{r}$ satisfying conditions \eqref{eq:one-sided-ab} and \eqref{eq:one-sided-r}. Then for each $n \geq a_k+b_1$, there exists a unique vexillary permutation $w\in S_n$ such that
  \begin{equation}
    \label{eq:one-sided-ess}
    \ess(w) = \{(a_1, b_1), \ldots, (a_k, b_k)\} \mbox{ and } \rank(w^T_{a_ib_i}) = r_i-1 \mbox{ for } i=1, \ldots, k.
  \end{equation}
   Furthermore, for each vexillary permutation there exist unique $\p{a}$, $\p{b}$, and $\p{r}$ satisfying
  \eqref{eq:one-sided-ab}, \eqref{eq:one-sided-r}, and \eqref{eq:one-sided-ess}.
\end{theorem}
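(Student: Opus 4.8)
I would argue through the standard dictionary between permutations and their rank matrices. For $v\in S_N$ set $\rho_v:=\big(\rank(v^T_{pq})\big)_{0\le p,q\le N}$; recall from \cite{FUL92F,MS05C} that $v$ is recovered from $\rho_v$ as the array of its mixed second differences, that an integer array $\rho$ on $\{0,\dots,N\}^2$ equals some $\rho_v$ precisely when $\rho(0,\cdot)=\rho(\cdot,0)=0$, $\rho(N,q)=q$, $\rho(p,N)=p$, $\rho(p,q)-\rho(p-1,q)\in\{0,1\}$, $\rho(p,q)-\rho(p,q-1)\in\{0,1\}$ and $\rho(p,q)-\rho(p-1,q)-\rho(p,q-1)+\rho(p-1,q-1)\in\{0,1\}$, and that $D(v)=\{(p,q):\rho_v(p,q)=\rho_v(p-1,q)=\rho_v(p,q-1)\}$, so that $\ess(v)$ is the set of cells of $D(v)$ having neither eastern nor southern neighbour in $D(v)$. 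By the vexillarity criterion recalled in Section~\ref{sec:pre-schubert}, $v$ is vexillary exactly when $\ess(v)$ can be listed with weakly increasing rows and weakly decreasing columns. Thus the theorem reduces to one statement: the partial prescription $\rho(a_i,b_i)=r_i-1$ extends uniquely to a valid rank matrix on $\{0,\dots,n\}^2$ with essential set exactly $\{(a_1,b_1),\dots,(a_k,b_k)\}$, and conversely every vexillary permutation's rank matrix is such an extension.

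\smallskip
\noindent Given $\p{a},\p{b},\p{r}$ obeying \eqref{eq:one-sided-ab}--\eqref{eq:one-sided-r} and $n\ge a_k+b_1$, I would put
\[
\rho(p,q)\ :=\ \min\!\Big(\,p,\ q,\ \min_{1\le i\le k}\big[(r_i-1)+(p-a_i)_{+}+(q-b_i)_{+}\big]\Big),\qquad x_{+}:=\max(x,0).
\]
Monotonicity and the unit-step property force any valid extension of the prescribed data to lie $\le\rho$ (each bracket caps the value at $r_i-1$ northwest of $(a_i,b_i)$ and bounds its growth to the southeast), so it remains to show $\rho$ is itself valid. The checks are: (a) using \eqref{eq:one-sided-ab} and the strict separations \eqref{eq:one-sided-r}, the inner minimum at $(a_j,b_j)$ is attained only by $i=j$, so $\rho(a_j,b_j)=r_j-1$; (b) $\rho(n,q)=q$ and $\rho(p,n)=p$ — here $a_i+b_i\le a_k+b_1\le n$ forces $(r_i-1)+(n-a_i)+(q-b_i)_{+}\ge q$ for all $i,q$, and this is the one place the hypothesis $n\ge a_k+b_1$ intervenes; (c) the unit-step conditions, read off the formula; (d) the mixed second-difference condition, where \eqref{eq:one-sided-r} does its real work, the strict inequalities guaranteeing that the minimizing index in the formula shifts by at most one grid step across any unit square, so that the corank stays in $\{0,1\}$. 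Granting (a)--(d), $\rho=\rho_w$ for a unique $w\in S_n$; reading $D(w)$ and $\ess(w)$ off $\rho$ — again using \eqref{eq:one-sided-r} so that no $(a_i,b_i)$ is absorbed by a neighbour and no spurious southeast corner appears — yields \eqref{eq:one-sided-ess}, and since the $a_i$ weakly increase and the $b_i$ weakly decrease, $\ess(w)$ is an admissible antichain, so $w$ is vexillary.

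\smallskip
\noindent Uniqueness of $w$ is then formal: any $w'$ realizing \eqref{eq:one-sided-ess} has a rank matrix $\rho'$ which is $\le\min(p,q)$, caps at $r_i-1$ northwest of each $(a_i,b_i)$, and, since $\ess(w')$ contains nothing else, must already equal $\min(p,q)$ at every cell weakly northwest of no $(a_i,b_i)$; these constraints pin $\rho'=\rho$, so $w'=w$. For the converse, from a vexillary $w\in S_n$ I would list $\ess(w)$ with weakly increasing rows; vexillarity makes the columns weakly decrease, giving $\p{a},\p{b}$ as in \eqref{eq:one-sided-ab}, and I set $r_i=\rank(w^T_{a_ib_i})+1$. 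Then $r_i\le\min(a_i,b_i)$ since $(a_i,b_i)\in D(w)$, and the strict inequalities \eqref{eq:one-sided-r} (endpoints included) follow from each $(a_i,b_i)$ being genuinely essential — so that its rank condition is not a consequence of the conditions at the cells weakly northwest of it, the redundancy phenomenon noted after Definition~\ref{def:one-sided-ideal}; and $n\ge a_k+b_1$ comes from a pigeonhole count on $w$ itself, chasing $w_{a_k}>b_k$ and $w^{-1}_{b_1}>a_1$ through the permutation. The two passages are mutually inverse because each is merely passage to and from the rank matrix.

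\smallskip
\noindent The step I expect to fight hardest is (d): verifying that the explicit array $\rho$ obeys the mixed second-difference axiom, which demands a careful case analysis of how the minimizing index in the formula changes between neighbouring unit squares and a proof that \eqref{eq:one-sided-r} forces the corank to be $0$ or $1$. The precise identity $\ess(w)=\{(a_i,b_i)\}$ rides on the same analysis, and it must be mirrored, in the converse direction, by the dual claim that the essential data of an arbitrary vexillary permutation always obeys \eqref{eq:one-sided-r} and $n\ge a_k+b_1$.
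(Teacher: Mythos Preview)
The paper does not supply its own proof of this theorem: it is quoted verbatim as \cite[Proposition~9.6]{FUL92F} and the surrounding text merely records that Fulton's proof also furnishes an explicit construction of $w$ from $(\p{a},\p{b},\p{r})$. So there is no in-paper argument to compare your proposal against.

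That said, your strategy is essentially the standard one, and close in spirit to Fulton's own argument: build the rank matrix directly and read the permutation off it. Your closed formula $\rho(p,q)=\min\big(p,\,q,\,\min_i[(r_i-1)+(p-a_i)_{+}+(q-b_i)_{+}]\big)$ is a clean way to package the construction, and your checks (a) and (b) are correct as stated (the inequalities in \eqref{eq:one-sided-r} do exactly what you claim for (a), and $n\ge a_k+b_1\ge a_i+b_i$ dispatches (b)). You are right that (d), the corank-in-$\{0,1\}$ condition, is where the real case analysis lives; the same analysis is what pins down $\ess(w)$ exactly, so those two verifications should be done together rather than sequentially. Your uniqueness argument is slightly too brisk: knowing $\ess(w')=\{(a_i,b_i)\}$ and the values $\rho'(a_i,b_i)$ does force $\rho'=\rho$, but the clean way to see it is that the essential set with its rank values already determines the full rank matrix (this is the content of Fulton's Lemma~3.10 in \cite{FUL92F}), rather than the maximality argument you sketch. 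In the converse direction, the bound $n\ge a_k+b_1$ does not follow from a pigeonhole on $w$ in general (take $w=21\in S_2$: $\ess(w)=\{(1,1)\}$, so $a_k+b_1=2=n$, fine; but for $w=321\in S_3$, $\ess(w)=\{(2,2)\}$ with $a_k+b_1=4>3=n$), so that clause of the statement should be read as a hypothesis on $n$ in the forward direction only, not as something you must derive in the converse.
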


As one can find, condition~\eqref{eq:one-sided-ess} in Theorem~\ref{thm:one-sided-schubert} above implies that the Fulton generators of the Schubert determinantal ideal $I_w$ for the unique vexillary permutation $w \in S_n$ in the theorem, once $n$ is fixed, coincide with the generators of $I(\p{a},\p{b},\p{r})$ as defined in Definition~\ref{def:one-sided-ideal}. In particular, for any one-sided ladder determinantal ideal, an effective method to find the corresponding vexillary permutation in Theorem~\ref{thm:one-sided-schubert} is also presented in the proof of \cite[Proposition~9.6]{FUL92F}. 

Combining Theorem~\ref{thm:gb-schubert} with the correspondence in Theorem~\ref{thm:one-sided-schubert}, the \grobner bases of one-sided ladder determinantal ideals can be obtained immediately. 

\begin{theorem}\label{thm:gb-one-sided}
Let $I(\p{a},\p{b},\p{r})$ be a one-sided ladder determinantal ideals with $\p{a}$, $\p{b}$, and $\p{r}$ satisfying the conditions in \eqref{eq:one-sided-ab} and \eqref{eq:one-sided-r} and $w \in S_n$ for $n=a_k+b_1$ be the vexillary permutation in Theorem~\ref{thm:one-sided-schubert} corresponding to $I(\p{a},\p{b},\p{r})$. Then the Fulton generators of $I_w$ form a \grobner basis of $I(\p{a},\p{b},\p{r})$ w.r.t. any (anti-)diagonal term order. 
\end{theorem}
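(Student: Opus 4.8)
The plan is to reduce the statement for one-sided ladder determinantal ideals to the already-known result for Schubert determinantal ideals (Theorem~\ref{thm:gb-schubert}) via the explicit correspondence of Theorem~\ref{thm:one-sided-schubert}. The key observation is that the hypothesis fixes $n = a_k + b_1$, which is exactly the threshold $n \geq a_k + b_1$ required in Theorem~\ref{thm:one-sided-schubert}, so there is a unique vexillary permutation $w \in S_n$ with $\ess(w) = \{(a_1,b_1), \ldots, (a_k,b_k)\}$ and $\rank(w^T_{a_ib_i}) = r_i - 1$ for all $i$.

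First I would spell out the equality of generating sets. By Definition~\ref{def:one-sided-ideal}, $I(\p{a},\p{b},\p{r})$ is generated by all minors of size $r_i$ in $X_{a_ib_i}$ for $i = 1, \ldots, k$. By Fulton's theorem (recalled after Definition~\ref{def:schubert}), $I_w$ is generated by its Fulton generators, namely all minors of $X_{pq}$ of size $\rank(w^T_{pq}) + 1$ for $(p,q) \in \ess(w)$. Substituting $\ess(w) = \{(a_i,b_i)\}$ and $\rank(w^T_{a_ib_i}) + 1 = r_i$ from \eqref{eq:one-sided-ess}, the Fulton generators of $I_w$ are precisely the minors of size $r_i$ in $X_{a_ib_i}$ for $i = 1, \ldots, k$. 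Hence the two generating sets coincide verbatim, and in particular $I(\p{a},\p{b},\p{r}) = I_w$ as ideals of $\kx$ (note both ideals live in the same polynomial ring, since $X$ is the $n \times n$ generic matrix with $n = a_k + b_1 \geq m$, using $a_k \leq m$; if one wants $X$ of size $m \times n$ with $m < n$ one embeds it in the northwest corner of the square generic matrix, which does not affect the minors involved since all $(a_i,b_i)$ satisfy $a_i \leq m$).

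Then the conclusion is immediate: by Theorem~\ref{thm:gb-schubert}(1), the Fulton generators form a \grobner basis of $I_w$ w.r.t. any anti-diagonal term order, and since $w$ is vexillary, by Theorem~\ref{thm:gb-schubert}(2) they also form a \grobner basis w.r.t. any diagonal term order. As these Fulton generators are exactly the defining generators of $I(\p{a},\p{b},\p{r})$ and the two ideals are equal, the same set is a \grobner basis of $I(\p{a},\p{b},\p{r})$ w.r.t. any (anti-)diagonal term order.

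The only point requiring genuine care — the ``main obstacle,'' though it is mild — is the bookkeeping around the ambient ring: one must check that conditions \eqref{eq:one-sided-ab} and \eqref{eq:one-sided-r} guarantee $a_k \leq m \leq n = a_k + b_1$ and $b_1 \leq n$, so that $X_{a_ib_i}$ makes sense both as a submatrix of the $m \times n$ generic matrix and of the $n \times n$ one, and that no generator is lost or gained when passing between the two viewpoints. Once this compatibility is noted, the proof is a direct citation of Theorem~\ref{thm:one-sided-schubert} followed by Theorem~\ref{thm:gb-schubert}; there is no separate combinatorial or algebraic argument to carry out.
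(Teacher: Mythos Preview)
Your proposal is correct and follows exactly the approach the paper indicates: the paper states the theorem as an immediate consequence of combining Theorem~\ref{thm:gb-schubert} with the correspondence in Theorem~\ref{thm:one-sided-schubert}, and you have spelled out precisely that combination (including the identification of the Fulton generators of $I_w$ with the defining generators of $I(\p{a},\p{b},\p{r})$ via \eqref{eq:one-sided-ess}). Your extra care about the ambient ring is more than the paper provides but is harmless.
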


Let $L$ be a ladder which is specified by four sequences as in \eqref{eq:ladder-seq}. For $i=1, \ldots, u$, denote $L_i = \{x_{pq}\in L |~ c_i \leq p \leq m, d_i \leq q \leq n\}$. Then each $L_i$ is a one-sided ladder with the unique upper corner $x_{c_id_i}$ and $L = \bigcup_{i=1}^u L_i$. Consider the polynomial ring $\fk[L]$ in the variables from $L$ over $\fk$. For a positive integer $r_i$, denote by $I(L_i,r_i)$ the ideal in $\fk[L]$ generated by all the $r_i$-minors of $X$ whose all entries belong to $L_i$. Then the two-sided ladder determinantal ideal for $L$ is defined by taking the sum of all these ideals $I(L_i,r_i)$. 

\begin{definition}\rm \label{def:mixed}
  Let $X$ be a generic matrix of size $m \times n$, $L$ be a ladder in $X$ with the upper corners $c_1, \ldots, c_u \in \p{x}$, and $L = \bigcup_{i=1}^u L_i$ with each $L_i$ a one-sided ladder with the unique upper corner $c_i$ for $i=1, \ldots, u$. Then for a sequence $\p{r} = (r_1, \ldots, r_u)$ of positive integers, the \emph{two-sided ladder determinantal ideal} $I(L, \p{r})$ of $L$ w.r.t. $\p{r}$ is the ideal $I(L,\p{r}) = \sum_{i=1}^u I(L_i,r_i)$ in $\fk[L]$. 
\end{definition}

Instead of the term \emph{mixed} ladder determinantal ideal in \cite{GOR07M}, we choose to call it a \emph{two-sided} one in Definition~\ref{def:mixed}. This choice bears the potential possibility of generalization to $n$-sided ladder determinantal ideals by using $(n\!-\!1)$-sided ladders, as done with 1- and 2-sided ladder determinantal ideals above (viewing squares in Definition~\ref{def:one-sided-ideal} as 0-sided ladders).

For a lexicographic diagonal term order induced by a scanning variable order (see Section~\ref{sec:w-char} for more details), the \grobner basis of a two-sided ladder determinantal ideal can be determined as shown in the theorem below.

\begin{theorem}[{\cite[Theorem~1.10]{GOR07M}}]\label{thm:mixed}
  Let $I(L,\p{r})$ be a two-sided ladder determinantal ideal of $L = \bigcup_{i=1}^u L_i$ w.r.t. $\p{r}$. For each $i=1, \ldots, u$, let $\pset{G}_i$ be the set of all the $r_i$-minors with all entries in $L_i$ such that at most their $r_j-1$ columns belong to $L_j$ for $1\leq j <i$ and at most $r_j-1$ rows belong to $L_j$ for $i < j \leq u$. Then $\pset{G} = \bigcup _{i=1}^u \pset{G}_i$ is a \grobner basis of $I(L,\p{r})$ w.r.t. any lexicographic diagonal term order induced by a scanning variable order.
\end{theorem}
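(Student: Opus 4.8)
The statement is a \grobner basis identity, so the plan is to establish separately that $\pset{G}$ generates $I(L,\p{r})$ and that $\langle\lt(\pset{G})\rangle=\lt(I(L,\p{r}))$. For the generation, recall $I(L,\p{r})=\sum_{i=1}^u I(L_i,r_i)$, where $I(L_i,r_i)$ is generated by \emph{all} $r_i$-minors supported on $L_i$; hence it suffices to show that every $r_i$-minor $\mu$ in $L_i$ violating the defining conditions of $\pset{G}_i$ already lies in $\langle\pset{G}\rangle$. If $\mu$ uses at least $r_j$ columns lying in $L_j$ for some $j<i$, expand $\det\mu$ by the generalized Laplace rule along those columns: this writes $\det\mu$ as a combination of $r_j$-minors supported on $L_j$ times complementary minors, and each such $r_j$-minor is a generator of $I(L_j,r_j)$, hence lies in $\langle\pset{G}\rangle$ after iterating this reduction finitely often. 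The symmetric argument, expanding along rows, handles the case of at least $r_j$ rows in $L_j$ with $j>i$. Thus $\langle\pset{G}\rangle=I(L,\p{r})$, which in particular gives the easy inclusion $\langle\lt(\pset{G})\rangle\subseteq\lt(I(L,\p{r}))$ for free.

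The core is the reverse inclusion $\lt(I(L,\p{r}))\subseteq\langle\lt(\pset{G})\rangle$, and I would prove it by induction on the number $u$ of upper corners of $L$. When $u=1$ the ladder $L=L_1$ is one-sided and, by the computation above, $\pset{G}=\pset{G}_1$ is exactly the set of all $r_1$-minors in $L_1$, i.e.\ the generators of the one-sided ladder determinantal ideal $I(L_1,r_1)$; translating through Theorem~\ref{thm:one-sided-schubert} these are the Fulton generators of the associated vexillary permutation, so the claim is Theorem~\ref{thm:gb-one-sided}. For the inductive step, peel off the bottom-left one-sided piece $L_u$: set $L'=\bigcup_{i<u}L_i$, a ladder with upper corners $c_1,\ldots,c_{u-1}$, and put $I'=\sum_{i<u}I(L_i,r_i)$, $I''=I(L_u,r_u)$, so $I(L,\p{r})=I'+I''$. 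By the induction hypothesis the analogue of $\pset{G}$ for $L'$ is a \grobner basis of $I'$, and the base case applies to $I''$; the minors that $\pset{G}$ discards relative to this pair, namely those acquiring $r_u$ or more rows in $L_u$, are Laplace-redundant by the first paragraph, so it remains to see that the trimmed union is a \grobner basis of $I'+I''$. I would do this either (a) via Buchberger's criterion, checking that the $S$-polynomial of a minor from $\pset{G}_i$ with $i<u$ and one from $\pset{G}_u$ reduces to zero --- only pairs whose diagonal leading terms overlap matter, and such overlaps can be resolved by the straightening law for products of minors in a ladder, organized by an induction that records which $L_j$ each intermediate minor lives in --- or, more cleanly, (b) via a Hilbert-series comparison: $\langle\lt(\pset{G})\rangle$ is a squarefree monomial ideal whose Stanley--Reisner complex has facets indexed by maximal families of non-crossing lattice paths in $L$, so its Hilbert series is an explicit enumerative quantity, and one checks that it equals the Hilbert series of $\kx/I(L,\p{r})$ recorded for mixed ladder determinantal rings in \cite{GOR07M}; since $\langle\lt(\pset{G})\rangle\subseteq\lt(I(L,\p{r}))$ with equal Hilbert functions, the two ideals coincide.

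I expect this reverse inclusion to be the main obstacle. Route (a) is technically heavy: $S$-polynomial reductions among minors straddling several of the $L_i$ can be long, and keeping the straightening steps inside the span of $\pset{G}$ needs a carefully chosen induction. Route (b) is conceptually cleaner but imports the nontrivial fact that $\kx/I(L,\p{r})$ is Cohen--Macaulay with explicitly computable dimension and multiplicity, which for \emph{mixed} (rather than one-sided) ladders is itself a substantial result of \cite{GOR07M} --- such varieties need not even be irreducible. A third, essentially equivalent, route is the Sturmfels--Herzog--Trung strategy: use the Robinson--Schensted--Knuth correspondence to biject the monomials avoiding $\langle\lt(\pset{G})\rangle$ with a basis of standard bitableaux of $\kx/I(L,\p{r})$, which settles the dimension match and the \grobner basis property simultaneously. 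In all cases the lexicographic-diagonal hypothesis on the term order is used twice over: to guarantee that the leading term of every minor is its main diagonal, and to pin down the precise generators of the resulting initial ideal.
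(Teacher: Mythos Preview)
The paper does not prove Theorem~\ref{thm:mixed}; it is quoted verbatim from \cite[Theorem~1.10]{GOR07M} and used thereafter as a black box (for instance, as the input to Proposition~\ref{prop:mixed-min}). There is therefore no proof in this paper to compare your proposal against. For what it is worth, your routes~(b) and the RSK variant are in the spirit of how such results are established in the literature, and you correctly flag that route~(b) leans on the Cohen--Macaulay and Hilbert-series computations that are themselves the substance of \cite{GOR07M}; but adjudicating your sketch against the original argument would require consulting that reference rather than the present paper.
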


\section{Blockwise determinantal ideals and their \grobner bases}
\label{sec:block}

Let $X$ be a generic matrix of size $m \times n$ throughout this section. As one can see, the above-mentioned determinantal ideals are defined by generators of different sets of minors of $X$ such that for each set, the minors are of the same size and have entries from the same subset of $X$.

Let $B$ be a non-empty subset of $X$. Then $B$ is said to be \emph{anti-diagonal} in $X$ if whenever two variables $x_{pq}, x_{\tilde{p}\tilde{q}} \in B$ for $p \geq \tilde{p}$ and $q \leq \tilde{q}$, all the variables in $\{x_{ij}: \tilde{p}\leq i \leq p, q \leq j \leq \tilde{q}\}$ also belong to $B$. That is to say, if the northeast and southwest corners of a submatrix belong to $B$, all the entries in the submatrix do too. \emph{Diagonal} subsets can be similarly defined. Clearly, $X_{pq}$ for any $p \in [m]$ and $q \in [n]$ is both anti-diagonal and diagonal, while the ladders defined in Section~\ref{sec:pre-ladder} are only diagonal.

\begin{definition}\rm\label{def:block}
  Let $\p{B} = (B_1, \ldots, B_k)$ be a sequence of subsets of $X$ and $\p{r} = (r_1, \ldots, r_k)$ be a sequence of positive integers. Then the \emph{blockwise determinantal ideal} w.r.t. $\p{B}$ and $\p{r}$, denoted by $I(\p{B}, \p{r})$, is the ideal in $\kx$ generated by all the minors of size $r_i$ in $B_i$ of $X$ for all $i \in [k]$. The subsets $B_1, \ldots, B_k$ are called the \emph{blocks} of $I(\p{B}, \p{r})$. When all its blocks are anti-diagonal (resp. diagonal) in $X$, $I(\p{B}, \p{r})$ is said to be \emph{anti-diagonal} (resp. \emph{diagonal}). 
\end{definition}

Blockwise determinantal ideals are generalization of many determinantal ideals of interest as follows. 

\begin{enumerate}
\item For the determinantal ideal $I_r$ generated by all the $r$-minors in $X$, we have $I_r = I(X, r)$ with a single block. 
\item For the general determinantal ideal $I$ considered in \cite[Page~5]{BV06D} defined by four sequences
\begin{equation*}
  \begin{split}
    &1 \leq u_1 \leq \cdots \leq u_p \leq m, \quad 0 \leq r_1 < \cdots < r_p < m,\\
    &1 \leq v_1 \leq \cdots \leq v_q \leq n, \quad 0 \leq s_1 < \cdots < s_q < n.
  \end{split}
\end{equation*}
Let $\p{B} = (\p{R}_1, \ldots, \p{R}_p, \p{C}_1, \ldots, \p{C}_q)$ with $\p{R}_i$ consisting of the first $u_i$ rows of $X$ for $i\in [p]$ and $\p{C}_j$ of the first $v_j$ columns of $X$ for $j \in [q]$ and $\p{r} = (r_1+1, \ldots, r_p+1, s_1+1, \ldots, s_q+1)$. Then $I$ equals $I(\p{B}, \p{r})$ and it is both anti-diagonal and diagonal. 
\item For the Schubert determinantal ideal $I_w$, let $\p{B} = (X_{pq}: (p,q) \in \ess(w))$ and $\p{r} = (\rank(w^{T}_{pq})+1: (p,q)\in \ess(w))$. Then $I_w$ can be written as the blockwise determinantal ideal $I(\p{B}, \p{r})$, and it is both anti-diagonal and diagonal. 
\item For the one-sided determinantal ideal $I(\p{a}, \p{b}, \p{r})$ with $\p{a}$ and $\p{b}$ satisfying condition~\eqref{eq:one-sided-ab} and $\p{r}$ satisfying condition~\eqref{eq:one-sided-r}, it is straightforward to see that $I(\p{a}, \p{b}, \p{r}) = I(\p{B}, \p{r})$ with $B_i = X_{a_ib_i}$ for $i=1, \ldots, k$ and it is also both anti-diagonal and diagonal. 
\item For the two-sided determinantal ideal $I(L,\p{r})$ in Definition~\ref{def:mixed}, clearly one has $I(L,\p{r}) = I(\p{B}, \p{r})$ with $\p{B} = (L_1, \ldots, L_u)$ and it is only diagonal. To be precise, in fact we extend the ideal $I(L,\p{r})$ from $\fk[L]$ in Definition~\ref{def:mixed} to $\kx$. 
\end{enumerate}

In this paper we are only interested in anti-diagonal and diagonal blockwise determinantal ideals, and one will see in later sections that \grobner bases of blockwise determinantal ideals only make sense w.r.t. anti-diagonal (resp. diagonal) term orders for anti-diagonal (resp. diagonal) blockwise determinantal ideals: by simply checking the underlying term orders used for the \grobner bases for Schubert determinantal ideals in Section~\ref{sec:pre-schubert} and ladder determinantal ideals in Section~\ref{sec:pre-ladder}, one can already see this. Since diagonal subsets of $X$ are precisely ladders in $X$ defined in Section~\ref{sec:pre-ladder} and anti-diagonal subsets are precisely rotated ladders by 90 degrees, the blockwise determinantal ideals of our interest are actually constructed from ladders (or rotated ladders) in general, and thus they strictly contain all the above examples. 

The defining blocks of a general blockwise determinantal ideal can have complicated relative positions, and thus in general it is hard to conclude whether the generating sets of blockwise determinantal ideals form their \grobner bases. Next we present several sufficient conditions for this. The discussions are for \grobner bases of diagonal blockwise determinantal ideals and the underlying term orders are diagonal ones accordingly. The same results hold for \grobner bases of anti-diagonal blockwise determinantal ideals w.r.t. anti-diagonal term orders. The following theorem is the main tool in our discussions. 

\begin{theorem}[{\cite[Lemma 1.3.14]{C1993G}}]\label{thm:grobnerblcok}

    Let $I_1$ and $I_2$ be two homogeneous ideals in $\fk[x_1, \ldots, x_n]$ and $\pset{G}_1$ and $\pset{G}_2$ be their \grobner bases w.r.t. a term order $<$ respectively. Then $\pset{G}_1 \cup \pset{G}_2 $ form a \grobner basis of $I_1 + I_2$ w.r.t. $<$ if and only if for all $G_1 \in \pset{G}_1$ and $G_2 \in \pset{G}_2$, there exists a polynomial $F \in I_1\cap I_2$ such that $\lt(F)=\lcm(\lt(G_1),\lt(G_2))$.
\end{theorem}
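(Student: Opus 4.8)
The plan is to reduce the claim to Buchberger's S-polynomial criterion, used in its \emph{bounded representation} form: $\pset{G}_1 \cup \pset{G}_2$ is a \grobner basis of $\bases{\pset{G}_1\cup\pset{G}_2} = I_1+I_2$ w.r.t. $<$ if and only if for every pair $g\neq h$ in $\pset{G}_1\cup\pset{G}_2$ its S-polynomial $S(g,h)$ admits a representation $S(g,h)=\sum_q a_q q$ over $\pset{G}_1\cup\pset{G}_2$ with $\lt(a_q q) < \lcm(\lt(g),\lt(h))$ for each nonzero $a_q$ (see, e.g., \cite{CLO1997I}). If $g,h$ lie on the same side, say $g,h\in\pset{G}_1$, such a representation already exists: since $\pset{G}_1$ is a \grobner basis of $I_1$, $S(g,h)$ reduces to $0$ modulo $\pset{G}_1$, and the reduction trace is a representation bounded by $\lt(S(g,h)) < \lcm(\lt(g),\lt(h))$; the $\pset{G}_2$-side is symmetric. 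Hence the criterion collapses to: for every $G_1\in\pset{G}_1$ and $G_2\in\pset{G}_2$, $S(G_1,G_2)$ has such a bounded representation over $\pset{G}_1\cup\pset{G}_2$ below $\p{m}:=\lcm(\lt(G_1),\lt(G_2))$. The remaining task is to show, pair by pair, that this is equivalent to the existence of $F\in I_1\cap I_2$ with $\lt(F)=\p{m}$. Throughout I write $P_i := \frac{\p{m}}{\lc(G_i)\,\lt(G_i)}\,G_i \in I_i$ for $i=1,2$, so that $\lt(P_i)=\p{m}$ with leading coefficient $1$ and $S(G_1,G_2)=P_1-P_2$, whence $\lt(S(G_1,G_2))<\p{m}$.

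For the ``if'' direction, take such an $F$ and scale it so that $\lc(F)=1$. Then $P_1-F\in I_1$ and, since the two copies of $\p{m}$ cancel, $\lt(P_1-F)<\p{m}$; as $\pset{G}_1$ is a \grobner basis of $I_1$, $P_1-F$ has a standard representation over $\pset{G}_1$ bounded by $\lt(P_1-F)<\p{m}$. Symmetrically $P_2-F$ has one over $\pset{G}_2$ bounded below $\p{m}$. Writing $S(G_1,G_2)=(P_1-F)-(P_2-F)$ and concatenating these two representations yields a representation of $S(G_1,G_2)$ over $\pset{G}_1\cup\pset{G}_2$ all of whose summands have leading term below $\p{m}=\lcm(\lt(G_1),\lt(G_2))$, which is exactly the bounded representation the criterion requires.

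For the ``only if'' direction, assume $\pset{G}_1\cup\pset{G}_2$ is a \grobner basis of $I_1+I_2$. Since $S(G_1,G_2)\in I_1+I_2$ and $\lt(S(G_1,G_2))<\p{m}$, it reduces to $0$ modulo $\pset{G}_1\cup\pset{G}_2$; grouping its reduction trace according to whether the reducer comes from $\pset{G}_1$ or $\pset{G}_2$ writes $S(G_1,G_2)=A_1+A_2$ with $A_i\in I_i$ and $\lt(A_i)<\p{m}$. Setting $F:=P_1-A_1$, we have $F\in I_1$ by construction, while rearranging $P_1-P_2 = S(G_1,G_2) = A_1+A_2$ gives $F = A_2+P_2 \in I_2$; thus $F\in I_1\cap I_2$, and $\lt(F)=\p{m}$ because $\lt(P_1)=\p{m}$ dominates $\lt(A_1)$ (in particular $F\neq 0$). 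This produces, for each cross pair, the required $F$, completing the equivalence.

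I expect the only genuine subtlety --- the main obstacle --- to be the choice of which form of Buchberger's criterion to use: in the ``if'' direction one may not assume a priori that $\pset{G}_1\cup\pset{G}_2$ is a \grobner basis (that is the conclusion), so one must argue with the bounded-representation version rather than with literal reduction to $0$, and the crux is assembling such a representation for $S(G_1,G_2)$ out of the two pieces $P_1-F$ and $P_2-F$. Everything else is routine bookkeeping with leading terms and with the $I_i$-membership of the various linear combinations. I would also remark that homogeneity of $I_1$ and $I_2$ is not actually used in this argument; one keeps it only because it is the setting of interest and because it permits choosing $F$ homogeneous of degree $\deg\p{m}$ if desired.
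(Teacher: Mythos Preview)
The paper does not supply its own proof of this theorem: it is quoted verbatim as \cite[Lemma~1.3.14]{C1993G} and used as a black box in the proofs of Propositions~\ref{prop:disjointleadingterm} and~\ref{prop:rowcolumn}. So there is no in-paper argument to compare against.

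Your argument is correct. The reduction to Buchberger's criterion in its bounded-representation (standard-representation) form is the natural route, and you handle the only nontrivial point cleanly: for a cross pair $G_1\in\pset{G}_1$, $G_2\in\pset{G}_2$ you split $S(G_1,G_2)=(P_1-F)-(P_2-F)$ and use that each piece lies in a single $I_i$ with leading term strictly below $\p{m}$, so the \grobner bases $\pset{G}_i$ individually furnish bounded representations which you then concatenate. In the converse you correctly group a bounded representation of $S(G_1,G_2)$ by provenance of the reducers to obtain $A_1+A_2$ with $\lt(A_i)<\p{m}$, and $F:=P_1-A_1=P_2+A_2$ does the job. Your closing remark that homogeneity of $I_1,I_2$ is not used is also accurate; the hypothesis is present in the cited source for the context in which the lemma is applied, not because the argument needs it.
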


In the sequel of this section, $I(\p{B}, \p{r})$ is a diagonal blockwise determinantal ideal in $\fk[\p{x}]$ with $\p{B}=(B_1,\ldots,B_k)$ and $\p{r}=(r_1,\ldots,r_k)$, $\pset{G}_i$ is the set of all the $r_i$-minors in $B_i$ and $I_i = \bases{\pset{G}_i} \subseteq \fk[\p{x}]$ for each $i\in [k]$, and $<$ is an arbitrary lexicographic diagonal term order induced by a scanning variable order. Then we know that $I(\p{B}, \p{r})= \sum_{i=1}^k I_i$ with each $I_i$ a homogeneous ideal in $\fk[\p{x}]$ and $\pset{G}_i$ a \grobner basis of $I_i$ w.r.t. $<$ for $i \in [k]$. 

\begin{proposition}\label{prop:disjointleadingterm}
If for any $G_i \in \pset{G}_i$ and $G_j \in \pset{G}_j$ with $i\neq j$, the variables in $\lt(G_i)$ are completely different from those in $\lt(G_j)$, then $\bigcup_{i=1}^k \pset{G}_i$ is a \grobner basis of $I(\p{B}, \p{r})$ w.r.t. $<$. 
\end{proposition}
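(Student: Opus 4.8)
The plan is to apply Theorem~\ref{thm:grobnerblcok} iteratively and reduce the whole statement to a single verification about pairs of blocks. Since $I(\p{B}, \p{r}) = \sum_{i=1}^k I_i$, I would argue by induction on $k$: writing $J = I_1 + \cdots + I_{k-1}$ with \grobner basis $\pset{H} := \bigcup_{i=1}^{k-1}\pset{G}_i$ (available by the induction hypothesis), it suffices to check the \grobner-basis criterion of Theorem~\ref{thm:grobnerblcok} for the pair $(J, I_k)$, i.e. that for every $H \in \pset{H}$ and every $G_k \in \pset{G}_k$ there is a polynomial $F \in J \cap I_k$ with $\lt(F) = \lcm(\lt(H), \lt(G_k))$. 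Because each $H \in \pset{H}$ lies in some $\pset{G}_j$ with $j < k$, this reduces to the two-block case: for arbitrary $i \neq j$, $G_i \in \pset{G}_i$, $G_j \in \pset{G}_j$, produce $F \in I_i \cap I_j$ with $\lt(F) = \lcm(\lt(G_i), \lt(G_j))$.

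For the two-block case I would use the hypothesis directly. By assumption the variables occurring in $\lt(G_i)$ are disjoint from those in $\lt(G_j)$; in particular $\gcd(\lt(G_i), \lt(G_j)) = 1$, so $\lcm(\lt(G_i), \lt(G_j)) = \lt(G_i)\lt(G_j)$. The natural candidate is simply $F := G_i G_j$. Then $F \in I_i \cap I_j$ since $G_i \in I_i$ forces $F \in I_i$ and $G_j \in I_j$ forces $F \in I_j$ (both $I_i, I_j$ being ideals). It remains to confirm $\lt(G_i G_j) = \lt(G_i)\lt(G_j)$, which holds for any term order: $\lt$ is multiplicative on products of polynomials (the product of leading terms is the unique maximal term in the expansion of the product, as $<$ is compatible with multiplication). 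Hence $\lt(F) = \lt(G_i)\lt(G_j) = \lcm(\lt(G_i), \lt(G_j))$, exactly as required.

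Putting the pieces together: the base case $k=1$ is trivial since $\pset{G}_1$ is a \grobner basis of $I_1$ by hypothesis, and the inductive step invokes Theorem~\ref{thm:grobnerblcok} on $(J, I_k)$ after checking the $\lcm$-condition via the product construction $H \cdot G_k$ for each $H \in \pset{H}$, $G_k \in \pset{G}_k$ — this is legitimate because the disjointness-of-variables hypothesis is assumed for \emph{all} pairs $i \neq j$, hence in particular for the pair $(j, k)$ whenever $H \in \pset{G}_j$. One small point to make explicit is that each $I_i$ is indeed a homogeneous ideal (its generators are minors, each homogeneous of degree $r_i$), so Theorem~\ref{thm:grobnerblcok} applies at every stage; this is already noted in the paragraph preceding the proposition.

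I do not anticipate a serious obstacle here — the argument is essentially a one-line application of Conca's criterion once one observes that disjoint leading-term variables make the $\lcm$ split as a product and that $G_i G_j$ is the obvious element of the intersection. The only point requiring minor care is the bookkeeping in the induction: ensuring that the \grobner-basis property of the partial union $\bigcup_{i<k}\pset{G}_i$ is genuinely supplied by the induction hypothesis (which needs the disjointness condition restricted to pairs in $[k-1]$, a subset of the full hypothesis) before one is entitled to treat it as $\pset{H}$ in the final application of Theorem~\ref{thm:grobnerblcok}.
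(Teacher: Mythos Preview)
Your proposal is correct and follows essentially the same approach as the paper: induction on $k$, with the inductive step handled by applying Theorem~\ref{thm:grobnerblcok} using the product $G_iG_j$ as the witness in $I_i\cap I_j$, its leading term equalling the $\lcm$ because the disjoint-variables hypothesis forces $\gcd(\lt(G_i),\lt(G_j))=1$. Your write-up is in fact slightly more explicit than the paper's in checking the homogeneity precondition and the multiplicativity of $\lt$.
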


\begin{proof}

    We prove the proposition by induction on $k$. Clearly this statement holds for $k=1$ and assume that it holds for $\ell-1$ blocks, next we prove it for $\ell$ blocks. By the inductive assumption we know that $\bigcup_{i=1}^{\ell-1} \pset{G}_i$ is a \grobner basis of $\tilde{I}=\sum_{i=1}^{\ell-1} I_i$. For each $G \in \bigcup_{i=1}^{\ell-1} \pset{G}_i$ and $G_{\ell} \in \pset{G}_{\ell}$, we have $GG_{\ell} \in \tilde{I} \cap I_{\ell}$ and $\lt(GG_{\ell})=\lt(G)\lt(G_{\ell})=\lcm(\lt(G),\lt(G_{\ell}))$, for $\lt(G)$ and $\lt(G_{\ell})$ do not share any variable. Then by Theorem ~\ref{thm:grobnerblcok}, we know that $\bigcup_{i=1}^{\ell} \pset{G}_i$ is a \grobner basis of $\sum_{i=1}^{\ell} I_i$.
\end{proof}

\begin{corollary}\label{cor:disjoint block}

    If $B_i \cap B_j =\emptyset $ for each $i \neq j$, then $\bigcup_{i=1}^k \pset{G}_i$ is a \grobner basis of $I(\p{B}, \p{r})$ w.r.t. $<$. 
\end{corollary}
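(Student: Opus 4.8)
The plan is to derive this as an immediate consequence of Proposition~\ref{prop:disjointleadingterm}, so the only thing to verify is that pairwise disjointness of the blocks as subsets of $X$ forces the leading terms of minors from different blocks to involve disjoint sets of variables. First I would recall the bookkeeping: any minor $G_i \in \pset{G}_i$ is the determinant of a submatrix $X[R,C]$ all of whose entries lie in $B_i$, and hence $G_i$, being a signed sum of products of such entries, is a polynomial in the variables $\{x_{pq} : x_{pq}\in B_i\}$ exclusively. In particular every term of $G_i$ — and a fortiori $\lt(G_i)$ — is a monomial in those variables.

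Next I would invoke the hypothesis $B_i \cap B_j = \emptyset$ for $i\neq j$: since the variables occurring in $\lt(G_i)$ form a subset of $B_i$ and those in $\lt(G_j)$ form a subset of $B_j$, these two variable sets are disjoint. Thus the hypothesis of Proposition~\ref{prop:disjointleadingterm} holds verbatim for every pair $G_i \in \pset{G}_i$, $G_j \in \pset{G}_j$ with $i\neq j$, and the conclusion that $\bigcup_{i=1}^k \pset{G}_i$ is a \grobner basis of $I(\p{B},\p{r})$ w.r.t. $<$ follows directly.

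I do not expect any real obstacle here; the statement is essentially a specialization of the previous proposition. The only point that warrants a sentence of care is the identification of the variables appearing in a minor with a subset of the ambient block, and the observation that this containment passes to the leading term regardless of which diagonal term order is in force (the leading term of a minor is in any case the product of its diagonal entries, which are among the entries of the submatrix and hence in the block). If I wanted to avoid citing Proposition~\ref{prop:disjointleadingterm} and argue from scratch, I would instead run the same induction on $k$ as in its proof, using the products $G\cdot G_\ell$ as the witnesses in $\tilde I \cap I_\ell$ required by Theorem~\ref{thm:grobnerblcok}; but the clean route is simply to quote the proposition.
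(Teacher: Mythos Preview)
Your proposal is correct and follows exactly the same approach as the paper: observe that disjoint blocks force any minor from $B_i$ to involve variables entirely disjoint from those of any minor from $B_j$, and then invoke Proposition~\ref{prop:disjointleadingterm}. The paper's proof is a one-liner to this effect.
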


\begin{proof}
    The proof is straightforward because the variables in $G_i \in \pset{G}_i$ are completely different from those in $G_j \in \pset{G}_j$ when $B_i \cap B_j =\emptyset$ for $i\neq j$. 
\end{proof}

     Let $G$ be an $r_i$-minor in $B_i$ and $B_j$ be some other block with $r_j <r_i$. Then $G$ is said to \emph{attend} $B_j$ if $B_j$ contains at least $r_j$ full rows or full columns of $F$.

\begin{corollary}

    For any $G_i \in \pset{G}_i$ and $G_j\in \pset{G}_j$ with $r_j < r_i$ for $i \neq j$, if either $G_i$ attends $B_j$ or $\lt(G_i)\lt(G_j)=\lcm(\lt(G_i),\lt(G_j))$, then $\bigcup_{i=1}^k \pset{G}_i$ is a \grobner basis of $I(\p{B}, \p{r})$ w.r.t. $<$. 
\end{corollary}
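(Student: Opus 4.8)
The plan is to follow the inductive scheme used for Proposition~\ref{prop:disjointleadingterm}: prove by induction on $k$ that $\bigcup_{i=1}^{k}\pset{G}_i$ is a \grobner basis of $I(\p{B},\p{r})=\sum_{i=1}^{k}I_i$. The case $k=1$ is trivial, so suppose $\bigcup_{i=1}^{\ell-1}\pset{G}_i$ is a \grobner basis of $\tilde{I}:=\sum_{i=1}^{\ell-1}I_i$. Applying Theorem~\ref{thm:grobnerblcok} to the homogeneous ideals $\tilde I$ and $I_\ell$, it suffices to produce, for each $G\in\bigcup_{i=1}^{\ell-1}\pset{G}_i$ and each $G_\ell\in\pset{G}_\ell$, a polynomial $F\in\tilde I\cap I_\ell$ with $\lt(F)=\lcm(\lt(G),\lt(G_\ell))$; writing $G\in\pset{G}_i$ with $i<\ell$, it is in fact enough to find $F$ in the smaller set $I_i\cap I_\ell$. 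Of the two blocks $B_i$ and $B_\ell$ one has strictly smaller rank parameter, say $r_\ell<r_i$ (the opposite case is identical after swapping the roles of $G$ and $G_\ell$ throughout; I would also point out that the hypothesis as written imposes nothing on pairs of blocks with $r_i=r_j$, so strictly speaking those pairs must be assumed to satisfy the coprime-leading-term condition). Then the hypothesis gives the dichotomy: either $G$ attends $B_\ell$, or $\lt(G)\lt(G_\ell)=\lcm(\lt(G),\lt(G_\ell))$.

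In the second case take $F:=G\,G_\ell$: since $G\in I_i$ and $G_\ell\in I_\ell$ are both elements of ideals, $F\in I_i\cap I_\ell$, and $\lt(F)=\lt(G)\lt(G_\ell)=\lcm(\lt(G),\lt(G_\ell))$ by assumption. In the first case I would first show $G\in I_\ell$. Write $G=\det N$ for the $r_i\times r_i$ submatrix $N$ of $X$ of which $G$ is the minor. By the definition of attending, $B_\ell$ contains $r_\ell$ full rows (or, symmetrically, $r_\ell$ full columns) of $N$; expanding $\det N$ by the generalized Laplace rule along exactly those $r_\ell$ rows expresses $G$ as a $\fk[\p{x}]$-linear combination of the $r_\ell\times r_\ell$ minors of $N$ supported on those rows, and each such minor has all of its entries in $B_\ell$, hence is an $r_\ell$-minor in $B_\ell$, i.e. an element of $\pset{G}_\ell\subseteq I_\ell$. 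Thus $G\in I_\ell$, so $G\in I_i\cap I_\ell$. Since $I_\ell$ is an ideal, the polynomial $F:=G\cdot\bigl(\lt(G_\ell)/\gcd(\lt(G),\lt(G_\ell))\bigr)$ — that is, $G$ multiplied by a single monomial — also lies in $I_i\cap I_\ell$, and multiplying a polynomial by a monomial multiplies its leading term by that monomial, so $\lt(F)=\lt(G)\cdot\lt(G_\ell)/\gcd(\lt(G),\lt(G_\ell))=\lcm(\lt(G),\lt(G_\ell))$ by the identity $ab=\gcd(a,b)\lcm(a,b)$ for monomials. In either case the desired $F$ exists, so Theorem~\ref{thm:grobnerblcok} gives that $\bigcup_{i=1}^{\ell}\pset{G}_i$ is a \grobner basis of $\tilde I+I_\ell=\sum_{i=1}^{\ell}I_i$, and the induction closes.

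The one step that is not purely formal is the implication ``$G$ attends $B_\ell\Rightarrow G\in I_\ell$'', where one genuinely uses that attending means $r_\ell$ \emph{full} rows or columns lie inside $B_\ell$, so that the Laplace expansion yields honest $r_\ell$-minors supported in $B_\ell$; this is the blockwise analogue of the observation recorded after Definition~\ref{def:attend} for Schubert ideals. Everything else — reducing to a pairwise condition through Theorem~\ref{thm:grobnerblcok} plus induction, closure of ideals under multiplication by arbitrary polynomials, and the monomial arithmetic $ab=\gcd(a,b)\lcm(a,b)$ — is routine. The only real care I expect is bookkeeping: keeping the roles of rows versus columns and of $G$ versus $G_\ell$ consistent across the ``full rows''/``full columns'' and $r_\ell<r_i$/$r_i<r_\ell$ cases, and deciding how to treat the equal-rank block pairs that the stated hypothesis does not cover.
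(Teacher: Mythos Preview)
Your proof is correct and a bit more direct than the paper's. The paper first passes to the subsets $\overline{\pset{G}}_i=\{G\in\pset{G}_i:G\text{ does not attend any }B_j,\ j\neq i\}$, observes via the generalized Laplace expansion that every discarded minor already lies in some $I_j$ (so $\bigcup_i\overline{\pset{G}}_i$ still generates $I(\p{B},\p{r})$), and then invokes Theorem~\ref{thm:grobnerblcok} on the remaining minors, which by hypothesis all satisfy the coprime leading-term condition pairwise; finally it notes that enlarging a \grobner basis by elements of the ideal keeps it a \grobner basis. You instead keep all minors and verify the $\lcm$-condition of Theorem~\ref{thm:grobnerblcok} directly, splitting into the coprime case (take $F=G\,G_\ell$) and the attending case (use Laplace to get $G\in I_\ell$, then multiply by the missing monomial factor). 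Both routes rest on the same key observation ``attends $\Rightarrow$ lies in the other ideal via Laplace expansion'', but your argument avoids having to argue separately that the pruned sets $\overline{\pset{G}}_i$ are themselves \grobner bases of the ideals they generate, a point the paper's ``one can easily check'' leaves implicit. Your remark that the hypothesis, as stated, says nothing about block pairs with $r_i=r_j$ is also well taken.
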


\begin{proof}
    Let $\overline{\pset{G}}_i = \{G \in \pset{G}_i: G \mbox{ does not attend any } B_j \mbox{ for } j \neq i\}.$ 
    For any $G \in \pset{G}_i \setminus \overline{\pset{G}}_i$, assume that it attends $B_j$ with $r_j < r_i$. Then $G$ can be generated by the $r_j$-minors in $B_j$ by applying the generalized Laplace expansion of $G$ w.r.t. $j$ rows or columns, and thus $G$ is redundant as a generator of $I_i$. In other words, $\bigcup_{i=1}^k \overline{\pset{G}}_i$ also generates $I(\p{B}, \p{r})$. By Theorem \ref{thm:grobnerblcok}, one can easily check that $\bigcup_{i=1}^k \overline{\pset{G}}_i$ is a \grobner basis of $I(\p{B}, \p{r})$, and thus $\bigcup_{i=1}^k \pset{G}_i$ is also a \grobner basis of this ideal w.r.t. $<$. 
\end{proof}

\begin{proposition}\label{prop:rowcolumn}

    If for each $G_i \in \pset{G}_i$ and $G_j \in \pset{G}_j$ with $i\neq j$, there exist $x_{cd} \in B_i \cap B_j,x_{ab} \in B_i$ and $x_{\tilde{a}\tilde{b}} \in B_j$ such that the following conditions hold. 
    \begin{enumerate}
        \item[(1)] $R(G_i) \subseteq [c,a]$, $C(G_i) \subseteq [d,b]$, $R(G_j) \subseteq [c,\tilde{a}]$, and $C(G_j) \subseteq [d,\tilde{b}]$;
        \item[(2)] Either \(a\geq \tilde{a},b\leq \tilde{b}\) or \(a\leq \tilde{a},b\geq \tilde{b}\). 
    \end{enumerate}
     Then $\bigcup_{i=1}^k \pset{G}_i$ is a \grobner basis of $I(\p{B}, \p{r})$ w.r.t. $<$. 
\end{proposition}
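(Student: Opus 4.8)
Here is how I would prove Proposition~\ref{prop:rowcolumn}.

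The plan is to argue by induction on the number $k$ of blocks, in the spirit of the proof of Proposition~\ref{prop:disjointleadingterm}, using Theorem~\ref{thm:grobnerblcok} as the engine at every step. The base case $k=1$ holds since $\pset{G}_1$ is a \grobner basis of $I_1$. For the inductive step, suppose $\bigcup_{i<\ell}\pset{G}_i$ is a \grobner basis of $\tilde{I}:=\sum_{i<\ell}I_i$. Applying Theorem~\ref{thm:grobnerblcok} to $\tilde{I}$ and $I_\ell$, it suffices to produce, for each $G_i\in\pset{G}_i$ with $i<\ell$ and each $G_\ell\in\pset{G}_\ell$, a polynomial $F\in\tilde{I}\cap I_\ell$ with $\lt(F)=\lcm(\lt(G_i),\lt(G_\ell))$; since $I_i\subseteq\tilde{I}$, it is enough to find such an $F$ inside $I_i\cap I_\ell$, so everything reduces to a statement about a single pair of blocks.

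Fix $G_i\in\pset{G}_i$, $G_j\in\pset{G}_j$ with $i\neq j$, and take $x_{cd}\in B_i\cap B_j$, $x_{ab}\in B_i$, $x_{\tilde{a}\tilde{b}}\in B_j$ witnessing conditions~(1) and~(2). Because $x_{cd},x_{ab}\in B_i$ and $B_i$ is diagonal, the entire rectangle $B_i':=[c,a]\times[d,b]$ lies in $B_i$, and likewise $B_j':=[c,\tilde{a}]\times[d,\tilde{b}]\subseteq B_j$; by~(1) the minor $G_i$ is an $r_i$-minor of $B_i'$ and $G_j$ is an $r_j$-minor of $B_j'$. Let $I_i'$ and $I_j'$ be generated by all the $r_i$-minors of $B_i'$ and all the $r_j$-minors of $B_j'$ respectively, so $I_i'\subseteq I_i$ and $I_j'\subseteq I_j$. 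By Theorem~\ref{thm:grobnerblcok}, the required $F\in I_i'\cap I_j'\subseteq I_i\cap I_j$ exists once we know that the $r_i$-minors of $B_i'$ together with the $r_j$-minors of $B_j'$ form a \grobner basis of $I_i'+I_j'$, so it remains to prove this pairwise claim.

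Here $B_i'$ and $B_j'$ are two rectangles sharing the northwest corner $(c,d)$, and by transposing $X$ — which carries diagonal subsets to diagonal subsets and diagonal term orders to diagonal term orders, interchanging the two alternatives of~(2) — we may assume $a\geq\tilde{a}$ and $b\leq\tilde{b}$, so $B_i'$ is taller while $B_j'$ is wider. Realizing $B_i',B_j'$ as the northwest submatrices $X_{a_2b_2}$ and $X_{a_1b_1}$ of a generic matrix of size $a_2\times(a_2+b_1)$ (the extra columns being fresh, which is harmless for \grobner bases and makes Theorem~\ref{thm:one-sided-schubert} applicable), with $a_1\leq a_2$ and $b_1\geq b_2$, the ideal $I_i'+I_j'$ becomes — provided the non-redundancy conditions~\eqref{eq:one-sided-r} hold — a one-sided ladder determinantal ideal in the sense of Definition~\ref{def:one-sided-ideal}; by Theorem~\ref{thm:gb-one-sided}, and since the defining generators of such an ideal are exactly the Fulton generators of the corresponding vexillary permutation, these generators — the $r_i$-minors of $B_i'$ together with the $r_j$-minors of $B_j'$ — form a \grobner basis w.r.t. any (anti-)diagonal term order, in particular w.r.t. $<$. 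If~\eqref{eq:one-sided-r} fails, then either one box carries no minor of the prescribed size, or $r_i-r_j\geq a-\tilde{a}$, or $r_j-r_i\geq\tilde{b}-b$; in every such case the generalized Laplace expansion shows that one of $I_i',I_j'$ contains the other (or is zero), and the pairwise claim is immediate.

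The crux, I expect, is this last pairwise claim — in particular, recognizing that condition~(2) is exactly what excludes the nested configuration $\tilde{a}<a$, $\tilde{b}<b$ (which corresponds to a non-vexillary essential pattern and for which the generators genuinely fail to form a \grobner basis), and handling carefully the relocation to a generic submatrix, the verification of~\eqref{eq:one-sided-ab}--\eqref{eq:one-sided-r}, and the boundary cases, although each of these steps individually is routine.
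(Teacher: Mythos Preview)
Your proof is correct and follows essentially the same route as the paper's: induction on $k$ via Theorem~\ref{thm:grobnerblcok}, reducing the inductive step to a pairwise statement, and then recognizing that the two rectangles $[c,a]\times[d,b]$ and $[c,\tilde a]\times[d,\tilde b]$ (with common northwest corner and satisfying condition~(2)) form a one-sided ladder so that Theorem~\ref{thm:gb-one-sided} applies. The paper does exactly this, though it glosses over the verification of~\eqref{eq:one-sided-r} and the embedding into a suitable ambient matrix that you spell out; your treatment of the degenerate cases where~\eqref{eq:one-sided-r} fails is a welcome bit of extra care, but not a genuinely different argument.
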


\begin{proof}
    We prove the proposition by induction on $k$. The statement is obviously true for $k=1$ and assume that it holds for $\ell -1$ blocks, next we prove it for $\ell$ blocks. By the inductive assumption we know that $\bigcup_{i=1}^{\ell-1} \pset{G}_i$ is a \grobner basis of $\tilde{I}=\sum_{i=1}^{\ell-1} I_i$. For any $G \in \bigcup_{i=1}^{\ell-1} \pset{G}_i$ and $G_{\ell} \in \pset{G}_{\ell}$, we have $G \in \pset{G}_i$ for some $i\in [\ell -1]$. For these two minors, by the assumption there exist $x_{cd} \in B_i \cap B_{\ell}$, $x_{ab} \in B_i$, and $x_{\tilde{a}\tilde{b}} \in B_{\ell}$ such that either $a\leq \tilde{a}, b \geq \tilde{b}$ or $a\geq \tilde{a},b\leq \tilde{b}$ holds (Figure~\ref{fortwominors} (left) below illustrates the variables appearing in $\lt(G)$ (marked with $\ast$) and those in $\lt(G_{\ell})$ (marked with orange circles)).

    Since either $a\leq \tilde{a}, b \geq \tilde{b}$ or $a\geq \tilde{a},b\leq \tilde{b}$, we can consider the one-sided ladder with the unique upper corner $x_{cd}$ and two lower corners $x_{ab}$ and $x_{\tilde{a}\tilde{b}}$. We use $\p{x}_{[r_1,r_2],[c_1,c_2]}$ to denote the subset of $\p{x}$ consisting the variables $x_{rc}$ such that $r_1\leq r \leq r_2$ and $c_1\leq c \leq c_2$. Let $\overline{\pset{G}}_i$ and $\overline{\pset{G}}_{\ell}$ be the sets of all the $r_i$-minors in $\p{x}_{[c,a],[d,b]}$ and $r_{\ell}$-minors in $\p{x}_{[c,\tilde{a}],[d,\tilde{b}]}$ respectively. Obviously we have $\overline{\pset{G}}_i \subseteq \pset{G}_i$ and $\overline{\pset{G}}_{\ell} \subseteq \pset{G}_{\ell}$, for $\p{x}_{[c,a],[d,b]} \subseteq B_i$ and $\p{x}_{[c,\tilde{a}],[d,\tilde{b}]}\subseteq B_{\ell}$. From condition~(1) we know that $G \in \overline{\pset{G}}_i$ and $G_{\ell} \in \overline{\pset{G}}_{\ell}$. Let $\Bar{\p{x}}=\p{x}_{[c,a],[d,b]} \cup \p{x}_{[c,\tilde{a}],[d,\tilde{b}]}$ and $\Bar{I}_i$ and $\Bar{I}_{\ell}$ be the ideals generated by $\overline{\pset{G}}_i$ and $\overline{\pset{G}}_{\ell}$ respectively in $\fk[\Bar{\p{x}}]$. Since $\Bar{I}=\Bar{I}_i + \Bar{I}_{\ell}$ is a one-sided ladder determinantal ideal with $\Bar{I}_i$ and $\Bar{I}_{\ell}$ being two homogeneous ideals in $\fk[\Bar{\p{x}}]$, $\overline{\pset{G}}_i \cup \overline{\pset{G}}_{\ell}$ forms a \grobner basis of $\Bar{I}$. By Theorem~\ref{thm:grobnerblcok}, there is a polynomial $H \in \bases{\overline{\pset{G}}_i} \cap \bases{\overline{\pset{G}}_{\ell}}$ such that $\lt(H)=\lcm(\lt(G),\lt(G_{\ell}))$. Then we have $H \in \bases{\pset{G}_i} \cap \bases{\pset{G}_{\ell}} =I_i \cap I_{\ell} \subseteq (\sum_{i=1}^{\ell -1}I_i) \cap I_{\ell}$. By our inductive assumption, $\bigcup_{i=1}^{\ell} \pset{G}_i$ is a \grobner basis of $\sum_{i=1}^{\ell}I_i$ w.r.t. $<$. 
\end{proof}

  \begin{figure}[h]
    \centering
            \begin{subfigure}{.48\textwidth}

        \centering
      \begin{tikzpicture}
        \begin{scope}[scale =0.42]
          \draw[help lines,color=gray!50] (0,0) grid (10,10);
          \draw[color=red] (0,3)--(10,3)--(10,10)--(0,10)--(0,0);
          \draw[color=black] (0,0)--(8,0)--(8,10)--(0,10)--(0,0);

          \draw[orange] (8.5,3.5) circle(0.45);
          \draw[orange] (5.5,4.5) circle(0.45);
          \draw[orange] (0.5,7.5) circle(0.45);
          \draw[orange] (2.5,5.5) circle(0.45);
           
          \node[black] at (6.5,0.5){$\ast$};
          \node[black]  at (4.5,1.5){$\ast$};
          \node[black]  at (3.5,3.5){$\ast$};
          \node[black]  at (2.5,5.5){$\ast$};
          \node[black]  at (1.5,7.5){$\ast$};

         \filldraw[gray!20] (0.05,9.05) rectangle (0.95,9.95);
         \filldraw[gray!20] (7.05,0.05) rectangle (7.95,0.95);
         \filldraw[gray!20] (9.05,3.05) rectangle (9.95,3.95);

          \node[font=\fontsize{7}{7}\selectfont,left, black] at (9.5,0.5){$x_{ab}$};
          \node[font=\fontsize{7}{7}\selectfont,left, black] at (0,9.5){$x_{cd}$};
          \node[font=\fontsize{7}{7}\selectfont,left, black] at (11.5,3.5){$x_{\tilde{a}\tilde{b}}$};

            \end{scope}
          \end{tikzpicture}

        \end{subfigure}
            \begin{subfigure}{.48\textwidth}
      \centering

      \begin{tikzpicture}
           \begin{scope}[scale=1.051]
            \draw[help lines,color=gray!50] (0,0) grid (4,4);
            \draw[color=red] (0,1)--(3,1)--(3,4)--(0,4)--(0,1);
             \draw[color=black] (0,3)--(1,3)--(1,4)--(0,4)--(0,3);

            \foreach \x in {0,1,2}{
                \foreach \y in {1,2,3}{
                  \filldraw[red!20] (\x+0.05,\y+0.05) rectangle (\x+0.95,\y+0.95);
                }        
             }
            \node[black]  at (0.5,3.5){$\ast$};
            
            \end{scope}
          \end{tikzpicture}
                  \end{subfigure}

          \caption{Illustration of $\lt(G)$ and $\lt(G_{\ell})$ in the proof of Proposition~\ref{prop:rowcolumn} (left); Illustration of the two minors in $I_w$ with $w=2143$ in Example~\ref{example:notGB} (right)}\label{fortwominors}
    \label{fig:notGB}
\end{figure}

\begin{example}\label{example:notGB}
    From the proof of Proposition~\ref{prop:rowcolumn}, one may reasonably ask whether the conditions of the proposition can be formulated as: for each $G_i \in \pset{G}_i$ and $G_j \in \pset{G}_j$ with $i\neq j$, there exists a one-sided ladder in $X$ which covers these two minors. But in general this reformulated condition cannot imply that $\bigcup_{i=1}^k \pset{G}_i$ is a \grobner basis of $I(\p{B}, \p{r})$ w.r.t. $<$. Take the Schubert determinantal ideal $I_w$ with $w=2143$ for example, its Futon generators consist of a 1-minor $x_{11}$ in $X_{11}$ and a 3-minor $(\{1,2,3\},\{1,2,3\})$ in $X_{33}$. These two minors is covered by $X_{33}$ which can be identified as a one-sided ladder, but they do not form a \grobner basis of $I_w$ w.r.t. any diagonal term order since $w$ is not vexillary.

\end{example}

\begin{corollary}\label{cor:uniqueuppercorner}
    If the blocks $B_1,\ldots,B_k$ of $I(\p{B}, \p{r})$ are one-sided ladders which share the same unique upper corner, and for any lower corners $x_{a_i b_i}$ of $B_i$ and $x_{\tilde{a}_j \tilde{b}_j}$ of $B_j$ with $i \neq j$, neither $a_i >\tilde{a}_j, b_i > \tilde{b}_j$ nor $a_i < \tilde{a}_j, b_i < \tilde{b}_j$ holds, then $\bigcup_{i=1}^k \pset{G}_i$ is a \grobner basis of $I(\p{B}, \p{r})$ w.r.t. $<$. 
\end{corollary}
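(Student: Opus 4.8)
The plan is to derive the statement as a direct corollary of Proposition~\ref{prop:rowcolumn}: I shall verify, for every pair $G_i \in \pset{G}_i$ and $G_j \in \pset{G}_j$ with $i \neq j$, the existence of the three variables $x_{cd} \in B_i \cap B_j$, $x_{ab} \in B_i$, and $x_{\tilde{a}\tilde{b}} \in B_j$ satisfying conditions~(1) and (2) there. Write $x_{\gamma\delta}$ for the common unique upper corner of all the blocks $B_1, \ldots, B_k$. By the definition of a one-sided ladder, its (northwest-most) upper corner $x_{\gamma\delta}$ is an element of the ladder and every element $x_{pq}$ of the ladder satisfies $p \geq \gamma$ and $q \geq \delta$; hence $x_{\gamma\delta} \in B_i \cap B_j$ and, for every index, each entry of $B_i$ has row index $\geq \gamma$ and column index $\geq \delta$. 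I therefore take $x_{cd} := x_{\gamma\delta}$, i.e.\ $c = \gamma$ and $d = \delta$.

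Next I choose $x_{ab}$ to be a lower corner of $B_i$, and this is the crucial point. Let $p := \max R(G_i)$ and $q := \max C(G_i)$. Since $p \in R(G_i)$ and $q \in C(G_i)$, the variable $x_{pq}$ is one of the entries of the $r_i \times r_i$ submatrix of $X$ whose determinant is $G_i$, and because $G_i \in \pset{G}_i$ has all its entries in $B_i$ we get $x_{pq} \in B_i$. As $B_i$ is a one-sided ladder with upper corner $x_{\gamma\delta}$, the membership $x_{pq} \in B_i$ forces the existence of a lower corner $x_{ab}$ of $B_i$ with $p \leq a$ and $q \leq b$; fix such a lower corner. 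Then $R(G_i) \subseteq [\gamma, p] \subseteq [\gamma, a]$ and $C(G_i) \subseteq [\delta, q] \subseteq [\delta, b]$, where the first inclusions use that all entries of $G_i$ lie in $B_i$. Choosing $x_{\tilde{a}\tilde{b}}$ as a lower corner of $B_j$ in exactly the same way from $G_j$ establishes condition~(1) of Proposition~\ref{prop:rowcolumn} with $c = \gamma$ and $d = \delta$.

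Finally, condition~(2) follows from the hypothesis of the corollary: since $x_{ab}$ is a lower corner of $B_i$ and $x_{\tilde{a}\tilde{b}}$ a lower corner of $B_j$ with $i \neq j$, neither $a > \tilde{a}, b > \tilde{b}$ nor $a < \tilde{a}, b < \tilde{b}$ can occur, so in every remaining case --- including $a = \tilde{a}$ or $b = \tilde{b}$ --- one of $a \geq \tilde{a}, b \leq \tilde{b}$ or $a \leq \tilde{a}, b \geq \tilde{b}$ holds, which is precisely condition~(2). Proposition~\ref{prop:rowcolumn} then yields that $\bigcup_{i=1}^k \pset{G}_i$ is a \grobner basis of $I(\p{B}, \p{r})$ with respect to $<$. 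The only genuinely delicate step is the choice made in the second paragraph: one must pass from the southeast-corner entry $x_{pq}$ of the submatrix of $G_i$ to a \emph{lower corner} of $B_i$ dominating it, so that the corollary's hypothesis --- phrased solely in terms of lower corners --- can be invoked verbatim; everything else is routine bookkeeping about the one-sided-ladder structure together with a finite case check.
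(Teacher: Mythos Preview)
Your proof is correct and follows essentially the same approach as the paper: both set $x_{cd}$ to be the common upper corner, pick lower corners of $B_i$ and $B_j$ dominating the respective minors to verify condition~(1) of Proposition~\ref{prop:rowcolumn}, and invoke the hypothesis for condition~(2). You supply more detail than the paper does --- explicitly constructing the southeast entry $x_{pq}$ of the submatrix and arguing from the ladder structure that a dominating lower corner exists --- whereas the paper simply asserts ``clearly such lower corners exist''; otherwise the arguments are identical.
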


\begin{proof}
    Let $x_{cd}$ be the unique upper corner. For each $G_i \in \pset{G}_i$ and $G_j \in \pset{G}_j$ with $i\neq j$, let $x_{a_ib_i}$ and $x_{\tilde{a}_j \tilde{b}_j}$ be any lower corners of $B_i$ and $B_j$ respectively such that $R(G_i) \subseteq [c, a_i], C(G_i) \subseteq [d, b_i]$ and $R(G_j) \subseteq [c, \tilde{a}_i], C(G_j) \subseteq [d, \tilde{b}_i]$ (clearly such lower corners exist). This implies condition~(1) of Proposition~\ref{prop:rowcolumn}, and condition~(2) follows from the assumption of this corollary. 
\end{proof}

  \begin{figure}[h]
    \centering
    \begin{tikzpicture}
       \begin{scope}[scale =0.25]
            \draw[help lines,color=gray!20] (0,1) grid (16,16);
            \draw[color=red] (0,3)--(5,3)--(5,6)--(7,6)--(7,10)--(12,10)--(12,14)--(16,14)--(16,16)--(0,16)--(0,3);
             \draw[color=black] (0,1)--(2,1)--(2,4)--(6,4)--(6,8)--(9,8)--(9,11)--(12,11)--(12,13)--(14,13)--(14,16)--(0,16)--(0,1);
            \draw[densely dashed,color=black](2,1)--(14,1)--(14,13);
            \draw[densely dashed,color=red](5,3)--(16,3)--(16,14);
            
            \filldraw[gray!20] (0.07,15.07) rectangle (0.93,15.93);
            \filldraw[red!20] (6.07,6.07) rectangle (6.93,6.93);
            \filldraw[gray!20] (8.07,8.07) rectangle (8.93,8.93);
            \node[font=\fontsize{6}{6}\selectfont,left, black] at (10.5,6){$x_{a_ib_i}$};
            \node[font=\fontsize{7}{7}\selectfont,left, black] at (0,15.5){$x_{cd}$};
            \node[font=\fontsize{6}{6}\selectfont,left, black] at (12.5,8){$x_{\tilde{a}_j\tilde{b}_j}$};

        \end{scope}
    \end{tikzpicture}
    \caption{Illustration of the block structures in Corollary~\ref{cor:uniqueuppercorner}}
\end{figure}

\section{Reduced \grobner bases of blockwise determinantal ideals via divisibility by leading terms}
\label{sec:reduced}

In this section we first introduce the notion of (anti-)diagonal length of a term in a block and prove that it full characterizes divisibility of this term by leading terms of minors in this block. Then we apply this tool to study the reduced \grobner bases of blockwise determinantal ideals under the assumption that their \grobner bases are known, and then these results are further applied to Schubert and ladder determinantal ideals. 

\subsection{Divisibility by leading terms}

Let $\p{m}$ be an $r$-minor of $X$. Note that $\p{m}$, after expansion, is a homogeneous polynomial in $\kx$ of degree $r$. Then any term $\p{t}$ of $\p{m}$ is indeed the product of $r$ entries of $\p{m}$ from distinct rows and columns of $X$. Write $\p{t} = \prod_{k=1}^r x_{i_kj_k}$ such that $i_1 < i_2 < \cdots < i_r$. Accordingly, we can also use the following representation of a two-row array for $\p{t}$:
\begin{equation}\label{eq:two-row}
  \p{t} = \left(
    \begin{tabular}[c]{cccc}
      $i_1$ & $i_2$ & $\cdots$ & $i_r$ \\
      $j_1$ & $j_2$ & $\cdots$ & $j_r$
    \end{tabular}
    \right).
  \end{equation}
We denote by $f(\p{t})$ and $s(\p{t})$ respectively the first and second sequence of the two-row array representation of $\p{t}$. Note that we restrict $f(\p{t})$ to be strictly increasing for any term $\p{t}$. With this representation, it is clear that a term $\p{t}'$ divides $\p{t}$ if and only if the two-row array of $\p{t}'$ forms a sub-array of that of $\p{t}$.

\begin{lemma}\label{lem:leadingTerm}
  Let $\p{t}$ be a product of $r$ variables from distinct rows and columns in $X$. Then $\p{t}$ is the leading term of some $r$-minor in $X$ w.r.t. the anti-diagonal (resp. diagonal) term order if and only if $s(\p{t})$ is a decreasing (resp. increasing) sequence. 
\end{lemma}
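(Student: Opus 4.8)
The plan is to reduce the statement to a single direct computation: identify the unique minor of which $\p{t}$ can possibly be a term, and then read off its leading term from the defining property of the term order.

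\emph{Step 1: $\p{t}$ lives in exactly one minor.} Write $\p{t}=\prod_{k=1}^{r}x_{i_kj_k}$ with $i_1<\cdots<i_r$ and the $j_k$ pairwise distinct, as in \eqref{eq:two-row}. I would first observe that $\p{t}$ is a term of exactly one $r$-minor of $X$, namely $\p{m}:=\det X[\{i_1,\dots,i_r\},\{j_1,\dots,j_r\}]$, the determinant of the submatrix on the rows $\{i_1,\dots,i_r\}$ and columns $\{j_1,\dots,j_r\}$. Indeed, if $\p{t}\in\term(\p{m}')$ for an $r$-minor $\p{m}'$ built on a row set $R$ and a column set $C$, then every factor $x_{i_kj_k}$ of $\p{t}$ is an entry of $X[R,C]$, so $\{i_1,\dots,i_r\}\subseteq R$ and $\{j_1,\dots,j_r\}\subseteq C$; since $|R|=|C|=r$, both inclusions are equalities and $\p{m}'=\p{m}$. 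Hence the assertion ``$\p{t}=\lt(\p{m}')$ for some $r$-minor $\p{m}'$'' is equivalent to ``$\p{t}=\lt(\p{m})$'', and everything comes down to computing $\lt(\p{m})$.

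\emph{Step 2: compute $\lt(\p{m})$.} Let $j_{\sigma(1)}<j_{\sigma(2)}<\cdots<j_{\sigma(r)}$ be the list $j_1,\dots,j_r$ put in increasing order, $\sigma$ the corresponding permutation of $[r]$. In the $r\times r$ submatrix defining $\p{m}$ the $k$-th row is row $i_k$ of $X$ and the $\ell$-th column is column $j_{\sigma(\ell)}$ of $X$, so its $(k,\ell)$ entry is $x_{i_kj_{\sigma(\ell)}}$. By definition of a diagonal (resp. anti-diagonal) term order, $\lt(\p{m})$ is the product of the entries on the main diagonal $\{(k,k):k\in[r]\}$ (resp. the anti-diagonal $\{(k,r+1-k):k\in[r]\}$) of this submatrix, i.e.
\begin{equation*}
  \lt(\p{m})=\prod_{k=1}^{r}x_{i_kj_{\sigma(k)}}\ \text{ (diagonal case)},\qquad \lt(\p{m})=\prod_{k=1}^{r}x_{i_kj_{\sigma(r+1-k)}}\ \text{ (anti-diagonal case)}.
\end{equation*}
Since $i_1<\cdots<i_r$, the first row of the two-row array of each of these terms is already increasing, while the second row is $(j_{\sigma(1)},\dots,j_{\sigma(r)})$ (strictly increasing) in the diagonal case and $(j_{\sigma(r)},\dots,j_{\sigma(1)})$ (strictly decreasing) in the anti-diagonal case.

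\emph{Step 3: conclude.} A term is determined by its two-row array, so $\p{t}=\lt(\p{m})$ in the anti-diagonal case precisely when $s(\p{t})=(j_1,\dots,j_r)$ equals $(j_{\sigma(r)},\dots,j_{\sigma(1)})$, which happens if and only if $s(\p{t})$ is decreasing (a decreasing sequence is its own increasing-sort reversed); similarly $\p{t}=\lt(\p{m})$ in the diagonal case if and only if $s(\p{t})$ is increasing. Combined with Step 1 this is exactly the claimed equivalence. I do not anticipate a real obstacle; the only delicate point is the index bookkeeping in Step 2 — in particular checking that leaving the columns of $\p{m}$ in the order dictated by $\p{t}$ while sorting its rows is consistent with the northeast-to-southwest orientation of the anti-diagonal, so that ``rows increasing'' along the anti-diagonal forces ``columns decreasing''.
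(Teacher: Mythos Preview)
Your argument is correct and is precisely the straightforward verification the paper has in mind; the paper's own proof is simply ``Straightforward.'' There is nothing to add.
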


\begin{proof}
  Straightforward. 
\end{proof}

To study reduced \grobner bases we need to test whether a term of some minor in the generators of a blockwise determinantal ideal is divisible by the leading term of some other minor generator, and the term orders of our interest are the anti-diagonal and diagonal ones. For a term $\p{t}$ of some minor in $X$ and a subset $B \subseteq X$, we define the intersection $\p{t} \cap B$ to be the two-row array constructed from that of $\p{t}$ in the form of \eqref{eq:two-row} by removing the $k$th column of $\p{t}$ if $x_{i_k j_k} \not \in B$ for $k \in [r]$.

\begin{definition}\rm \label{def:length}
  Let $\p{t}$ be a term of some minor in $X$ and $B$ be a subset of $X$. Then the \emph{anti-diagonal length} (resp. \emph{diagonal length}) of $\p{t}$ in $B$, denoted by $l_a(\p{t}, B)$ (resp. $l_d(\p{t}, B)$), is the length of the longest decreasing (resp. increasing) subsequence of $s(\p{t} \cap B)$. 
\end{definition}

In case an anti-diagonal or diagonal term order is fixed in the context, we would omit the subscript to write $l(\p{t}, B)$ for the anti-diagonal or diagonal length. When $\p{t}$ is the leading term of some minor generator w.r.t. an anti-diagonal term order, $s(\p{t} \cap B)$ itself is a decreasing sequence, and thus in this case $l_a(\p{t}, B)$ equals the length of $\p{t} \cap B$. Clearly a similar result also holds for any diagonal term order.

\begin{theorem}\label{thm:divisible}
  Let $\p{t}$ be a term of some minor generator from the block $B \in \p{B}$ of an anti-diagonal (resp. diagonal) blockwise determinantal ideal $I(\p{B}, \p{r})$. Then for each $\tilde{B} \in \p{B} \setminus \{B\}$, there exists a minor $\p{m}$ in $\tilde{B}$ such that $\lt_a(\p{m}) | \p{t}$ (resp. $\lt_d(\p{m}) | \p{t}$) if and only if $l_a(\p{t}, \tilde{B}) \geq \tilde{r}$ (resp. $l_d(\p{t}, \tilde{B}) \geq \tilde{r}$), where $\tilde{r}$ is the size of minors in $\tilde{B}$. 
\end{theorem}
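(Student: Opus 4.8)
The plan is to prove the two directions separately, treating the anti-diagonal case in full (the diagonal case being entirely symmetric, obtained by replacing ``decreasing'' with ``increasing'' throughout). Fix the block $\tilde{B} \in \p{B}\setminus\{B\}$ and write $\tilde{r}$ for the size of the minors in $\tilde{B}$. Recall that $\p{t}$ has the two-row array representation \eqref{eq:two-row}, and that a term $\p{t}'$ divides $\p{t}$ precisely when the two-row array of $\p{t}'$ is a sub-array of that of $\p{t}$; moreover, by Lemma~\ref{lem:leadingTerm}, a product of $\tilde{r}$ distinct-row-distinct-column variables is the anti-diagonal leading term of some $\tilde{r}$-minor of $X$ if and only if its second row is strictly decreasing.

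For the ``if'' direction, suppose $l_a(\p{t}, \tilde{B}) \geq \tilde{r}$. By definition, $s(\p{t}\cap\tilde{B})$ contains a decreasing subsequence of length $\geq \tilde{r}$; pick any $\tilde{r}$ columns of $\p{t}\cap\tilde{B}$, say indexed by rows $i_{k_1} < \cdots < i_{k_{\tilde{r}}}$ with $j_{k_1} > \cdots > j_{k_{\tilde{r}}}$, realizing such a subsequence. These $\tilde{r}$ variables $x_{i_{k_1}j_{k_1}}, \ldots, x_{i_{k_{\tilde{r}}}j_{k_{\tilde{r}}}}$ all lie in $\tilde{B}$ (that is what $\p{t}\cap\tilde{B}$ means), they occupy $\tilde{r}$ distinct rows and $\tilde{r}$ distinct columns, and their row-index set $R$ and column-index set $C$ determine a submatrix; one must check that this submatrix lies entirely inside $\tilde{B}$. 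Here is where the hypothesis that $\tilde{B}$ is anti-diagonal enters: since the selected variables lie in $\tilde{B}$ and $\tilde{B}$ is anti-diagonal, consecutive pairs $x_{i_{k_s}j_{k_s}}, x_{i_{k_{s+1}}j_{k_{s+1}}}$ (with $i_{k_s} < i_{k_{s+1}}$ and $j_{k_s} > j_{k_{s+1}}$) force the whole rectangle between them into $\tilde{B}$, and chaining these rectangles shows the entire $R\times C$ submatrix sits in $\tilde{B}$. Hence there is a genuine $\tilde{r}$-minor $\p{m}$ of $X$ with all entries in $\tilde{B}$, and the product $\prod_s x_{i_{k_s}j_{k_s}}$ is its anti-diagonal leading term by Lemma~\ref{lem:leadingTerm}; this leading term divides $\p{t}$ because its two-row array is a sub-array of that of $\p{t}$.

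For the ``only if'' direction, suppose some $\tilde{r}$-minor $\p{m}$ in $\tilde{B}$ has $\lt_a(\p{m}) \mid \p{t}$. Then the two-row array of $\lt_a(\p{m})$ is a sub-array of that of $\p{t}$; its columns are $\tilde{r}$ columns of $\p{t}$, all of whose variables lie in $\tilde{B}$ (since $\p{m}$'s entries are in $\tilde{B}$), hence these columns survive in $\p{t}\cap\tilde{B}$. By Lemma~\ref{lem:leadingTerm}, the second row of $\lt_a(\p{m})$ is strictly decreasing, so $s(\p{t}\cap\tilde{B})$ contains a decreasing subsequence of length $\tilde{r}$, giving $l_a(\p{t},\tilde{B}) \geq \tilde{r}$. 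Finally, the diagonal case follows verbatim by swapping ``anti-diagonal'' for ``diagonal,'' ``decreasing'' for ``increasing,'' and invoking the diagonal halves of Lemma~\ref{lem:leadingTerm} and of the definition of an anti-diagonal/diagonal block.

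The main obstacle is the rectangle-filling argument in the ``if'' direction: one must be careful that the $\tilde{r}$ chosen variables, which individually lie in $\tilde{B}$, actually span a full submatrix contained in $\tilde{B}$, so that a legitimate minor (all of whose entries are variables, not forced zeros or entries outside the block) exists. This is exactly what the anti-diagonal (resp. diagonal) shape of the block guarantees, via repeated application of the defining closure property to consecutive pairs of the selected variables ordered along the decreasing (resp. increasing) subsequence. Everything else is bookkeeping with the two-row array representation and direct appeals to Lemma~\ref{lem:leadingTerm}.
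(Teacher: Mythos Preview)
Your approach is essentially the same as the paper's, but there is a gap in your ``if'' direction. You argue that applying the anti-diagonal closure property to \emph{consecutive} pairs $x_{i_{k_s}j_{k_s}}, x_{i_{k_{s+1}}j_{k_{s+1}}}$ and then ``chaining these rectangles'' covers the entire $R\times C$ submatrix. This is false: with $\tilde r=3$ and selected entries at positions $(1,5),(3,3),(5,1)$, the two consecutive-pair rectangles are $[1,3]\times[3,5]$ and $[3,5]\times[1,3]$, whose union misses, for instance, the entry at $(1,1)$, which belongs to the $R\times C$ submatrix. The correct (and simpler) argument, which the paper uses, applies the anti-diagonal property \emph{once} to the two extreme selected variables $x_{i_{k_1}j_{k_1}}$ (the northeast corner of the bounding box) and $x_{i_{k_{\tilde r}}j_{k_{\tilde r}}}$ (the southwest corner): since both lie in $\tilde B$ and $\tilde B$ is anti-diagonal, the entire rectangle $[i_{k_1},i_{k_{\tilde r}}]\times[j_{k_{\tilde r}},j_{k_1}]$ lies in $\tilde B$, and in particular so does the $R\times C$ submatrix. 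The remainder of your proof (the ``only if'' direction and the reduction of the diagonal case to the anti-diagonal one) matches the paper.
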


\begin{proof} We only prove the theorem for the anti-diagonal term order, and the proof for the diagonal one is exactly the same. 
  
  $\Longleftarrow)$ Let $\p{s}$ be the longest decreasing subsequence of $s(\p{t} \cap \tilde{B})$. Then its length equals $l_a(\p{t}, \tilde{B}) \geq \tilde{r}$. Take the first $\tilde{r}$ elements of $\p{s}$ to form a decreasing sequence $\tilde{\p{s}}$ of length $\tilde{r}$. For each $k=1, \ldots, \tilde{r}$, let $p_k$ be the position of $\tilde{\p{s}}(k)$ in $s(\p{t} \cap \tilde{B})$. Then from the construction of $\p{t} \cap \tilde{B}$, we know that all the variables $x_{f(\p{t} \cap \tilde{B})(p_k), s(\p{t} \cap \tilde{B})(p_k)}$ are in $\tilde{B}$ for $k=1, \ldots, \tilde{r}$. Consider the minor $\p{m}$ with $R(\p{m}) = \{f(\p{t} \cap \tilde{B})(p_k): k \in [\tilde{r}]\}$ and $C(\p{m}) = \{s(\p{t} \cap \tilde{B})(p_k): k \in [\tilde{r}] \}$. Since $x_{f(\p{t} \cap \tilde{B})(p_1), s(\p{t} \cap \tilde{B})(p_1)}$ and $x_{f(\p{t} \cap \tilde{B})(p_{\tilde{r}}), s(\p{t} \cap \tilde{B})(p_{\tilde{r}})}$ are both in the anti-diagonal block $\tilde{B}$, we know that $\p{m}$ is an $\tilde{r}$-minor in $\tilde{B}$ and $\lt(\p{m})$ clearly divides $\p{t}$. 

  $\Longrightarrow)$ Write
  \begin{equation*}
  \lt(\p{m}) = \left(
    \begin{tabular}[c]{cccc}
      $i_1$ & $i_2$ & $\cdots$ & $i_{\tilde{r}}$ \\
      $j_1$ & $j_2$ & $\cdots$ & $j_{\tilde{r}}$
    \end{tabular}
    \right)
  \end{equation*}
  in its two-row array representation. Since $\p{m}$ is a minor in $\tilde{B}$, we know that each $x_{i_k, j_k} \in \tilde{B}$ for $k \in [\tilde{r}]$. With $\lt(\p{m}) | \p{t}$, $x_{i_k, j_k}$ also appears in $\p{t}$, and thus $\lt(\p{m})$ is a sub-array of $\p{t} \cap \tilde{B}$. By Lemma~\ref{lem:leadingTerm} we know that $s(\lt(\p{m}))$ is a decreasing sequence of length $\tilde{r}$, and thus $l_a(\p{t}, \tilde{B}) \geq \tilde{r}$. 
\end{proof}

\begin{example}\rm
  Consider a $5$-minor $\p{m}$ with $R(\p{m}) =  \{5, 7, 8, 10, 12\}$ and $C(\p{m}) = \{2, 4, 6, 8, 10\}$ and its term
  \begin{equation*}
  \p{t} = \left(
    \begin{tabular}[c]{ccccc}
      $5$ & $7$ & $8$ & $10$ & $12$\\
      $2$ & $4$ & $8$ & $10$ & $6$\\
    \end{tabular}
    \right).
  \end{equation*}
  The entries of $\p{m}$ are marked with $*$ and those of $\p{t}$ are in red in Figure~\ref{fig:length} below. Let $B_1$ be a diagonal block illustrated in the left subfigre and the minors in $B_1$ are of size $r_1=3$. One can see that the diagonal length $l_d(\p{t}, B_1) = 3 \geq r_1$ with the entries at $(7,4)$, $(8,8)$, and $(10,10)$ lying in $B_1$. Then the minor $\p{m} = (\{7,8,10\}, \{4, 8, 10\})$ is contained in $B_1$ and satisfies the divisibility $\lt(\p{m}) | \p{t}$.

  However, if we consider the non-diagonal subset $B_2$ in the right subfigure with $r_2 = 3$. Then even with the diagonal length $l_d(\p{t}, B_2) = 3 \geq r_2$ too, the corresponding minor $\p{m}$ does not lie in $B_2$ (with $x_{10,4}$ missing), and thus there is no $3$-minor in $B_2$ whose leading term divides $\p{t}$. This explains why we need to restrict ourselves to (anti-)diagonal subsets to consider blockwise determinantal ideals. 
\end{example}

  \begin{figure}[h]
    \centering
            \begin{subfigure}{.48\textwidth}
      \centering

      \begin{tikzpicture}
        \begin{scope}[scale =0.33]
                \draw[help lines,color=gray!20] (-5,1) grid (12,15);
            \draw[black] (0,1)--(10,1)--(10,8)--(10,12)--(12,12)--(12,15)--
                (2,15)--(2,13)--(-2,13)--(-2,8)--(-5,8)--(-5,1)--(0,1);
                
                \filldraw[red!20] (-3.97,10.03) rectangle (-3.03,10.97);
                \filldraw[red!20] (-1.97,8.03) rectangle (-1.03,8.97);
                \filldraw[red!20] (0.03,3.03) rectangle (0.97,3.97);
                \filldraw[red!20] (2.03,7.03) rectangle (2.97,7.97);
                \filldraw[red!20] (4.03,5.03) rectangle (4.97,5.97);

                \node[black] at (-3.5,3.5){$\ast$};
                \node[black] at (-3.5,5.5){$\ast$};
                \node[black] at (-3.5,7.5){$\ast$};
                \node[black] at (-3.5,8.5){$\ast$};
                \node[black] at (-3.5,10.5){$\ast$};
                \node[black] at (-1.5,3.5){$\ast$};
                \node[black] at (-1.5,5.5){$\ast$};
                \node[black] at (-1.5,7.5){$\ast$};
                \node[black] at (-1.5,8.5){$\ast$};
                \node[black] at (-1.5,10.5){$\ast$};
                \node[black] at (0.5,3.5){$\ast$};
                \node[black] at (0.5,5.5){$\ast$};
                \node[black] at (0.5,7.5){$\ast$};
                \node[black] at (0.5,8.5){$\ast$};
                \node[black] at (0.5,10.5){$\ast$};
                \node[black] at (2.5,3.5){$\ast$};
                \node[black] at (2.5,5.5){$\ast$};
                \node[black] at (2.5,7.5){$\ast$};
                \node[black] at (2.5,8.5){$\ast$};
                \node[black] at (2.5,10.5){$\ast$};
                \node[black] at (4.5,3.5){$\ast$};
                \node[black] at (4.5,5.5){$\ast$};
                \node[black] at (4.5,7.5){$\ast$};
                \node[black] at (4.5,8.5){$\ast$};
                \node[black] at (4.5,10.5){$\ast$};
                
            \end{scope}
          \end{tikzpicture}
        \end{subfigure}
            \begin{subfigure}{.48\textwidth}
      \centering

      \begin{tikzpicture}
           \begin{scope}[scale=0.33]
                \draw[help lines,color=gray!20] (-5,1) grid (12,15);
                \draw[black] (0,1)--(10,1)--(10,8)--(8,8)--(8,12)--(12,12)--(12,15)--
                (2,15)--(2,13)--(-2,13)--(-2,8)--(-5,8)--(-5,6)--(0,6)--(0,1);

                \filldraw[red!20] (-3.97,10.03) rectangle (-3.03,10.97);
                \filldraw[red!20] (-1.97,8.03) rectangle (-1.03,8.97);
                \filldraw[red!20] (0.03,3.03) rectangle (0.97,3.97);
                \filldraw[red!20] (2.03,7.03) rectangle (2.97,7.97);
                \filldraw[red!20] (4.03,5.03) rectangle (4.97,5.97);
                
                \node[black] at (-3.5,3.5){$\ast$};
                \node[black] at (-3.5,5.5){$\ast$};
                \node[black] at (-3.5,7.5){$\ast$};
                \node[black] at (-3.5,8.5){$\ast$};
                \node[black] at (-3.5,10.5){$\ast$};
                \node[black] at (-1.5,3.5){$\ast$};
                \node[black] at (-1.5,5.5){$\ast$};
                \node[black] at (-1.5,7.5){$\ast$};
                \node[black] at (-1.5,8.5){$\ast$};
                \node[black] at (-1.5,10.5){$\ast$};
                \node[black] at (0.5,3.5){$\ast$};
                \node[black] at (0.5,5.5){$\ast$};
                \node[black] at (0.5,7.5){$\ast$};
                \node[black] at (0.5,8.5){$\ast$};
                \node[black] at (0.5,10.5){$\ast$};
                \node[black] at (2.5,3.5){$\ast$};
                \node[black] at (2.5,5.5){$\ast$};
                \node[black] at (2.5,7.5){$\ast$};
                \node[black] at (2.5,8.5){$\ast$};
                \node[black] at (2.5,10.5){$\ast$};
                \node[black] at (4.5,3.5){$\ast$};
                \node[black] at (4.5,5.5){$\ast$};
                \node[black] at (4.5,7.5){$\ast$};
                \node[black] at (4.5,8.5){$\ast$};
                \node[black] at (4.5,10.5){$\ast$};
            
            \end{scope}
          \end{tikzpicture}
                  \end{subfigure}

          \caption{Diagonal length and divisibily by leading terms of minors}\label{fig:length}
\end{figure}
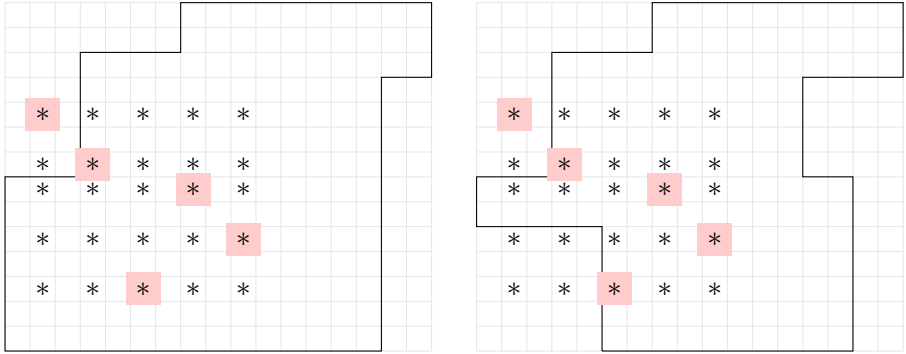

\subsection{Blockwise determinantal ideals}
\label{sec:gb-block}

Let $\p{m}_1$ and $\p{m}_2$ be two distinct minors in $X$. Then we say that $\p{m}_2$ is \emph{contained in} $\p{m}_1$ if $R(\p{m}_2) \subseteq R(\p{m}_1)$ and $C(\p{m}_2) \subseteq C(\p{m}_1)$. It is not hard to have the following criterion for testing reducibility between minors.

\begin{proposition}\label{prop:reducible}
Let $\p{m}_1$ and $\p{m}_2$ be two distinct minors in $X$. Then $\p{m}_1$ is reducible modulo $\p{m}_2$ if and only if $\p{m}_2$ is contained in $\p{m}_1$. 
\end{proposition}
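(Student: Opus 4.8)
The plan is to reduce everything to the combinatorics of the two-row array representation introduced before Lemma~\ref{lem:leadingTerm}. First I would recall that, since the entries of $X$ are distinct variables, the Leibniz expansion of any minor $\p{m}$ has no cancellation: $\term(\p{m})$ is exactly the set of squarefree monomials $\prod_{i\in R(\p{m})} x_{i,\sigma(i)}$ as $\sigma$ ranges over the bijections $R(\p{m})\to C(\p{m})$, each occurring with coefficient $\pm 1$. Equivalently, in two-row array form a term of $\p{m}$ is any array whose first row lists $R(\p{m})$ increasingly and whose second row is an arbitrary arrangement of $C(\p{m})$. I would also use the fact, already noted before Lemma~\ref{lem:leadingTerm}, that a term $\p{t}'$ divides a term $\p{t}$ if and only if the two-row array of $\p{t}'$ is a sub-array of that of $\p{t}$. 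With this setup, $\p{m}_1$ is reducible modulo $\p{m}_2$ precisely when some term of $\p{m}_1$ has a two-row array containing the array of $\lt(\p{m}_2)$ as a sub-array.

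For the ``if'' direction, assume $R(\p{m}_2)\subseteq R(\p{m}_1)$ and $C(\p{m}_2)\subseteq C(\p{m}_1)$. Let $\tau\colon R(\p{m}_2)\to C(\p{m}_2)$ be the bijection whose graph is the (anti-)diagonal of $\p{m}_2$, so that $\lt(\p{m}_2)=\prod_{i\in R(\p{m}_2)} x_{i,\tau(i)}$. Since $R(\p{m}_1)\setminus R(\p{m}_2)$ and $C(\p{m}_1)\setminus C(\p{m}_2)$ have the same cardinality, $\tau$ extends to a bijection $\sigma\colon R(\p{m}_1)\to C(\p{m}_1)$; the associated monomial $\p{t}=\prod_{i\in R(\p{m}_1)}x_{i,\sigma(i)}$ is a genuine term of $\p{m}_1$, and restricting its array to the rows of $R(\p{m}_2)$ recovers the array of $\lt(\p{m}_2)$. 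Hence $\lt(\p{m}_2)\mid\p{t}$ and $\p{m}_1$ is reducible modulo $\p{m}_2$.

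For the ``only if'' direction, suppose $\p{t}\in\term(\p{m}_1)$ satisfies $\lt(\p{m}_2)\mid\p{t}$; then the two-row array of $\lt(\p{m}_2)$ is a sub-array of that of $\p{t}$. Comparing first rows gives $R(\p{m}_2)\subseteq R(\p{m}_1)$, because the first row of $\p{t}$'s array is exactly $R(\p{m}_1)$; comparing second rows as index sets gives $C(\p{m}_2)\subseteq C(\p{m}_1)$, because the second row of $\p{t}$'s array is a rearrangement of $C(\p{m}_1)$. Therefore $\p{m}_2$ is contained in $\p{m}_1$, which completes the equivalence.

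The argument is essentially bookkeeping, so I do not expect a serious obstacle; the one point demanding care is the no-cancellation claim that pins down $\term(\p{m})$ exactly, since without the genericity of $X$ a monomial appearing in the Leibniz expansion could vanish, and this is precisely why the statement is about minors of a generic matrix. A secondary, purely cosmetic point is that the two cases of (anti-)diagonal term orders need not be separated: the ``if'' direction never uses which bijection $\tau$ is (any term of $\p{m}_2$ works, in particular $\lt(\p{m}_2)$), and the ``only if'' direction only uses that $\lt(\p{m}_2)$ is some squarefree monomial supported on $R(\p{m}_2)\times C(\p{m}_2)$ with pairwise distinct rows and columns.
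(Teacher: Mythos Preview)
Your proposal is correct. The paper in fact omits the proof of this proposition entirely, remarking only that ``it is not hard to have the following criterion for testing reducibility between minors,'' so your argument is a complete and correct fill-in of what the paper leaves implicit; both directions are handled cleanly via the two-row array description, and your side remarks about no cancellation in the Leibniz expansion and about the irrelevance of which (anti-)diagonal order is used are apt.
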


The following results are all presented w.r.t. the diagonal term order for diagonal blockwise determinantal ideals, but indeed they also hold w.r.t. any anti-diagonal term order for anti-diagonal ones. 

\begin{proposition}\label{prop:min-reduced}
Suppose that the generating set $\pset{G}$ of a diagonal blockwise determinantal ideal $I(\p{B}, \p{r})$ is a \grobner basis. For any $r$-minor $\p{m} \in \pset{G}$, let $\p{B}_{\p{m}} = \{B_i \in \p{B}: r_i = \deg(\p{m})\}$. Then the following statements hold:
  \begin{enumerate}
  \item[(1)] If for each $\p{m} \in \pset{G}$, $l(\lt(\p{m}), \tilde{B}) < \tilde{r}$ for any $\tilde{B} \in \p{B} \setminus \p{B}_{\p{m}}$, where $\tilde{r}$ is the size of minors in $\tilde{B}$, then $\pset{G}$ is a minimal \grobner basis of $I(\p{B}, \p{r})$. 
  \item[(2)] If for each $\p{m} \in \pset{G}$ and for any term $\p{t}$ of $\p{m}$, we have $l(\p{t}, \tilde{B}) < \tilde{r}$ for any $\tilde{B} \in \p{B} \setminus \p{B}_{\p{m}}$, then $\pset{G}$ is the reduced \grobner basis of $I(\p{B}, \p{r})$. 
  \end{enumerate}
\end{proposition}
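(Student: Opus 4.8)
The plan is to derive both statements directly from Theorem~\ref{thm:divisible}, which already translates the combinatorial length condition into a divisibility statement about leading terms of minors. Since $\pset{G}$ is assumed to be a \grobner basis of $I(\p{B}, \p{r})$, in each case it only remains to verify the extra combinatorial hypotheses in Definitions~\ref{def:gb} and~\ref{def:reducedGB}: that no leading term of a minor in $\pset{G}$ is divisible by the leading term of another minor (for minimality), and additionally that no non-leading term of a minor is divisible by the leading term of another minor (for being reduced). Throughout, recall that every minor of $X$ has leading coefficient $\pm 1$, so the condition $\lc(\p{m}) = \pm 1$ is automatic and need not be checked.

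For part~(1), I would fix two distinct minors $\p{m}, \p{m}' \in \pset{G}$ and show that $\lt(\p{m}')$ does not divide $\lt(\p{m})$. If $\p{m}'$ lies in a block $\tilde{B} \in \p{B}_{\p{m}}$, then $\p{m}$ and $\p{m}'$ have the same size; since they are distinct, neither $R(\p{m}') \subseteq R(\p{m})$ with $C(\p{m}') \subseteq C(\p{m})$ can hold, so by Proposition~\ref{prop:reducible} $\p{m}$ is not reducible modulo $\p{m}'$, i.e.\ $\lt(\p{m}')$ does not divide any term of $\p{m}$, in particular not $\lt(\p{m})$. (Here one uses that both minors are determinants of submatrices of the \emph{same} size, so the only way one leading term divides the other is full containment of index sets.) If instead $\p{m}'$ lies in a block $\tilde{B} \notin \p{B}_{\p{m}}$ with minor size $\tilde{r}$, then by Theorem~\ref{thm:divisible} applied to the term $\p{t} = \lt(\p{m})$, the existence of a minor $\p{m}'$ in $\tilde{B}$ with $\lt(\p{m}') \mid \lt(\p{m})$ would force $l(\lt(\p{m}), \tilde{B}) \geq \tilde{r}$, contradicting the hypothesis. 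Hence $\lt(\p{m})$ is not divisible by the leading term of any other element of $\pset{G}$, so $\pset{G}$ is minimal.

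For part~(2), I would argue that each $\p{m} \in \pset{G}$ is reduced modulo $\pset{G} \setminus \{\p{m}\}$; since by the same containment argument as above no minor of the same size can contribute a dividing leading term, and since a reduced \grobner basis is in particular minimal, the minimality already follows from part~(1) once we handle the terms. Fix $\p{m} \in \pset{G}$ and an arbitrary term $\p{t} \in \term(\p{m})$, and fix another $\p{m}' \in \pset{G}$. If $\p{m}'$ is in a block of $\p{B}_{\p{m}}$, then $\p{t}$ is a product of $r$ variables from $r$ distinct rows and columns and $\p{m}'$ has size $r$, so $\lt(\p{m}') \mid \p{t}$ would require $R(\p{m}')$ and $C(\p{m}')$ to be exactly the row/column sets of $\p{t}$, which are $R(\p{m})$ and $C(\p{m})$; that gives $\p{m}' = \p{m}$, a contradiction. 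If $\p{m}'$ is in a block $\tilde{B} \notin \p{B}_{\p{m}}$ of minor size $\tilde{r}$, then $\lt(\p{m}') \mid \p{t}$ would, by the $\Longrightarrow$ direction of Theorem~\ref{thm:divisible} (whose proof only uses that $\lt(\p{m}')$ is a sub-array of $\p{t} \cap \tilde{B}$ with decreasing second row), give $l(\p{t}, \tilde{B}) \geq \tilde{r}$, against the hypothesis. So no term of $\p{m}$ is divisible by the leading term of any other element of $\pset{G}$, meaning $\p{m}$ is reduced modulo $\pset{G}\setminus\{\p{m}\}$; together with $\lc(\p{m}) = \pm 1$, this shows $\pset{G}$ is the reduced \grobner basis.

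The only subtle point, and the one I would be most careful about, is the same-size case $\tilde{B} \in \p{B}_{\p{m}}$: Theorem~\ref{thm:divisible} is stated for $\tilde{B} \in \p{B} \setminus \{B\}$ with \emph{no} constraint relating $\tilde{r}$ to $r$, but the hypotheses of Proposition~\ref{prop:min-reduced} only control lengths in blocks of $\p{B} \setminus \p{B}_{\p{m}}$, so the same-size blocks must be dispatched separately via Proposition~\ref{prop:reducible} and the elementary observation that two distinct equal-size minors cannot have one contained in the other. I also need to note that $\p{B}_{\p{m}}$ may contain the block $B$ that $\p{m}$ itself comes from, and possibly several blocks of the same minor size $\deg(\p{m})$; the argument above covers all of them uniformly since it only uses that $\p{m}'$ and $\p{m}$ share the same size, not that they lie in the same block. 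Finally, everything said here is symmetric between the diagonal and anti-diagonal settings because Theorem~\ref{thm:divisible}, Lemma~\ref{lem:leadingTerm}, and Proposition~\ref{prop:reducible} are all stated in both forms, so the proof for anti-diagonal blockwise determinantal ideals w.r.t.\ anti-diagonal term orders is verbatim the same.
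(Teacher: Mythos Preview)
Your proof is correct and follows essentially the same approach as the paper: split into the same-size case ($\tilde{B}\in\p{B}_{\p{m}}$) and the different-size case ($\tilde{B}\in\p{B}\setminus\p{B}_{\p{m}}$), handle the latter via Theorem~\ref{thm:divisible}, and dispatch the former by the elementary observation that for two minors of the same size $r$, divisibility $\lt(\p{m}')\mid\p{t}$ forces equality of the row/column index sets. The only cosmetic difference is that in part~(1) you route the same-size case through Proposition~\ref{prop:reducible}, whereas the paper argues directly via Lemma~\ref{lem:leadingTerm} (if $\lt(\tilde{\p{m}})=\p{t}$ then $s(\p{t})$ is monotone of length $r$, so $\p{t}=\lt(\p{m})$, contradicting that distinct $r$-minors have distinct leading terms); both arguments are available and equivalent here.
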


\begin{proof}

  Let $\p{m}$ be an $r$-minor and $\tilde{\p{m}}$ be an arbitrary $r$-minor in some $B \in \p{B}_{\p{m}}$ distinct from $\p{m}$. Then for each term $\p{t}$ of $\p{m}$, we claim that $\lt(\tilde{\p{m}})$ and $\p{t}$ are distinct. This is because otherwise if $\lt(\tilde{\p{m}}) = \p{t}$, then $s(\p{t})$ is a decreasing sequence of length $r$, and thus by Lemma~\ref{lem:leadingTerm} we have $\p{t} = \lt(\p{m})$. Since different $r$-minors have different leading terms, we have $\lt(\tilde{\p{m}}) \neq \lt(\p{m})$, and thus $\lt(\tilde{\p{m}})$ does not divide any term $\p{t}$ of $\p{m}$.

  For the remaining case when $\tilde{\p{m}}$ is a minor from other $\tilde{B} \in \p{B} \setminus \p{B}_{\p{m}}$, by Theorem~\ref{thm:divisible} we know that in (1) $\lt(\tilde{\p{m}})$ does not divide $\lt(\p{m})$ and that in (2) $\lt(\tilde{\p{m}})$ does not divide any term $\p{t}$ of $\p{m}$ either. To summarize, $\pset{G}$ is a minimal \grobner basis of $I(\p{B}, \p{r})$ in (1) and is the reduced one in (2).
\end{proof}

Let $\p{m}$ be a minor generator from $B \in \p{B}$ and $\tilde{B} \neq B$. We use $\p{m} \cap \tilde{B}$ to denote the intersection of the submatrix corresponding to $\p{m}$ and the block $\tilde{B}$ and $R(\p{m} \cap \tilde{B}) := \{i \in [m]: \exists x_{ij} \in \p{m} \cap \tilde{B}\}$. $C(\p{m} \cap \tilde{B})$ can be similarly defined. Clearly for any term $\p{t}$ of $\p{m}$, we have $l(\p{t}, \tilde{B}) \leq \#R(\p{m} \cap \tilde{B})$ and $l(\p{t}, \tilde{B}) \leq \#C(\p{m} \cap \tilde{B})$. That is, the length of any term of $\p{m}$ in $\tilde{B}$ is bounded by the number of rows (or columns) of the submatrix of $\p{m}$ appearing in $\tilde{B}$. Note that a row (or column) appearing in $\tilde{B}$ does not necessarily means that the corresponding full row (or column) belongs to $\tilde{B}$. With these straightforward inequalities, we have the following sufficient conditions for a \grobner basis of a diagonal blockwise determinantal ideal to be the reduced one.

\begin{corollary}\label{cor:fewerRows}
Suppose that the generating set $\pset{G}$ of a diagonal blockwise determinantal ideal $I(\p{B}, \p{r})$ is a \grobner basis. If for each $\p{m} \in \pset{G}$, $\#R(\p{m} \cap \tilde{B}) < \tilde{r}$ or $\#C(\p{m} \cap \tilde{B}) < \tilde{r}$ for any $\tilde{B} \in \p{B} \setminus \p{B}_{\p{m}}$, where $\p{B}_{\p{m}} = \{B_i \in \p{B}: r_i = \deg(\p{m})\}$, then $\pset{G}$ is the reduced \grobner basis of $I(\p{B}, \p{r})$.
\end{corollary}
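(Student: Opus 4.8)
The plan is to derive this as an immediate consequence of part~(2) of Proposition~\ref{prop:min-reduced}, using the elementary length bounds recorded in the paragraph preceding the corollary. First I would recall that for any minor generator $\p{m}$ from a block $B\in\p{B}$, for any term $\p{t}$ of $\p{m}$, and for any block $\tilde{B}\in\p{B}\setminus\{B\}$, one has $l(\p{t},\tilde{B})\le\#R(\p{m}\cap\tilde{B})$ and $l(\p{t},\tilde{B})\le\#C(\p{m}\cap\tilde{B})$; this holds because $\p{t}\cap\tilde{B}$ is a subarray of the two-row array of $\p{t}$ whose entries already come from the $\#R(\p{m}\cap\tilde{B})$ rows and the $\#C(\p{m}\cap\tilde{B})$ columns of the submatrix of $\p{m}$ meeting $\tilde{B}$, and a length-$\ell$ subarray of $\p{t}\cap\tilde{B}$ (in particular one realizing a longest decreasing, resp. increasing, subsequence of $s(\p{t}\cap\tilde{B})$) forces $\ell$ distinct rows and $\ell$ distinct columns.

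Next I would fix an arbitrary $\p{m}\in\pset{G}$, an arbitrary term $\p{t}$ of $\p{m}$, and an arbitrary $\tilde{B}\in\p{B}\setminus\p{B}_{\p{m}}$ of minor size $\tilde{r}$. By the hypothesis of the corollary, $\#R(\p{m}\cap\tilde{B})<\tilde{r}$ or $\#C(\p{m}\cap\tilde{B})<\tilde{r}$; in either case the two bounds above give $l(\p{t},\tilde{B})<\tilde{r}$. Since $\p{m}$, $\p{t}$, and $\tilde{B}$ are arbitrary, the hypothesis of Proposition~\ref{prop:min-reduced}(2) is satisfied, and hence $\pset{G}$ is the reduced \grobner basis of $I(\p{B},\p{r})$. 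The anti-diagonal case is handled verbatim by replacing $l_d$ with $l_a$ (and ``increasing'' by ``decreasing'') throughout, using the anti-diagonal part of Theorem~\ref{thm:divisible}.

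I do not expect any genuine obstacle here: essentially all the content is already packaged inside Proposition~\ref{prop:min-reduced}(2), which itself rests on the divisibility criterion Theorem~\ref{thm:divisible}. The only step needing a moment's care is the \emph{double} bound on $l(\p{t},\tilde{B})$ by both $\#R(\p{m}\cap\tilde{B})$ and $\#C(\p{m}\cap\tilde{B})$ — the key point being that one may invoke whichever of the two happens to be $<\tilde{r}$, which is precisely what makes the hypothesis of this corollary weaker, and more readily checkable in practice, than the length condition of Proposition~\ref{prop:min-reduced}(2).
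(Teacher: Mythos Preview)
Your proposal is correct and follows exactly the route the paper takes: the paper states the bounds $l(\p{t},\tilde{B})\le\#R(\p{m}\cap\tilde{B})$ and $l(\p{t},\tilde{B})\le\#C(\p{m}\cap\tilde{B})$ in the paragraph immediately preceding the corollary and then presents Corollary~\ref{cor:fewerRows} as a direct consequence of Proposition~\ref{prop:min-reduced}(2), without even writing out a separate proof. Your write-up simply makes this explicit.
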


Next we consider the special case when a minor $\p{m}$ in $B$ intersects another block $\tilde{B} \neq B$ with consecutive full rows or columns of $\p{m}$. 

\begin{proposition}\label{prop:fullRows}
  Let $\p{m}$ be a minor generator of a diagonal blockwise determinantal ideal $I(\p{B}, \p{r})$ such that $\p{m} \cap \tilde{B}$ consists of consecutive full rows or columns of the submatrix of $\p{m}$ for any $\tilde{B} \in \p{B} \setminus \p{B}_{\p{m}}$, where $\p{B}_{\p{m}} = \{B_i \in \p{B}: r_i = \deg(\p{m})\}$. Then for any $\tilde{\p{m}}$ in $\tilde{B}$, if $\lt(\tilde{\p{m}})\nmid \lt(\p{m})$, then $\p{m}$ is reduced modulo $\tilde{\p{m}}$, namely $\lt(\tilde{\p{m}}) \nmid  \p{t}$ for each term $\p{t}$ of $\p{m}$. 
\end{proposition}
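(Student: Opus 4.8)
The plan is to argue by contrapositive. Assume $\p{m}$ is \emph{not} reduced modulo $\tilde{\p{m}}$, i.e.\ $\lt(\tilde{\p{m}})\mid\p{t}$ for some term $\p{t}$ of $\p{m}$, and deduce $\lt(\tilde{\p{m}})\mid\lt(\p{m})$. Write $r=\deg(\p{m})$ and $\tilde r=\deg(\tilde{\p{m}})$; since $\tilde B\notin\p{B}_{\p{m}}$ we have $\tilde r\neq r$, and the case $\tilde r>r$ is immediate because then $\lt(\tilde{\p{m}})$ has higher degree than every term of $\p{m}$ and cannot divide any of them, so we may assume $\tilde r<r$. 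As transposing $X$ swaps the two alternatives in the hypothesis, it suffices to treat the case where $\p{m}\cap\tilde{B}$ consists of consecutive full rows of the submatrix of $\p{m}$.

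Next I would set up coordinates and localize $\tilde{\p{m}}$ inside $\p{m}$. By Proposition~\ref{prop:reducible}, the failure of $\p{m}$ to be reduced modulo $\tilde{\p{m}}$ means exactly that $\tilde{\p{m}}$ is contained in $\p{m}$, i.e.\ $R(\tilde{\p{m}})\subseteq R(\p{m})$ and $C(\tilde{\p{m}})\subseteq C(\p{m})$, which we now assume. Put $R(\p{m})=\{a_1<\cdots<a_r\}$ and $C(\p{m})=\{b_1<\cdots<b_r\}$, so that for the diagonal term order $\lt(\p{m})=\prod_{k=1}^{r}x_{a_kb_k}$, and let $a_s,a_{s+1},\dots,a_t$ be the rows of $\p{m}$ lying in $\tilde B$. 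Every entry of $\tilde{\p{m}}$ lies in $\tilde B$ (it is a minor in $\tilde B$) and in the submatrix of $\p{m}$ (containment), hence in $\p{m}\cap\tilde B$, which by hypothesis is precisely the set of entries in the rows $a_s,\dots,a_t$. Therefore $R(\tilde{\p{m}})\subseteq\{a_s,\dots,a_t\}$; write $R(\tilde{\p{m}})=\{a_j:j\in J\}$ and $C(\tilde{\p{m}})=\{b_j:j\in J'\}$ with $J\subseteq\{s,\dots,t\}$ and $J'\subseteq\{1,\dots,r\}$. By Lemma~\ref{lem:leadingTerm} the monomial $\lt(\tilde{\p{m}})$ is the diagonal of this subminor, i.e.\ pairing the elements of $J$ and of $J'$ in increasing order yields $\lt(\tilde{\p{m}})=\prod_{\ell}x_{a_{j_\ell}b_{j'_\ell}}$; since $\lt(\p{m})$ is squarefree, $\lt(\tilde{\p{m}})\mid\lt(\p{m})$ holds if and only if $J'=J$.

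So the whole proof reduces to establishing $J'=J$, and this is the step I expect to be the main obstacle: it is the one place where the combinatorics of the diagonal term order genuinely has to be reconciled with the block structure, using simultaneously that the band of rows $a_s,\dots,a_t$ is \emph{full} in $\tilde B$, that it is \emph{consecutive} inside $\p{m}$, and that both $\lt(\p{m})$ and $\lt(\tilde{\p{m}})$ are diagonal leading terms (for an arbitrary, non-contiguous choice of rows the identity $J'=J$ can break). I would attempt it by induction on $\tilde r$: strip off the northwest-most variable of $\lt(\tilde{\p{m}})$ together with its row and column, check that the remaining subminor still sits in a contiguous band of full rows of a smaller minor of $\p{m}$ and that its diagonal is again the corresponding leading term, so that the inductive hypothesis forces the remaining row indices and column indices to coincide, and then argue that the stripped-off entry must also be diagonal in $\p{m}$. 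Finally, the case where $\p{m}\cap\tilde B$ consists of consecutive full columns is obtained verbatim after transposing $X$.
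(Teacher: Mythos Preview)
Your contrapositive setup and the use of Proposition~\ref{prop:reducible} to pass to $R(\tilde{\p{m}})\subseteq R(\p{m})$, $C(\tilde{\p{m}})\subseteq C(\p{m})$ are fine, and your reduction of the question to the identity $J=J'$ is correct. The problem is that $J=J'$ is \emph{false} in general, so the induction you sketch cannot succeed. Take $\p{m}=(\{1,2,3\},\{1,2,3\})$, let $\tilde B$ consist of all entries of $X$ in rows $1$ and $2$ (so $\p{m}\cap\tilde B$ is two consecutive full rows of $\p{m}$), set $\tilde r=1$ and $\tilde{\p{m}}=x_{13}$. Then $J=\{1\}$, $J'=\{3\}$, $\lt(\tilde{\p{m}})=x_{13}\nmid x_{11}x_{22}x_{33}=\lt(\p{m})$, yet $x_{13}$ divides the term $x_{13}x_{22}x_{31}$ of $\p{m}$. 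This shows both that your target $J=J'$ fails and that the proposition, read literally for a \emph{fixed} $\tilde{\p{m}}$, is not correct as stated.

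The paper does not attempt to match a specific $\tilde{\p{m}}$ to itself; it argues through the diagonal length $l(\cdot,\tilde B)$ of Definition~\ref{def:length} and the criterion of Theorem~\ref{thm:divisible}. If $\p{m}\cap\tilde B$ is $k$ consecutive full rows (or columns) of $\p{m}$, then every term $\p{t}$ of $\p{m}$ has exactly $k$ variables in $\tilde B$, so $l(\p{t},\tilde B)\le k$, while $l(\lt(\p{m}),\tilde B)=k$. Hence if \emph{some} $\tilde r$-minor in $\tilde B$ has leading term dividing a term of $\p{m}$, Theorem~\ref{thm:divisible} forces $\tilde r\le k$, and applying it again to $\lt(\p{m})$ yields a (possibly different) $\tilde r$-minor in $\tilde B$ whose leading term divides $\lt(\p{m})$. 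That existential conclusion is exactly what Corollary~\ref{cor:fullRows} and Theorem~\ref{thm:schubert-vex-reduced} need, since minimality there rules out \emph{all} such divisors of $\lt(\p{m})$. In short, the paper works at the level of existence via lengths, whereas you were aiming at a pointwise statement that is simply too strong.
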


\begin{proof}
  Suppose that $\p{m} \cap \tilde{B}$ consists of $k$ consecutive full rows or columns of the submatrix of $\p{m}$. Then clearly we have $l(\lt(\p{m}), \tilde{B}) = k$ and $l(\p{t}, \tilde{B}) \leq k$ for any term $\p{t}$ of $\p{m}$. Then with Proposition~\ref{prop:min-reduced} the conclusion follows. 
\end{proof}

\begin{corollary}\label{cor:fullRows}
  Suppose that the generating set $\pset{G}$ of a diagonal blockwise determinantal ideal $I(\p{B}, \p{r})$ is a minimal \grobner basis. If for each $\p{m} \in \pset{G}$, $\p{m} \cap \tilde{B}$ consists of consecutive full rows or columns of the submatrix of $\p{m}$ for any $\tilde{B} \in \p{B} \setminus \p{B}_{\p{m}}$, where $\p{B}_{\p{m}} = \{B_i \in \p{B}: r_i = \deg(\p{m})\}$, then $\pset{G}$ is the reduced \grobner basis of $I(\p{B}, \p{r})$.
\end{corollary}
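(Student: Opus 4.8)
The plan is to obtain this corollary as a direct consequence of Proposition~\ref{prop:fullRows}, combined with the degree-comparison argument already used in the proof of Proposition~\ref{prop:min-reduced}. By Definition~\ref{def:reducedGB}, since $\pset{G}$ is already a \grobner basis of $I(\p{B}, \p{r})$ and every minor of $X$ has leading coefficient $\pm 1$, it suffices to show that each $\p{m} \in \pset{G}$ is reduced modulo $\pset{G} \setminus \{\p{m}\}$; that is, for every $\tilde{\p{m}} \in \pset{G}$ distinct from $\p{m}$ and every term $\p{t} \in \term(\p{m})$, we have $\lt(\tilde{\p{m}}) \nmid \p{t}$. I would fix such $\p{m}$ and $\tilde{\p{m}}$, say $\p{m}$ coming from the block $B \in \p{B}$ and $\tilde{\p{m}}$ from the block $\tilde{B} \in \p{B}$, and split the verification according to whether $\tilde{B}$ lies in $\p{B}_{\p{m}}$ or not.

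First I would treat the case $\tilde{B} \in \p{B}_{\p{m}}$, i.e. $\deg(\tilde{\p{m}}) = \deg(\p{m})$. This case is not covered by Proposition~\ref{prop:fullRows}, but it is precisely the situation disposed of in the first paragraph of the proof of Proposition~\ref{prop:min-reduced}: if $\lt(\tilde{\p{m}}) \mid \p{t}$ for some term $\p{t}$ of $\p{m}$, then comparing degrees forces $\lt(\tilde{\p{m}}) = \p{t}$; since $\lt(\tilde{\p{m}})$ is a leading term, Lemma~\ref{lem:leadingTerm} forces $\p{t} = \lt(\p{m})$, hence $\lt(\tilde{\p{m}}) = \lt(\p{m})$; but distinct minors have distinct (diagonal) leading terms, a contradiction. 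Thus no term of $\p{m}$ is divisible by $\lt(\tilde{\p{m}})$ when $\tilde{B} \in \p{B}_{\p{m}}$ (this also handles the subcase $\tilde{B} = B$).

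Next I would treat the case $\tilde{B} \in \p{B} \setminus \p{B}_{\p{m}}$. Here the hypothesis of the corollary guarantees that $\p{m} \cap \tilde{B}$ consists of consecutive full rows or columns of the submatrix of $\p{m}$, and the hypothesis that $\pset{G}$ is a \emph{minimal} \grobner basis gives $\lt(\tilde{\p{m}}) \nmid \lt(\p{m})$ --- this is exactly the input that keeps the conclusion of Proposition~\ref{prop:fullRows} from being vacuous. Hence Proposition~\ref{prop:fullRows} applies and yields $\lt(\tilde{\p{m}}) \nmid \p{t}$ for every term $\p{t}$ of $\p{m}$. Combining the two cases shows that $\p{m}$ is reduced modulo $\pset{G} \setminus \{\p{m}\}$, and since $\p{m} \in \pset{G}$ was arbitrary, $\pset{G}$ is the reduced \grobner basis of $I(\p{B}, \p{r})$.

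Since the statement is a corollary of Proposition~\ref{prop:fullRows}, I do not expect a genuine obstacle. The only points requiring care are bookkeeping ones: Proposition~\ref{prop:fullRows} addresses only blocks outside $\p{B}_{\p{m}}$, so the minors of the same degree (from $\p{B}_{\p{m}}$, including those in $B$ itself) must be handled separately by the degree/Lemma~\ref{lem:leadingTerm} argument; and one must invoke the minimality of $\pset{G}$ at the right moment to make $\lt(\tilde{\p{m}}) \nmid \lt(\p{m})$ available so that Proposition~\ref{prop:fullRows} has content.
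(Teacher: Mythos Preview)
Your proposal is correct and matches the paper's intended approach: the paper states this corollary without proof immediately after Proposition~\ref{prop:fullRows}, treating it as a direct consequence, and your argument spells out exactly the two ingredients needed (the same-degree case via the Lemma~\ref{lem:leadingTerm} argument from Proposition~\ref{prop:min-reduced}, and the different-degree case via minimality plus Proposition~\ref{prop:fullRows}).
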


\subsection{Schubert determinantal ideals}
 \label{sec:gb-schubert}

We found the same results on reduced \grobner bases of Schubert determinantal ideals as in this section already presented in \cite[Theorem~1.5]{S23m} when we were finishing this manuscript. We decided to keep this section for the reason of completeness, and the readers should also refer to \cite{S23m} for these results. 

 Note that the defining blocks $X_{pq}$ with $(p,q) \in \ess(w)$ of a Schubert determinantal ideal $I_w$ are both anti-diagonal and diagonal. So we can study the reduced \grobner basis of $I_w$ w.r.t.\! both anti-diagonal and diagonal term orders with the results presented in Section~\ref{sec:gb-block}. 

Let $w \in S_n$ be a permutation. For any two distinct $(p, q), (\tilde{p}, \tilde{q}) \in \ess(w)$, let $r= \rank(w^T_{pq})+1$ and $B = X_{pq}$, and $\tilde{r}$ and $\tilde{B}$ be similarly defined for $(\tilde{p}, \tilde{q})$. To study the reduced \grobner basis of $I_w$, we need to test whether an $r$-minor in $B$ is reducible modulo any $\tilde{r}$-minor in $\tilde{B}$. Clearly we only need to consider the case when $r> \tilde{r}$. There exist two kinds of relative positions between $(p, q)$ and $(\tilde{p}, \tilde{q})$: (1) $\tilde{p} < p$ and $\tilde{q} < q$, and (2) otherwise, namely $\tilde{p} \geq p$ or $\tilde{q} \geq q$.

Case~(2) is easier: in this case for any $r$-minor $\p{m}$ in $B$, $\p{m} \cap \tilde{B}$ is either empty or consists of consecutive full rows or columns of the submatrix of $\p{m}$. Then by Proposition~\ref{prop:fullRows}, we only need to ensure that $\lt(\tilde{\p{m}}) \nmid \lt(\p{m})$ for any $\tilde{\p{m}}$ in $\tilde{B}$, and this is guaranteed by the minimality of elusive minors in Theorem~\ref{thm:minimal}. 

Case~(1) only happens when $w$ is not vexillary, and one can see that this case contains the following two sub-cases: (1.1) $\p{m} \cap \tilde{B}$ is either empty or consists of consecutive rows or columns of the submatrix of $\p{m}$, this case is similar to case (2) above; (1.2) $\p{m} \cap \tilde{B}$ does not contain full rows or columns of the submatrix of $\p{m}$, namely $1 \leq \#R(\p{m} \cap \tilde{B}) < r$ and $1 \leq \#C(\p{m} \cap \tilde{B}) < r$. In this sub-case, for any anti-diagonal term order, one can see that there exists a term $\p{t}$ of $\p{m}$ other than $\lt(\p{m})$ such that $l(\p{t}, \tilde{B}) > l(\lt(\p{m}), \tilde{B})$. Therefore, unlike case~(2) above, $l(\lt(\p{m}), \tilde{B}) < \tilde{r}$ does not imply $l(\p{t}, \tilde{B}) < \tilde{r}$. Take $w = 2143 \in S_4$ for example. Then $\ess(w) = \{(1, 1), (3,3)\}$, and the leading term of the only $3$-minor $\p{m}$ in $X_{33}$ is $x_{13}x_{22}x_{31}$, not divisible by the only $1$-minor $x_{11}$ in $X_{11}$. However, two terms of $\p{m}$, $x_{11}x_{23}x_{32}$ and $x_{11}x_{22}x_{33}$, are divisible by $x_{11}$. Hence, Fulton generators, which are also the elusive minors for this $w$, form a minimal \grobner basis of $I_w$ but is not the reduced one. In fact, in this sub-case reducibility between elusive minors always occurs w.r.t. the anti-diagonal term order.

\begin{theorem}\label{thm:schubert-vex-reduced}
  Let $w \in S_n$ be a vexillary permutation. Then elusive minors form the reduced \grobner basis of $I_w$ w.r.t. any anti-diagonal or diagonal term order. 
\end{theorem}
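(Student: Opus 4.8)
The plan is to reduce the statement to Corollary~\ref{cor:fullRows}: once we know that the elusive minors already form a minimal \grobner basis, it remains only to verify a purely combinatorial intersection condition on the defining blocks, and that condition will follow from the vexillary characterization of $\ess(w)$ alone.

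Concretely, first I would record that $I_w$ is the blockwise determinantal ideal with blocks $\p{B}=(X_{pq}:(p,q)\in\ess(w))$ and sizes $\p{r}=(\rank(w^T_{pq})+1:(p,q)\in\ess(w))$, and that each block $X_{pq}$ is simultaneously anti-diagonal and diagonal in $X$; hence the results of Section~\ref{sec:gb-block} apply to $I_w$ with respect to both families of term orders. Since $w$ is vexillary, Theorem~\ref{thm:minimal} tells us that the elusive minors form a minimal \grobner basis of $I_w$ with respect to any anti-diagonal \emph{and} any diagonal term order. By Corollary~\ref{cor:fullRows} it therefore suffices to prove: for every elusive minor $\p{m}$ lying in a block $B=X_{pq}$ and every other block $\tilde B=X_{\tilde p\tilde q}$ with $(\tilde p,\tilde q)\in\ess(w)$, the intersection $\p{m}\cap\tilde B$ is either empty or consists of consecutive full rows, or of consecutive full columns, of the submatrix of $\p{m}$.

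For this I would invoke the characterization recalled after Definition~\ref{def:schubert} (cf.\ \cite[Remark~9.17]{FUL92F}): $w$ is vexillary precisely when $\ess(w)$ contains no pair $(p,q),(p',q')$ with $p<p'$ and $q<q'$. Applied to the distinct pair $(p,q),(\tilde p,\tilde q)\in\ess(w)$, this excludes the simultaneous inequalities $\tilde p<p$ and $\tilde q<q$, so either $\tilde p\geq p$ or $\tilde q\geq q$. If $\tilde p\geq p$, then $R(\p{m})\subseteq[p]\subseteq[\tilde p]$, so every row of $\p{m}$ already lies in the row range of $\tilde B$, and hence $\p{m}\cap\tilde B=\{x_{ij}\in\p{m}:j\leq\tilde q\}$ is exactly the set of full columns of $\p{m}$ whose index is at most $\tilde q$; because $C(\p{m})$ is a set of integers, these indices form an initial segment of $C(\p{m})$, i.e.\ consecutive columns of the submatrix of $\p{m}$. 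The case $\tilde q\geq q$ is symmetric and yields consecutive full rows. This verifies the hypothesis of Corollary~\ref{cor:fullRows}, which then gives the conclusion for both anti-diagonal and diagonal term orders.

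The argument is short once the earlier results are in place; the two points demanding attention are that it must cover the two families of term orders simultaneously --- which is why we lean on the Schubert blocks being both anti-diagonal and diagonal, and on the minimality statement of Theorem~\ref{thm:minimal} for diagonal term orders holding exactly because $w$ is vexillary --- and that the substantive geometric input is precisely the absence of the ``strictly nested'' configuration $\tilde p<p,\ \tilde q<q$ in $\ess(w)$, so that Case~(1) of the discussion preceding the theorem cannot arise and every block interaction falls into the benign Case~(2) where minimality already forces reducedness via Proposition~\ref{prop:fullRows}.
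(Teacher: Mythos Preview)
Your proposal is correct and follows essentially the same route as the paper: use the vexillary characterization of $\ess(w)$ to rule out the nested configuration $\tilde p<p,\ \tilde q<q$, deduce that each intersection $\p{m}\cap\tilde B$ consists of consecutive full rows or columns, and then combine Theorem~\ref{thm:minimal} with Corollary~\ref{cor:fullRows}. The only difference is that you spell out the case analysis and the ``consecutive full rows/columns'' verification more explicitly than the paper does.
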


\begin{proof}
   If $w$ is vexillary, then for any two distinct $(p, q), (\tilde{p}, \tilde{q}) \in \ess(w)$, we have $\tilde{p} \geq p$ or $\tilde{q} \geq q$. Hence for any minor $\p{m}$ in $B$, $\p{m} \cap \tilde{B}$ is either empty or consists of consecutive rows or columns of the submatrix of $\p{m}$. By Theorem~\ref{thm:minimal}, the set of elusive minors form a minimal \grobner basis of $I_w$, and thus further by Corollary~\ref{cor:fullRows} we know that they are the reduced \grobner basis.
\end{proof}

\begin{theorem}\label{thm:schubert-reduced-vex}
Let $w \in S_n$ be a permutation. If elusive minors form the reduced \grobner basis of $I_w$ w.r.t. an anti-diagonal term order, then $w$ is vexillary. 
\end{theorem}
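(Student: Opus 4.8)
The plan is to prove the contrapositive: if $w$ is not vexillary, then the elusive minors do not form the reduced \grobner basis of $I_w$ with respect to any anti-diagonal term order. Since $w$ is non-vexillary, by the characterization recalled after Definition~\ref{def:schubert} there exist two distinct pairs $(p,q), (\tilde p, \tilde q) \in \ess(w)$ with $\tilde p < p$ and $\tilde q < q$; among all such witnessing pairs we may choose one so that, say, $\tilde r := \rank(w^T_{\tilde p \tilde q}) + 1$ is as small as possible, or more usefully choose $(p,q)$ and $(\tilde p,\tilde q)$ to be ``closest'' in an appropriate sense so that the block $\tilde B = X_{\tilde p \tilde q}$ sits strictly to the northwest of $(p,q)$. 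Set $r = \rank(w^T_{pq}) + 1$, $B = X_{pq}$. The first step is to record the standard rank inequality $\tilde r < r$ (this follows because $(\tilde p,\tilde q)$ lies strictly northwest of $(p,q)$ inside the Rothe diagram, so the essential-set rank conditions force $\rank(w^T_{\tilde p\tilde q}) < \rank(w^T_{pq})$; alternatively it is forced by the minimality choice, since equal ranks would make one generator redundant).

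The heart of the argument is to exhibit a single elusive minor $\p m$ from $B$ together with a term $\p t \in \term(\p m)$, $\p t \neq \lt(\p m)$, such that $l_a(\p t, \tilde B) \geq \tilde r$; then by Theorem~\ref{thm:divisible} there is a minor $\tilde{\p m}$ in $\tilde B$ with $\lt_a(\tilde{\p m}) \mid \p t$, so $\p m$ is reducible modulo $\tilde{\p m}$, and hence the set of elusive minors — which contains $\p m$ by the construction below and contains $\tilde{\p m}$ as $\tilde B$ contributes elusive minors of size $\tilde r$ — cannot be reduced. To produce $\p m$ I would invoke Proposition~\ref{prop:corner}: take the southeast corner to be $(p,q)$ itself (note $(p,q) \in D(w)$ since it lies in $\ess(w) \subseteq D(w)$) and let $\p m = ([p-r+1,\,p],\,[q-r+1,\,q])$ be the solid $r \times r$ elusive minor anchored at $(p,q)$. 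Because $\tilde p < p$ and $\tilde q < q$ and the ladder $X_{\tilde p \tilde q}$ is the northwest submatrix, the intersection $\p m \cap \tilde B$ is exactly the solid rectangular block of rows $[p-r+1, \tilde p]$ and columns $[q-r+1, \tilde q]$ of $\p m$ (after discarding rows/columns of $\p m$ that exceed $\tilde p$ or $\tilde q$); one has to check this rectangle is nonempty and in fact has at least $\tilde r$ rows and at least $\tilde r$ columns, which is where the precise inequalities relating $p, q, r, \tilde p, \tilde q, \tilde r$ must be verified — this is the main obstacle and the step most likely to need a careful case analysis or an optimal choice of the witnessing pair $(\tilde p, \tilde q)$.

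Granting that $\p m \cap \tilde B$ is a solid rectangle with $\geq \tilde r$ rows and $\geq \tilde r$ columns, I would then construct the term $\p t$ explicitly. Inside this rectangle one can place an \emph{increasing} (staircase) selection of $\tilde r$ entries from distinct rows and columns — i.e., a diagonal pattern within the rectangle — and extend it to a full term $\p t$ of $\p m$ by choosing the remaining $r - \tilde r$ entries of $\p m$ in any way consistent with being a term (distinct rows and columns of $\p m$); since $r - \tilde r \geq 1$ there is genuine freedom, and one checks that $\p t$ can be chosen with $s(\p t \cap \tilde B)$ an increasing sequence of length $\tilde r$, hence (for the anti-diagonal length) actually the point is the reverse: I want $s(\p t \cap \tilde B)$ to \emph{contain a decreasing subsequence of length $\tilde r$}, which is immediate if I instead place the $\tilde r$ chosen entries along the anti-diagonal of the rectangle. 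In either reading, because the rectangle is solid and has side lengths $\geq \tilde r$, both a length-$\tilde r$ increasing and a length-$\tilde r$ decreasing pattern are available, so $l_a(\p t, \tilde B) \geq \tilde r$. Finally I must confirm $\p t \neq \lt_a(\p m)$: the anti-diagonal leading term of the solid minor $\p m$ is the full anti-diagonal of $\p m$, whose restriction to $\tilde B$ is forced (it is the anti-diagonal entries lying in the northwest rectangle), and one checks that the freedom in choosing the other $r - \tilde r$ entries of $\p t$, or equivalently the freedom in how the length-$\tilde r$ pattern sits inside the $(\#R) \times (\#C)$ rectangle with at least one of $\#R, \#C$ strictly larger than $\tilde r$, lets us pick $\p t$ distinct from $\lt_a(\p m)$. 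This is precisely the phenomenon already displayed for $w = 2143$ in the discussion preceding the theorem, and the general argument is its faithful generalization; assembling these pieces, $\p m$ is reducible modulo an elusive minor of $\tilde B$, contradicting that the elusive minors form the reduced \grobner basis, which completes the contrapositive.
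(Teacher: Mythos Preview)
Your contrapositive strategy and the use of Proposition~\ref{prop:corner} to produce corner elusive minors match the paper's proof. However, there is a genuine gap at exactly the point you flag as ``the main obstacle'': anchoring the large elusive minor at $(p,q)\in\ess(w)$ itself does \emph{not} in general guarantee that the intersection with $X_{\tilde p\tilde q}$ contains $\tilde r$ rows and $\tilde r$ columns. A concrete counterexample is $w=215634\in S_6$, for which $\ess(w)=\{(1,1),(4,4)\}$ with $\tilde r=1$ and $r=3$. Your corner minor at $(4,4)$ is $\p m=([2,4],[2,4])$, whose row set misses $[\tilde p]=[1]$ entirely, so $\p m\cap X_{11}=\emptyset$ and no term of $\p m$ is divisible by $x_{11}$. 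Since these are the \emph{only} two essential pairs, no ``optimal choice of the witnessing pair'' within $\ess(w)$ can rescue the argument.

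The paper's fix is precisely what your sketch is missing: instead of $(p,q)$, one takes a northwest-most element $(\overline p,\overline q)$ of $D(w)$ (not necessarily of $\ess(w)$) inside the rectangle with corners $(\tilde p+1,\tilde q+1)$ and $(p,q)$. This minimality forces every box immediately to the north or west of $(\overline p,\overline q)$ within that rectangle to be killed by a $1$ of $w^T$ lying in $X_{\overline p\overline q}\setminus X_{\tilde p\tilde q}$, and a counting argument then yields $\overline r-\tilde r\ge \overline p-\tilde p$ and $\overline r-\tilde r\ge \overline q-\tilde q$. In the example above this picks $(\overline p,\overline q)=(3,3)$, giving the elusive minor $([1,3],[1,3])$, which visibly contains $x_{11}$. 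Two smaller points: once you know the intersection has $\ge\tilde r$ rows and columns, it is cleaner to invoke Proposition~\ref{prop:reducible} directly (containment of the corner elusive minor $([\tilde p-\tilde r+1,\tilde p],[\tilde q-\tilde r+1,\tilde q])$) rather than building a term and appealing to Theorem~\ref{thm:divisible}; and your claim that the divisor $\tilde{\p m}$ produced by Theorem~\ref{thm:divisible} is itself elusive is unjustified---one would still need to pass to an elusive minor whose leading term divides $\lt(\tilde{\p m})$.
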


\begin{proof}
  We prove that if $w$ is not vexillary, then there exists an elusive minor which is reducible modulo another elusive minor. Since $w$ is not vexillary, there exist two distinct $(p, q), (\tilde{p}, \tilde{q}) \in \ess(w)$ with $\tilde{p} < p$ and $\tilde{q} < q$. Consider the submatrix $M$ of $w^{T}$ with the entries at $(\tilde{p}+1, \tilde{q}+1)$ and $(p, q)$ as its diagonal corners. Let $(\overline{p}, \overline{q})$ be one of the northwest most elements in $M$ such that $(\overline{p}, \overline{q})$ belongs to the Rothe diagram $D(w)$ and any other element in $\{(i, j): i \in [\tilde{p}+1,\overline{p}], j \in [\tilde{q}+1, \overline{q}]\}$ does not. With $(p, q) \in \ess(w) \subseteq D(w)$ and $(p, q) \in M$, clearly such an element $(\overline{p}, \overline{q})$ exists. The relative positions of $(p, q)$, $(\tilde{p}, \tilde{q})$, and $(\overline{p}, \overline{q})$ are illustrated in Figure~\ref{fig:schubert-vex}. Denote $\overline{r} = \rank(w^T_{\overline{p}\overline{q}})+1$ and $\tilde{r} = \rank(w^T_{\tilde{p}\tilde{q}})+1$. Let $\p{m} = ([\overline{p}-\overline{r}+1, \overline{p}], [\overline{q}-\overline{r}+1, \overline{q}])$. Then by Proposition~\ref{prop:corner}, $\p{m}$ is an elusive minor of size $\overline{r}$, and next we prove that $\p{m}$ is reducible modulo some elusive minor in $X_{\tilde{p}\tilde{q}}$.

By the assumption, none of $(\overline{p}, \tilde{q}+1), \ldots, (\overline{p}, \overline{q}-1)$ and $(\tilde{p}+1, \overline{q}), \ldots, (\overline{p}-1, \overline{q})$ belongs to $D(w)$. But with $(\overline{p}, \overline{q}) \in D(w)$, we know that any of $(\overline{p}, \tilde{q}+1), \ldots, (\overline{p}, \overline{q}-1)$ can only be removed from $D(w)$ by some $(k, w(k))$ to its north, and similarly any of $(\tilde{p}+1, \overline{q}), \ldots, (\overline{p}-1, \overline{q})$ only by some $(k, w(k))$ to its west. Furthermore, with $(\tilde{p}, \tilde{q}) \in \ess(w)$ we know that neither $(\tilde{p}, \tilde{q}+1)$ nor $(\tilde{p}+1, \tilde{q})$ lies in $D(w)$. Similarly, $(\tilde{p}+1, \tilde{q})$ can only be removed by some $(k, w(k))$ to its west and $(\tilde{p}, \tilde{q}+1)$ only by some $(k, w(k))$ to its north. All the boxes outside $D(w)$ we use are marked in red in Figure~\ref{fig:schubert-vex}.

Recall that there are $\overline{r}$ and $\tilde{r}$ non-zero entries in $w^{T}_{\overline{p}\overline{q}}$ and $w^{T}_{\tilde{p}\tilde{q}}$ respectively. Counting the entries $(k, w(k))$ in $w^{T}_{\overline{p}\overline{q}}$ but not in $w^{T}_{\tilde{p}\tilde{q}}$ to remove $(\overline{p}, \tilde{q}+1), \ldots, (\overline{p}, \overline{q}-1)$ and $(\tilde{p}+1, \tilde{q})$, we have $\overline{r} - \tilde{r} \geq (\overline{q}-\tilde{q}-1)+ 1 = \overline{q} - \tilde{q}$. Similarly we also have $\overline{r} - \tilde{r} \geq \overline{p} - \tilde{p}$. These mean that $\#(R(\p{m}) \cap [\tilde{p}]) = \tilde{p} - (\overline{p}-\overline{r}+1)+1 =  \overline{r} - (\overline{p} - \tilde{p}) \geq \tilde{r}$ and $\#(C(\p{m}) \cap [\tilde{q}]) = \tilde{q} - (\overline{q}-\overline{r}+1)+1 = \overline{r} - (\overline{q} - \tilde{q}) \geq \tilde{r}$. Consider the minor $\tilde{\p{m}} = ([\tilde{p}-\tilde{r}+1, \tilde{p}], [\tilde{q}-\tilde{r}+1, \tilde{q}])$. Then again by Proposition~\ref{prop:corner} we know that $\tilde{\p{m}}$ is an elusive minor of size $\tilde{r}$ in $X_{\tilde{p}\tilde{q}}$. Clearly $R(\tilde{\p{m}}) \subseteq R(\p{m})$ and $C(\tilde{\p{m}}) \subseteq C(\p{m})$, and thus by Proposition~\ref{prop:reducible} we know that $\p{m}$ is reducible modulo $\tilde{\p{m}}$: this ends the proof. 
\end{proof}

  \begin{figure}[h]
    \centering

\begin{tikzpicture}
    \begin{scope}[scale =0.5]
        \draw[black] (0,0)--(10,0)--(10,10)--(0,10)--(0,0);
        \draw[black] (0,6)--(3,6)--(3,10);
        \draw[black] (0,2)--(8,2)--(8,10);
        \filldraw[gray!20] (2.03,6.03) rectangle (2.97,6.97);
        \filldraw[gray!20] (7.03,2.03) rectangle (7.97,2.97);
        \filldraw[gray!20] (9.03,0.03) rectangle (9.97,0.97);
        \node[font=\fontsize{6}{6}\selectfont,left,black] at (2.2,6.5){$(\tilde{p},\tilde{q})$};
        \node[font=\fontsize{6}{6}\selectfont,right,black] at (7.8,2.5){$(\bar{p},\bar{q})$};
        \node[font=\fontsize{6}{6}\selectfont,right,black] at (9.8,0.5){$(p,q)$};

        \foreach \x in {3,...,6}{
            \filldraw[red!20] (\x,2.03) rectangle (\x+1,2.97);}
        \foreach \x in {3,...,5}{
            \filldraw[red!20] (7.03,\x) rectangle (7.97,\x+1);}
    
        \foreach \x in {3,...,7}{
          \foreach \y in {2,...,5}{
          \draw[black] (\x,\y) rectangle (\x+1,\y+1);}}
        \draw[black] (2,5) rectangle (3,6);
        \draw[black] (3,6) rectangle (4,7);
        \filldraw[red!20] (2.03,5.03) rectangle (2.97,5.97);
        \filldraw[red!20] (3.03,6.03) rectangle (3.97,6.97);
    \end{scope}
  \end{tikzpicture}
  \caption{Illustration for the proof of Theorem~\ref{thm:schubert-reduced-vex}}\label{fig:schubert-vex}
  \end{figure}
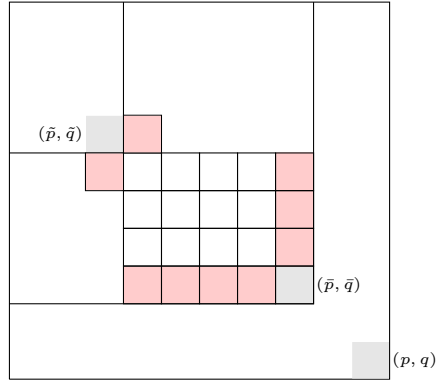

As one can see, the reduced \grobner bases of $I_w$ for a non-vexillary permutation $w$ w.r.t. anti-diagonal term orders are still unknown. This remaining case is further studied in Section~\ref{sec:reduction} by investigating the reduction among elusive minors.

\subsection{Ladder determinantal ideals}
\label{sec:gb-ladder}

By Theorem~\ref{thm:one-sided-schubert} one-sided ladder determinantal ideals can be identified as Schubert determinantal ideals of vexillary permutations. For any one-sided ladder determinantal ideal $I$, let $w$ be its corresponding vexillary permutation in Theorem~\ref{thm:one-sided-schubert}. Then by Theorem~\ref{thm:schubert-vex-reduced}, we know that all the elusive minors form the reduced \grobner basis of $I$ w.r.t. any anti-diagonal and diagonal term order. 

For two-sided ladder determinantal ideals, the \grobner basis in Theorem~\ref{thm:mixed} is indeed the reduced one, as shown below. 

\begin{proposition}\label{prop:mixed-min}
  Let $I(L,\p{r})$ and $\pset{G}$ be as stated in Theorem~\ref{thm:mixed}. Then $\pset{G}$ is the reduced \grobner basis of $I(L,\p{r})$  w.r.t. any lexicographic diagonal term order induced by a scanning variable order.
\end{proposition}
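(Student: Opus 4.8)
The plan is to verify Definition~\ref{def:reducedGB} directly, taking from Theorem~\ref{thm:mixed} that $\pset{G}$ is already a \grobner basis. Every $G \in \pset{G}$ is a minor of the generic matrix $X$, so w.r.t.\ any diagonal term order $\lt(G)$ is the product of the diagonal entries of its submatrix and $\lc(G) = 1$; hence the only real point is that each $G \in \pset{G}$ be reduced modulo $\pset{G}\setminus\{G\}$. By Proposition~\ref{prop:reducible} this is equivalent to the combinatorial statement that no minor of $\pset{G}$ is contained in another (distinct) minor of $\pset{G}$, where ``$\tilde{\p{m}}$ contained in $\p{m}$'' means $R(\tilde{\p{m}}) \subseteq R(\p{m})$ and $C(\tilde{\p{m}}) \subseteq C(\p{m})$.

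I would establish this by contradiction. Suppose $\tilde{\p{m}} \in \pset{G}_j$ is contained in $\p{m} \in \pset{G}_i$ with $\tilde{\p{m}} \neq \p{m}$. Comparing the sizes of the row index sets forces $r_j \leq r_i$, and if $r_j = r_i$ then containment of sets of equal cardinality gives $R(\tilde{\p{m}}) = R(\p{m})$ and $C(\tilde{\p{m}}) = C(\p{m})$, hence $\tilde{\p{m}} = \p{m}$, a contradiction; so $r_j < r_i$ and in particular $i \neq j$. The cases $j < i$ and $j > i$ are interchanged by transposing $X$, which also swaps the two kinds of restrictions imposed on $\pset{G}_i$ in Theorem~\ref{thm:mixed}, so I treat $j < i$; then $\p{m} \in \pset{G}_i$ has at most $r_j - 1$ of its columns contained in $L_j$. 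I would show that, contrary to this, \emph{every} column of the submatrix of $\p{m}$ whose index lies in $C(\tilde{\p{m}})$ is entirely contained in $L_j$, which gives $\p{m}$ at least $|C(\tilde{\p{m}})| = r_j$ columns in $L_j$ --- the desired contradiction.

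For that column-containment I would use the explicit form $L_k = \{x_{pq} \in L : c_k \leq p \leq m,\ d_k \leq q \leq n\}$ from Definition~\ref{def:mixed}, together with the orderings $c_1 < \cdots < c_u$ and $d_1 \geq \cdots \geq d_u$ from \eqref{eq:ladder-seq}, which give $c_j < c_i$. Fix $c \in C(\tilde{\p{m}})$ and $a \in R(\p{m})$: since $\p{m} \in \pset{G}_i$ the entry $x_{ac}$ lies in $L_i$, so $x_{ac} \in L$ and $a \geq c_i > c_j$; since $\tilde{\p{m}}$ has an entry in column $c$ lying in $L_j$, we get $c \geq d_j$; therefore $x_{ac} \in L$ with $a \geq c_j$ and $c \geq d_j$, i.e.\ $x_{ac} \in L_j$. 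The step that needs genuine care --- and is the real content of the argument --- is exactly this matching of the two restrictions defining the $\pset{G}_i$ with the non-containment of minors: one must line up the index inequality $j < i$ (resp.\ $j > i$) with the ``columns in $L_j$'' (resp.\ ``rows in $L_j$'') condition and observe that the portion of $L_i$ lying weakly to the right of column $d_j$ (resp.\ weakly below row $c_j$) sits inside $L_j$, which by the description above comes down to $c_i > c_j$ (resp.\ $d_i \geq d_j$). Everything else is routine bookkeeping.
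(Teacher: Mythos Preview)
Your argument is correct. The paper's own proof is a one-line appeal to Corollary~\ref{cor:fewerRows}: for $\p{m}\in\pset{G}_i$ and any block $L_j$ with $j\neq i$, the defining conditions of $\pset{G}_i$ in Theorem~\ref{thm:mixed} say that at most $r_j-1$ columns (if $j<i$) or rows (if $j>i$) of $\p{m}$ lie in $L_j$, which is exactly the hypothesis $\#C(\p{m}\cap L_j)<r_j$ or $\#R(\p{m}\cap L_j)<r_j$ of that corollary.

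Your route is essentially the same combinatorics, but packaged differently: instead of invoking the diagonal-length machinery behind Corollary~\ref{cor:fewerRows} (Theorem~\ref{thm:divisible} and Proposition~\ref{prop:min-reduced}), you go straight through Proposition~\ref{prop:reducible} and argue by containment of minors. The key inclusion you prove---that for $j<i$ every column of $\p{m}$ indexed by $C(\tilde{\p{m}})$ lies entirely in $L_j$ because $c_i>c_j$, and dually for $j>i$ using $d_i\geq d_j$---is precisely the geometric fact that makes Corollary~\ref{cor:fewerRows} applicable here. So the two proofs coincide at the level of content; yours is more elementary in that it never needs the notion of diagonal length, while the paper's gains concision by having already isolated Corollary~\ref{cor:fewerRows} as a reusable tool. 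One small remark: your appeal to ``transposing $X$'' for the case $j>i$ is a bit glib (the ladder is not literally symmetric), but since you spell out both cases explicitly at the end via $c_i>c_j$ versus $d_i\geq d_j$, the argument stands.
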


\begin{proof}
This proposition follows immediately from Corollary~\ref{cor:fewerRows}.
\end{proof}

\section{Reduction between minors}
\label{sec:reduction}

As explained in Section~\ref{sec:gb}, reduction is the fundamental operation in the theory of \grobner bases, and from a minimal \grobner basis one can construct the reduced \grobner basis by performing reduction. In this section we are interested in reduction of a minor in $X$ modulo another set of minors, and the term order is an anti-diagonal or diagonal one.

\begin{definition}\label{def:remove}
  Let $\p{m}_1$ and $\p{m}_2$ be two distinct minors in $X$ such that $\p{m}_2$ is contained in $\p{m}_1$. The \emph{complement} of $\p{m}_2$ in $\p{m}_1$, denoted by $\overline{\p{m}_2}$, is the minor such that $R(\overline{\p{m}_2}) = R(\p{m}_1) \setminus R(\p{m}_2)$  and $C(\overline{\p{m}_2}) = C(\p{m}_1) \setminus C(\p{m}_2)$. The \emph{removed terms} of $\p{m}_1$ by $\p{m}_2$ is defined as
$$\rterm(\p{m}_1, \p{m}_2) := \{\p{t}\overline{\p{t}} : \p{t}\in \term(\p{m}_2), \overline{\p{t}}\in \term(\overline{\p{m}_2})\}.$$
\end{definition}

Clearly we have $\rterm(\p{m}_1, \p{m}_2) \subseteq \term(\p{m}_1)$. The set of removed terms of $\p{m}_1$ by $\p{m}_2$ is indeed the terms of the \emph{Laplace product} of $\p{m}_2$ in $\p{m}_1$ in \cite{S16o}. The word ``Laplace'' here is used because it appears as a summand in the generalized Laplace expansion of a square matrix $X$. That is, the determinant of $X$ is equal to all the Laplace products which share the same row indices $R$: $\det(X) = \sum_{R(\p{m}) = R} (-1)^{\delta(\p{m})} \p{m}\overline{\p{m}}$, where $\delta(\p{m})$ is the sum of all the elements in $R(\p{m})$ and $C(\p{m})$ \cite{S16o}. The following proposition justifies the name of removed terms in Definition~\ref{def:remove}.

\begin{proposition}\label{prop:reduction}
 Let $\p{m}_1$ and $\p{m}_2$ be two distinct minors in $X$ such that $\p{m}_1$ is reducible modulo $\p{m}_2$. Write $\p{m}_1 = \sum_{\p{t} \in \term(\p{m}_1)}c_{\p{t}}\p{t}$ with $c_{\p{t}} = \pm 1$. Then
  \begin{equation}
    \label{eq:reduction}
   \p{m}_1 \xrightarrow[*]{\p{m}_2} \sum_{\p{t} \in \term(\p{m}_1)\setminus \rterm(\p{m}_1,\p{m}_2)}c_{\p{t}}\p{t}.  
  \end{equation}  
\end{proposition}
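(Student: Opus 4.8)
The plan is to show that reducing $\p{m}_1$ modulo $\p{m}_2$ to completion exactly deletes the removed terms $\rterm(\p{m}_1,\p{m}_2)$ and nothing else. The key structural fact I would exploit is the \emph{generalized Laplace expansion} of $\p{m}_1$ along the rows $R(\p{m}_2)$: writing $\p{m}_1$ as the determinant of its submatrix and expanding along the set of rows indexed by $R(\p{m}_2)$, one obtains
\begin{equation*}
  \p{m}_1 = \sum_{S} (-1)^{\delta(S)}\, \p{n}_S\, \overline{\p{n}_S},
\end{equation*}
where $S$ ranges over the $|R(\p{m}_2)|$-subsets of $C(\p{m}_1)$, $\p{n}_S$ is the minor with rows $R(\p{m}_2)$ and columns $S$, and $\overline{\p{n}_S}$ is its complement in $\p{m}_1$. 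Exactly one summand corresponds to $S = C(\p{m}_2)$, namely $\pm\,\p{m}_2\,\overline{\p{m}_2}$, and by Definition~\ref{def:remove} the set of terms appearing in this single summand is precisely $\rterm(\p{m}_1,\p{m}_2)$. So $\p{m}_1 = \pm\,\p{m}_2\,\overline{\p{m}_2} + P$, where $P := \sum_{S \neq C(\p{m}_2)}(-1)^{\delta(S)}\p{n}_S\overline{\p{n}_S}$.

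Next I would argue two things about this decomposition. First, $\term(P)$ and $\rterm(\p{m}_1,\p{m}_2)$ are disjoint: a term of $\p{m}_1$ is a product of $r$ entries from distinct rows and columns of the submatrix of $\p{m}_1$, hence it lies in $\rterm(\p{m}_1,\p{m}_2)$ iff its entries in rows $R(\p{m}_2)$ occupy exactly the columns $C(\p{m}_2)$; but every term of $\p{n}_S\overline{\p{n}_S}$ for $S \neq C(\p{m}_2)$ has its rows-$R(\p{m}_2)$ part occupying columns $S \neq C(\p{m}_2)$. Since $\p{m}_1$ is multilinear with $\pm 1$ coefficients, $\term(\p{m}_1) = \rterm(\p{m}_1,\p{m}_2) \sqcup \term(P)$ and the coefficients match, so $P = \sum_{\p{t}\in\term(\p{m}_1)\setminus\rterm(\p{m}_1,\p{m}_2)} c_{\p{t}}\p{t}$, which is exactly the right-hand side of \eqref{eq:reduction}. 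Second, $P$ is reduced modulo $\p{m}_2$: every term of $P$ has, in rows $R(\p{m}_2)$, a column-set $S \neq C(\p{m}_2)$, so it is not divisible by $\lt(\p{m}_2)$ — whether the term order is anti-diagonal or diagonal, $\lt(\p{m}_2)$ uses precisely rows $R(\p{m}_2)$ and columns $C(\p{m}_2)$, and divisibility would force those columns to appear, contradicting $S \neq C(\p{m}_2)$. (Here I use the two-row-array divisibility criterion from the paragraph before Lemma~\ref{lem:leadingTerm}.)

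Finally I would assemble the reduction chain. Since $\p{m}_1$ is reducible modulo $\p{m}_2$, some term $\p{t}\in\term(\p{m}_1)$ is divisible by $\lt(\p{m}_2)$; such $\p{t}$ must lie in $\rterm(\p{m}_1,\p{m}_2)$ by the column-set argument above. Reducing $\p{m}_1$ modulo $\p{m}_2$ by such a $\p{t}$ amounts to subtracting $\pm(\p{t}/\lt(\p{m}_2))\p{m}_2$; I would note that $\p{t}/\lt(\p{m}_2)$ is a term of $\overline{\p{m}_2}$ (its rows are $R(\p{m}_1)\setminus R(\p{m}_2)$, its columns $C(\p{m}_1)\setminus C(\p{m}_2)$), so every reduction step keeps us inside the span of $\rterm(\p{m}_1,\p{m}_2)$ plus the fixed polynomial $P$; equivalently, one checks directly that $\p{m}_1 - (\pm\,\p{m}_2\,\overline{\p{m}_2}) = P$ and that subtracting the full Laplace product $\pm\,\p{m}_2\,\overline{\p{m}_2}$ is realized by finitely many single-term reductions modulo $\p{m}_2$ (expand $\overline{\p{m}_2} = \sum \overline{c}_{\p{u}}\p{u}$ and reduce by $\p{u}\lt(\p{m}_2)$ for each $\p{u}\in\term(\overline{\p{m}_2})$ whose corresponding term still survives). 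Since the end result $P$ is reduced modulo $\p{m}_2$, confluence of reduction modulo a single polynomial gives $\p{m}_1 \xrightarrow[*]{\p{m}_2} P$, which is \eqref{eq:reduction}.

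\textbf{Main obstacle.} The delicate point is the bookkeeping that each single-term reduction step corresponds to a term of $\overline{\p{m}_2}$ and that the process terminates exactly at $P$ rather than overshooting — i.e. verifying that no term of $P$ is ever reducible and that the multiset of reductions really reconstructs $\pm\,\p{m}_2\,\overline{\p{m}_2}$. I expect this is cleanest if I avoid tracking the order of reductions and instead invoke uniqueness of the normal form modulo $\p{m}_2$: it suffices to exhibit \emph{one} polynomial, namely $P$, that is congruent to $\p{m}_1$ modulo $\bases{\p{m}_2}$, is reduced modulo $\p{m}_2$, and is reached by a sequence of legal reduction steps. The congruence and reducedness are the content of the first two paragraphs; reachability follows because at any non-terminal stage the current polynomial still contains a term of $\rterm(\p{m}_1,\p{m}_2)$ (as its difference from $P$ is a nonzero sub-sum of $\pm\,\p{m}_2\,\overline{\p{m}_2}$, whose terms all lie in $\rterm(\p{m}_1,\p{m}_2)$) and every such term is divisible by $\lt(\p{m}_2)$.
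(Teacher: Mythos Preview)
Your argument is correct in its main thrust, and it takes a slightly different route from the paper's. The paper proceeds constructively: it observes that the terms of $\p{m}_1$ divisible by $\lt(\p{m}_2)$ are exactly $\{\lt(\p{m}_2)\cdot\overline{\p{t}} : \overline{\p{t}}\in\term(\overline{\p{m}_2})\}$, that the single reduction step by $\lt(\p{m}_2)\overline{\p{t}}$ deletes precisely the section $\term(\overline{\p{t}}\,\p{m}_2)$ from $\p{m}_1$ (a sign check), and that distinct sections are disjoint --- so performing all these steps removes exactly $\rterm(\p{m}_1,\p{m}_2)$. You instead invoke the generalized Laplace expansion along $R(\p{m}_2)$ to write $\p{m}_1=\pm\p{m}_2\overline{\p{m}_2}+P$, identify $P$ with the right-hand side, check that $P$ is reduced modulo $\p{m}_2$, and appeal to uniqueness of the normal form (valid because any singleton $\{\p{m}_2\}$ is a \grobner basis of its principal ideal). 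Your framing makes the role of the Laplace product more explicit and avoids tracking signs step by step; the paper's framing makes the actual reduction sequence visible, which is what gets reused later in Section~\ref{sec:reduction}.

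One inaccuracy to fix: your final sentence asserts that \emph{every} term of $\rterm(\p{m}_1,\p{m}_2)$ is divisible by $\lt(\p{m}_2)$. This is false --- a term $\tilde{\p{t}}\,\overline{\p{t}}$ with $\tilde{\p{t}}\in\term(\p{m}_2)\setminus\{\lt(\p{m}_2)\}$ is not so divisible, since $\tilde{\p{t}}$ and $\lt(\p{m}_2)$ are distinct squarefree monomials of the same degree. Fortunately you do not need this claim: your normal-form-uniqueness argument already closes the proof, and the ``reachability'' paragraph can simply be dropped (or replaced by the observation that at any non-terminal stage the current polynomial differs from $P$ by a nonzero multiple of $\p{m}_2$, hence contains a term $\lt(\p{m}_2)\overline{\p{t}}$ for some surviving $\overline{\p{t}}\in\term(\overline{\p{m}_2})$, which \emph{is} divisible by $\lt(\p{m}_2)$).
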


\begin{proof}

  On one hand, for any term $\p{t} \in \term(\p{m}_1)$ such that $\lt(\p{m}_2) | \p{t}$, let $\overline{\p{t}} = \p{t} / \lt(\p{m}_2)$. By Proposition~\ref{prop:reducible} we have $\overline{\p{t}} \in \term(\overline{\p{m}_2})$. On the other hand, for any term $\overline{\p{t}} \in \term(\overline{\p{m}_2})$, $\p{t} = \lt(\p{m}_2) \overline{\p{t}}$ is a term of $\p{m}_1$ and $\lt(\p{m}_2) | \p{t}$. In other words, there exists a one-one correspondence between terms of $\p{m}_1$ divisible by $\lt(\p{m}_2)$ and the terms of $\overline{\p{m}_2}$.

  Furthermore, for each $\overline{\p{t}} \in \term(\overline{\p{m}_2})$, we know that $\term(\overline{\p{t}} \p{m}_2) \subseteq \term(\p{m}_1)$ and the coefficients of the terms in $\term(\overline{\p{t}} \p{m}_2)$ are either identical to those of the corresponding terms in $\term(\p{m}_1)$ or equal to their minuses. From this we have $ \p{m}_1 \xrightarrow[\overline{\p{t}}\lt(\p{m}_2)]{\p{m}_2}\p{m}_{1, \overline{\p{t}}}$, where $\p{m}_{1, \overline{\p{t}}}$ is obtained by removing the terms in $\term(\overline{\p{t}} \p{m}_2)$ from $\p{m}_1$. Note that for distinct $\overline{\p{t}}_1, \overline{\p{t}}_2 \in \term(\overline{\p{m}_2})$, we have $\term(\overline{\p{t}}_1 \p{m}_2) \cap \term(\overline{\p{t}}_2 \p{m}_2) = \emptyset$. Then by the definition of $\rterm(\p{m}_1,\p{m}_2)$, equation~\eqref{eq:reduction} follows. 
\end{proof}

\begin{example}\rm
  Take $\p{m}_1 = (\{1, 2, 3, 4\}, \{1, 2, 3, 4\})$ and $\p{m}_2 = (\{2, 3\}, \{1, 2\})$ for example. Since $R(\p{m}_2) \subseteq R(\p{m}_1)$ and $C(\p{m}_2) \subseteq C(\p{m}_1)$, we know that $\p{m}_1$ is reducible modulo $\p{m}_2$. Next let us consider the reduction of $\p{m}_1$ module $\p{m}_2$ for an anti-diagonal term order. Note that $\p{m}_2 = - x_{22}x_{31} + x_{21}x_{32}$ and $\lt(\p{m}_2) = x_{22}x_{31}$. First $\p{t}_1 = x_{13}x_{22}x_{31}x_{44} \in \term(\p{m}_1)$ is divisible by $\lt(\p{m}_2)$, and removing $\p{t}_1$ from $\p{m}_1$ modulo $\p{m}_2$ results in $\p{m}' := \p{m}_1 + x_{13}x_{44}\p{m}_2$. One can easily check that $\p{m}'$ is obtained by removing two terms $x_{13}x_{22}x_{31}x_{44}$ and $x_{13}x_{21}x_{33}x_{44}$ from $\p{m}_1$. Then $\p{t}_2 = x_{14}x_{22}x_{31}x_{43} \in \term(\p{m}')$ is divisible by $\lt(\p{m}_2)$, and the reduction results in two additional terms $x_{14}x_{22}x_{31}x_{43}$ and $x_{13}x_{22}x_{31}x_{44}$ removed from $\p{m}'$, leaving no term divisible by $\lt(\p{m}_2)$. As one can check, the removed four terms are precisely those described in $\rterm(\p{m}_1, \p{m}_2)$. 
\end{example}

Furthermore, let $\pset{M} = \{\p{m}_1, \ldots, \p{m}_s\}$ be a set of minors such that $\p{m}$ is reducible modulo $\p{m}_i$ for each $i=1, \ldots, s$. Then the \emph{removed terms} of $\p{m}$ by $\pset{M}$ is defined as $\rterm(\p{m},\pset{M}) := \bigcup_{i=1}^s \rterm(\p{m}, \p{m}_i)$. Our goal in this section is to extend formula~\eqref{eq:reduction} in Proposition~\ref{prop:reduction} to that for reduction of $\p{m}$ modulo $\pset{M}$ with $\rterm(\p{m},\pset{M})$. As one can easily find from the simple example with $\p{m}=(\{1,2,3\}, \{1,2,3\})$ and $\pset{M}=\{(\{1,2\}, \{1,2\}), (\{2,3\}, \{2,3\})\}$, in general this extension does not work. That is to say, certain structures of the minors in $\pset{M}$ are necessary for this extension. 

Next let us work on the Schubert determinantal ideal $I_w$ for some permutation $w \in S_n$. Since elusive minors of $I_w$ form a minimal \grobner basis by Theorem~\ref{thm:minimal} and they are also the reduced one by Theorem~\ref{thm:schubert-vex-reduced} for vexillary permutations, it remains to study the reduction among elusive minors to transform them to the reduced \grobner basis of $I_w$ for a non-vexillary permutation $w$ w.r.t. the anti-diagonal term order. The remaining part of this section is devoted to proving the following theorem extending Proposition~\ref{prop:reduction}.

\begin{theorem}\label{thm:reductionM}
  Let $w \in S_n$ be a non-vexillary permutation and consider the anti-diagonal term order. For any elusive minor $\p{m}$ of $I_w$, let $\pset{E}_{\p{m}}$ be the set of elusive minors strictly contained in $\p{m}$. Write $\p{m} = \sum_{\p{t} \in \term(\p{m})}c_{\p{t}}\p{t}$ with $c_{\p{t}} = \pm 1$. Then
  \begin{equation}
    \label{eq:reductionM}
   \p{m}\xrightarrow[*]{\pset{E}_{\p{m}}} \sum_{\p{t} \in \term(\p{m})\setminus \rterm(\p{m},\pset{E}_{\p{m}})}c_{\p{t}}\p{t}.  
  \end{equation} 
\end{theorem}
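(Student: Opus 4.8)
The plan is to reduce Theorem~\ref{thm:reductionM} to an iterated application of Proposition~\ref{prop:reduction}, with the central issue being that after reducing $\p{m}$ modulo one elusive minor $\tilde{\p{m}}_1 \in \pset{E}_{\p{m}}$, the resulting polynomial is no longer a single minor, so Proposition~\ref{prop:reduction} cannot be invoked verbatim for the next $\tilde{\p{m}}_2 \in \pset{E}_{\p{m}}$. The key structural fact to establish is that the set $\rterm(\p{m}, \pset{E}_{\p{m}}) = \bigcup_{\tilde{\p{m}} \in \pset{E}_{\p{m}}} \rterm(\p{m}, \tilde{\p{m}})$ is precisely the set of terms of $\p{m}$ that get killed in the full reduction process, and that no term outside this set is ever created or destroyed. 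First I would order the elements of $\pset{E}_{\p{m}} = \{\tilde{\p{m}}_1, \ldots, \tilde{\p{m}}_s\}$ arbitrarily and perform the reduction $\p{m} \xrightarrow[*]{\tilde{\p{m}}_1} \overline{\p{m}}_1 \xrightarrow[*]{\tilde{\p{m}}_2} \overline{\p{m}}_2 \to \cdots$, keeping track of which terms of $\p{m}$ remain. By Proposition~\ref{prop:reduction}, $\overline{\p{m}}_1 = \sum_{\p{t}\in\term(\p{m})\setminus\rterm(\p{m},\tilde{\p{m}}_1)} c_{\p{t}}\p{t}$; the content to prove is that reducing $\overline{\p{m}}_1$ modulo $\tilde{\p{m}}_2$ removes exactly those terms in $\rterm(\p{m},\tilde{\p{m}}_2)\setminus\rterm(\p{m},\tilde{\p{m}}_1)$ and leaves everything else, so that $\overline{\p{m}}_2 = \sum_{\p{t}\in\term(\p{m})\setminus(\rterm(\p{m},\tilde{\p{m}}_1)\cup\rterm(\p{m},\tilde{\p{m}}_2))}c_{\p{t}}\p{t}$, and so on inductively.

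The mechanism I expect to drive the argument: for a term $\p{t}\in\term(\p{m})$ with $\lt(\tilde{\p{m}}_2)\mid\p{t}$, reducing $\p{t}$ modulo $\tilde{\p{m}}_2$ produces (up to sign) the terms $\{(\p{t}/\lt(\tilde{\p{m}}_2))\,\p{t}' : \p{t}'\in\term(\tilde{\p{m}}_2)\}$, which as a set equals the terms of the Laplace product $\tilde{\p{m}}_2 \cdot \overline{\tilde{\p{m}}_2}$ restricted to the complement of $\tilde{\p{m}}_2$ with the appropriate row/column support — precisely a subset of $\rterm(\p{m},\tilde{\p{m}}_2)$. The crucial claim is: for elusive minors $\tilde{\p{m}}_1,\tilde{\p{m}}_2$ both strictly contained in $\p{m}$, one has $\rterm(\p{m},\tilde{\p{m}}_1)\cup\rterm(\p{m},\tilde{\p{m}}_2)$ closed under ``reduction by $\tilde{\p{m}}_2$'' in the sense that reducing any term of $\rterm(\p{m},\tilde{\p{m}}_1)$ modulo $\tilde{\p{m}}_2$ (if it is divisible by $\lt(\tilde{\p{m}}_2)$) produces only terms again in $\rterm(\p{m},\tilde{\p{m}}_1)\cup\rterm(\p{m},\tilde{\p{m}}_2)$, and symmetrically. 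This is where the structure of elusive minors — and specifically the ``corner'' description from Proposition~\ref{prop:corner} together with the relative position analysis of essential-set boxes from the proof of Theorem~\ref{thm:schubert-reduced-vex} — must be exploited: two elusive minors contained in a common minor $\p{m}$ of $I_w$ must have a constrained nesting of their row/column index sets (coming from the fact that their southeast corners lie in $\ess(w)$ with controlled ranks), which prevents the ``bad'' cancellation seen in the toy example $\p{m}=(\{1,2,3\},\{1,2,3\})$, $\pset{M}=\{(\{1,2\},\{1,2\}),(\{2,3\},\{2,3\})\}$ that does not arise from a genuine Schubert ideal. I would isolate this as a lemma: if $\tilde{\p{m}}_1,\tilde{\p{m}}_2\in\pset{E}_{\p{m}}$ then either $R(\tilde{\p{m}}_1)\cap R(\tilde{\p{m}}_2)$ and $C(\tilde{\p{m}}_1)\cap C(\tilde{\p{m}}_2)$ sit in a configuration where their Laplace-product supports interlock compatibly, or one is ``rooted'' northwest of the other in a way that makes $\rterm(\p{m},\tilde{\p{m}}_1)\cup\rterm(\p{m},\tilde{\p{m}}_2)$ reduction-closed.

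Once that closure lemma is in place, the proof concludes by a straightforward induction on $s = \#\pset{E}_{\p{m}}$: the base case $s\le 1$ is Proposition~\ref{prop:reduction}; the inductive step applies Proposition~\ref{prop:reduction}-style bookkeeping to pass from $\overline{\p{m}}_{s-1}$ to $\overline{\p{m}}_s$ using the closure lemma to guarantee that exactly the new removed terms $\rterm(\p{m},\tilde{\p{m}}_s)$ disappear (those already in $\bigcup_{i<s}\rterm(\p{m},\tilde{\p{m}}_i)$ being already absent) and nothing outside $\rterm(\p{m},\pset{E}_{\p{m}})$ is affected. I would also need to check that the final polynomial $\sum_{\p{t}\in\term(\p{m})\setminus\rterm(\p{m},\pset{E}_{\p{m}})}c_{\p{t}}\p{t}$ is actually reduced modulo $\pset{E}_{\p{m}}$ — i.e.\ that none of its surviving terms is divisible by any $\lt(\tilde{\p{m}}_i)$ — which follows because if $\lt(\tilde{\p{m}}_i)\mid\p{t}$ then $\p{t}\in\rterm(\p{m},\tilde{\p{m}}_i)$ by the same one-one correspondence as in Proposition~\ref{prop:reduction}, contradicting $\p{t}\notin\rterm(\p{m},\pset{E}_{\p{m}})$. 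The main obstacle is unquestionably the closure lemma: controlling how the index sets of two distinct elusive minors of the same $I_w$ overlap, and translating that combinatorics about $\ess(w)$ and Rothe diagrams into the statement that their removed-term sets behave well under mutual reduction. Everything else is bookkeeping on top of Proposition~\ref{prop:reduction}.
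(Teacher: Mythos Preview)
Your inductive skeleton is right, and the final check that the result is reduced modulo $\pset{E}_{\p{m}}$ is exactly as in the paper. But the ``closure lemma'' you isolate is not the correct key statement, and two of your choices (arbitrary order, working directly with elusive minors via Proposition~\ref{prop:corner}) would block the argument.

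First, your closure lemma as written is vacuous: reducing any term modulo $\tilde{\p{m}}_2$ produces, by definition, a linear combination of terms in the section of $\tilde{\p{m}}_2$ through that term, hence in $\rterm(\p{m},\tilde{\p{m}}_2)$; so the conclusion ``lands in $\rterm(\p{m},\tilde{\p{m}}_1)\cup\rterm(\p{m},\tilde{\p{m}}_2)$'' is automatic. The real obstruction is different. After removing $\rterm(\p{m},\pset{F}_k)$, a surviving term divisible by $\lt(\p{f}_{k+1})$ has the form $\lt(\p{f}_{k+1})\cdot\overline{\p{t}}$; subtracting the appropriate multiple of $\p{f}_{k+1}$ affects the whole section $\p{f}_{k+1,\overline{\p{t}}}$. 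If that section were only \emph{partially} inside $\rterm(\p{m},\pset{F}_k)$, the reduction would re-introduce already-removed terms. What the paper proves (Propositions~\ref{prop:sameSize} and~\ref{prop:diffSize}) is an all-or-nothing statement: if a section $\p{f}_{\overline{\p{t}}}$ meets $\rterm(\p{m},\p{g})$ for some single $\p{g}>_r\p{f}$, then $\p{f}_{\overline{\p{t}}}\subseteq\rterm(\p{m},\pset{M})$ for a whole family $\pset{M}$ of generators sharing the rows (or columns) of $\p{g}$. Note the section is \emph{not} contained in $\rterm(\p{m},\p{g})$ alone; a pairwise closure statement is too weak.

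Second, this all-or-nothing lemma is asymmetric: it needs $\p{g}>_r\p{f}$ for a specific total order $<_r$ (degree first, then a lexicographic tiebreak on row and column indices), so that every member of $\pset{M}$ is guaranteed to lie in $\pset{F}_k$. An arbitrary ordering will not do. Third, the family $\pset{M}$ consists of \emph{all} minors with prescribed rows (or columns) inside the relevant essential block, which is why the paper works with Fulton generators rather than elusive minors; the passage back to $\pset{E}_{\p{m}}$ is a separate step (Proposition~\ref{prop:sameRemovedSet}, via Lemma~\ref{lem:Fulton-elusive}) showing $\rterm(\p{m},\pset{F}_{\p{m}})=\rterm(\p{m},\pset{E}_{\p{m}})$ and $\lt(\pset{F}_{\p{m}})=\lt(\pset{E}_{\p{m}})$. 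Proposition~\ref{prop:corner} only exhibits one particular elusive minor with consecutive indices at each box of $D(w)$; it does not describe general elusive minors and is not the tool that controls the overlap of two arbitrary elusive minors inside $\p{m}$.
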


Structurally, Fulton generators are easier to use compared with elusive minors. So our overall strategy to prove the theorem above is to replace $\pset{E}_{\p{m}}$ with the set $\pset{F}_{\p{m}}$ of all Fulton generators strictly contained in $\p{m}$, prove equation~\eqref{eq:reductionM} for $\pset{F}_{\p{m}}$ and the equality of the removed sets $\rterm(\p{m},\pset{F}_{\p{m}}) = \rterm(\p{m},\pset{E}_{\p{m}})$. We will also prove the equality $\lt(\pset{F}_{\p{m}}) = \lt(\pset{E}_{\p{m}})$, and thus the replacement above still yields to reduction of $\p{m}$ towards the reduced \grobner basis.

We assign a total order $<_r$ to all the Fulton generators $\pset{F}$ of $I_w$ as follows. For any distinct $\p{m}_1, \p{m}_2 \in \pset{F}$, we say that $\p{m}_1 <_r \p{m}_2$ if one of the following conditions holds, where $R(\p{m}_1)_i$ denotes the $i$-th smallest element in $R(\p{m}_1)$ and $C(\p{m}_1)_i$ is similarly defined:
\begin{itemize}
\item[(1)] $\deg(\p{m}_1) < \deg(\p{m}_2)$; 
\item[(2)] $\deg(\p{m}_1) = \deg(\p{m}_2)$, and there exists an integer $k \in [\deg(\p{m}_1)]$ such that $R(\p{m}_1)_i = R(\p{m}_2)_i$ for $i \in [k-1]$ and $R(\p{m}_1)_k > R(\p{m}_2)_k$;
\item[(3)] $R(\p{m}_1) = R(\p{m}_2)$, and there exists an integer $k \in [\deg(\p{m}_1)]$ such that $C(\p{m}_1)_i = C(\p{m}_2)_i$ for $i \in [k-1]$ and $C(\p{m}_1)_k > C(\p{m}_2)_k$.
\end{itemize}

One can easily check that $<_r$ is a total order on $\pset{F}$, and we will use $<_r$ to order the Fulton generators in $\pset{F}_{\p{m}}$ to reduce $\p{m}$. To be specific, for a given elusive minor $\p{m}$ of $I_w$, let $\pset{F}_{\p{m}} = \{\p{f}_1, \ldots, \p{f}_s\}$ be the set of Fulton generators strictly contained in $\p{m}$ such that $\p{f}_1 >_r \cdots >_r \p{f}_s$. Then we reduce $\p{m}$ modulo $\pset{F}_{\p{m}}$ in the following order
$$   \p{m}\xrightarrow[*]{\p{f}_1} \p{m}_1  \xrightarrow[*]{\p{f}_2} \cdots \xrightarrow[*]{\p{f}_s} \p{m}_s,$$
and prove that $\p{m}_s$ is the reduction result of $\p{m}$ modulo $\pset{F}_{\p{m}}$ and it has the form stated in equation~\eqref{eq:reductionM}. To prove this we need some preparations. 

\begin{lemma}\label{lem:smaller}
  Let $\p{m}$ be a minor containing two minors $\p{f}_1$ and $\p{f}_2$. If $R(\p{f}_2) \subseteq R(\p{f}_1)$ and $C(\p{f}_2) \not \subseteq C(\p{f}_1)$ or $C(\p{f}_2) \subseteq C(\p{f}_1)$ and $R(\p{f}_2) \not \subseteq R(\p{f}_1)$, then $\rterm(\p{m}, \p{f}_1) \cap \rterm(\p{m}, \p{f}_2) = \emptyset$. 
\end{lemma}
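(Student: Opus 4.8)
The plan is to translate everything into the language of row‑to‑column matchings and then read the conclusion off directly. Recall that a term $\p{u}$ of the minor $\p{m}$ is a product of $\#R(\p{m})$ variables taken from pairwise distinct rows and columns of the submatrix indexed by $(R(\p{m}),C(\p{m}))$; hence $\p{u}$ corresponds bijectively to a bijection $\sigma_{\p{u}}\colon R(\p{m})\to C(\p{m})$, where $x_{i,\sigma_{\p{u}}(i)}$ is the factor of $\p{u}$ lying in row $i$. The first step is to record that, for any minor $\p{f}$ contained in $\p{m}$ with complement $\overline{\p{f}}$ in $\p{m}$ as in Definition~\ref{def:remove},
$$ \rterm(\p{m},\p{f}) = \{\p{u}\in\term(\p{m}) : \sigma_{\p{u}}(R(\p{f})) = C(\p{f})\}. $$
This merely unpacks the definition: a term of $\p{f}$ is a matching $R(\p{f})\to C(\p{f})$, a term of $\overline{\p{f}}$ is a matching $R(\p{m})\setminus R(\p{f})\to C(\p{m})\setminus C(\p{f})$, and the product of the two is precisely a term $\p{u}$ of $\p{m}$ whose matching $\sigma_{\p{u}}$ sends $R(\p{f})$ onto $C(\p{f})$; conversely every such $\p{u}$ factors in this way, and uniquely.

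With this description in hand the lemma follows in one line. Suppose towards a contradiction that $\p{u}\in\rterm(\p{m},\p{f}_1)\cap\rterm(\p{m},\p{f}_2)$, and write $\sigma:=\sigma_{\p{u}}$, so that $\sigma(R(\p{f}_1))=C(\p{f}_1)$ and $\sigma(R(\p{f}_2))=C(\p{f}_2)$ simultaneously. In the first case of the hypothesis, $R(\p{f}_2)\subseteq R(\p{f}_1)$ yields $C(\p{f}_2)=\sigma(R(\p{f}_2))\subseteq\sigma(R(\p{f}_1))=C(\p{f}_1)$, contradicting $C(\p{f}_2)\not\subseteq C(\p{f}_1)$. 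In the second case, applying $\sigma^{-1}$ to $C(\p{f}_2)\subseteq C(\p{f}_1)$ gives $R(\p{f}_2)=\sigma^{-1}(C(\p{f}_2))\subseteq\sigma^{-1}(C(\p{f}_1))=R(\p{f}_1)$, contradicting $R(\p{f}_2)\not\subseteq R(\p{f}_1)$. Hence no common removed term exists.

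I expect the only place requiring any care — the main (and very mild) obstacle — to be making the term/matching dictionary precise: namely that $\term(\p{m})$ is genuinely in bijection with the matchings $R(\p{m})\to C(\p{m})$, and that the product map $\term(\p{f})\times\term(\overline{\p{f}})\to\term(\p{m})$ is injective with image exactly the matchings restricting correctly on $R(\p{f})$. This is standard determinant bookkeeping. Note that the argument is perfectly symmetric in rows and columns and uses nothing about the term order, so it holds verbatim in both the anti‑diagonal and the diagonal settings; this disjointness is the combinatorial core that later lets one glue together the successive reductions of $\p{m}$ by the members of $\pset{F}_{\p{m}}$ into the single formula~\eqref{eq:reductionM}.
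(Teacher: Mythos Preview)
Your proof is correct. The characterization $\rterm(\p{m},\p{f})=\{\p{u}\in\term(\p{m}):\sigma_{\p{u}}(R(\p{f}))=C(\p{f})\}$ is exactly right, and once it is in place the two cases are immediate consequences of injectivity of $\sigma_{\p{u}}$.

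The paper takes a slightly more concrete route: instead of the global matching description, it fixes a witness column $j\in C(\p{f}_2)\setminus C(\p{f}_1)$ and observes that every term in $\rterm(\p{m},\p{f}_2)$ contains a factor $x_{ij}$ with $i\in R(\p{f}_2)\subseteq R(\p{f}_1)$, whereas no term in $\rterm(\p{m},\p{f}_1)$ can contain any such $x_{ij}$ (since $i\in R(\p{f}_1)$ forces the column index into $C(\p{f}_1)$, and $i\notin R(\overline{\p{f}_1})$). Your argument and the paper's are really the same observation read in two directions: the paper tracks the row that $\sigma^{-1}$ assigns to one specific column, while you work with the whole bijection at once. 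Your packaging is a bit cleaner and makes the row/column symmetry manifest; the paper's version avoids setting up the matching dictionary but is terser at the final step.
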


\begin{proof}
  We only prove the former case when $R(\p{f}_2) \subseteq R(\p{f}_1)$ and $C(\p{f}_2) \not \subseteq C(\p{f}_1)$, and the proof for the latter is similar. Since $C(\p{f}_2) \not \subseteq C(\p{f}_1)$, there exists an integer $j \in C(\p{f}_2) \setminus C(\p{f}_1)$. Then for any $i \in R(\p{f}_2) \subseteq R(\p{f}_1)$, $x_{ij}$ is in $R(\p{f}_2) \times C(\p{f}_2)$ but in neither $R(\p{f}_1) \times C(\p{f}_1)$ nor $R(\overline{\p{f}_1}) \times C(\overline{\p{f}_1})$. This implies that any term in $\rterm(\p{m}, \p{f}_1)$ does not involve $x_{ij}$, and by the arbitrariness of $i \in R(\p{f}_2)$ the conclusion follows. 
\end{proof}

\begin{corollary}\label{cor:sameRow}
  Let $\p{m}$ be a minor containing two distinct minors $\p{f}_1$ and $\p{f}_2$ of the same size. If $R(\p{f}_1) = R(\p{f}_2)$ or $C(\p{f}_1) = C(\p{f}_2)$, then $\rterm(\p{m}, \p{f}_1) \cap \rterm(\p{m}, \p{f}_2) = \emptyset$. 
\end{corollary}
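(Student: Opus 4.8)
The plan is to obtain Corollary~\ref{cor:sameRow} as an immediate consequence of Lemma~\ref{lem:smaller}, the only real content being to check that the hypotheses of that lemma are met in each of the two stated cases. I would begin with the case $R(\p{f}_1) = R(\p{f}_2)$. Here $\p{f}_1$ and $\p{f}_2$ are distinct minors of the same size sharing the same row index set, so their column index sets $C(\p{f}_1)$ and $C(\p{f}_2)$ are distinct subsets of $[n]$ of the \emph{same} cardinality. The one elementary fact I would invoke is that two distinct finite sets of equal cardinality cannot be related by inclusion; hence $C(\p{f}_2) \not\subseteq C(\p{f}_1)$. Together with $R(\p{f}_2) = R(\p{f}_1) \subseteq R(\p{f}_1)$, this is exactly the first alternative in the hypothesis of Lemma~\ref{lem:smaller}, and that lemma then gives $\rterm(\p{m}, \p{f}_1) \cap \rterm(\p{m}, \p{f}_2) = \emptyset$.

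The case $C(\p{f}_1) = C(\p{f}_2)$ is handled symmetrically: now $R(\p{f}_1)$ and $R(\p{f}_2)$ are distinct sets of equal cardinality, so $R(\p{f}_2) \not\subseteq R(\p{f}_1)$, while $C(\p{f}_2) = C(\p{f}_1) \subseteq C(\p{f}_1)$. This matches the second alternative in Lemma~\ref{lem:smaller}, and again we conclude $\rterm(\p{m}, \p{f}_1) \cap \rterm(\p{m}, \p{f}_2) = \emptyset$.

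There is essentially no obstacle in this argument; the entire proof should be two or three lines. The only point that warrants a word of care is making explicit that "distinct" together with "of the same size" forces the column sets (respectively row sets) to be genuinely incomparable and not merely unequal, which is precisely the cardinality observation above — spelling this out is what makes the reduction to Lemma~\ref{lem:smaller} airtight.
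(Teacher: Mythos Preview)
Your proposal is correct and matches the paper's approach: the corollary is stated immediately after Lemma~\ref{lem:smaller} with no proof given, so it is intended to follow directly from that lemma exactly as you describe. Your observation that distinct sets of equal cardinality cannot be comparable under inclusion is the only detail needed to verify the hypotheses, and you have identified it correctly.
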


Recall that for $\p{f}$ contained in $\p{m}$, the removed terms of $\p{m}$ by $\p{f}$ is $\rterm(\p{m}, \p{f}) = \{\tilde{\p{t}} \overline{\p{t}} : \tilde{\p{t}}\in \term(\p{f}), \overline{\p{t}} \in \term(\overline{\p{f}})\} = \bigcup_{\overline{\p{t}}\in \term(\overline{\p{f}})} \{\tilde{\p{t}} \overline{\p{t}} : \tilde{\p{t}}\in \term(\p{f})\}$. For a fixed $\overline{\p{t}}\in \term(\overline{\p{f}})$, the subset $\{\tilde{\p{t}} \overline{\p{t}} : \tilde{\p{t}} \in \term(\p{f})\}$ is called the $\overline{\p{t}}$-\emph{section} of $\p{f}$ in $\p{m}$, denoted by $\p{f}_{\overline{\p{t}}}^{\p{m}}$ and by $\p{f}_{\overline{\p{t}}}$ for simplicity if $\p{m}$ is known from the context. In the following we study the relationships between sections of a minor $\p{f}$ in $\p{m}$ and the removed set of $\p{m}$ by the minors greater than $\p{f}$ w.r.t. $<_r$ for Schubert determinantal ideals. We first investigate the case of minors of the same size as $\p{f}$ and then that of minors of larger sizes than $\p{f}$.

\begin{proposition}\label{prop:sameSize}
  Let $w \in S_n$ and $\p{m}$ be an elusive minor of $I_w$ reducible by a Fulton generator $\p{f}$. Suppose that there exists a term $\p{t} \in \rterm(\p{m}, \p{f})$ which appears in $\rterm(\p{m}, \p{g})$ for some other Fulton generator $\p{g}$ such that $\deg(\p{g}) = \deg(\p{f})$ and $\p{g} >_r \p{f}$. Write $\p{t} = \p{\tilde{t}} \cdot \overline{\p{t}}$ for $\tilde{\p{t}} \in \term(\p{f})$ and $\overline{\p{t}} \in \term(\overline{\p{f}})$ and let $\pset{M} = \{\mbox{Fulton generator } \tilde{\p{g}} ~|~ R(\tilde{\p{g}}) = R(\p{g}), C(\tilde{\p{g}}) \subseteq C(\p{m})\}$.  Then $\p{f}_{\overline{\p{t}}} \subseteq \rterm(\p{m}, \pset{M})$. 
\end{proposition}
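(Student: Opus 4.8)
The plan is to recast everything in terms of transversals. A term of the minor $\p{m}$ is the same data as a bijection $\sigma\colon R(\p{m})\to C(\p{m})$; for any minor $\tilde{\p{g}}$ contained in $\p{m}$, the term of $\sigma$ lies in $\rterm(\p{m},\tilde{\p{g}})$ exactly when $\sigma(R(\tilde{\p{g}}))=C(\tilde{\p{g}})$; and the $\overline{\p{t}}$-section $\p{f}_{\overline{\p{t}}}^{\p{m}}$ consists precisely of the transversals $\tau$ that coincide with the transversal of $\p{t}$ on $R(\overline{\p{f}})$ and restrict to an arbitrary bijection $R(\p{f})\to C(\p{f})$. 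Under this dictionary the proposition is equivalent to the single statement that, for every such $\tau$, the minor $\tilde{\p{g}}_\tau:=(R(\p{g}),\tau(R(\p{g})))$ is a Fulton generator of $I_w$: its column set automatically lies in $\tau(R(\p{m}))=C(\p{m})$ and it is strictly contained in $\p{m}$ since $\deg\p{g}<\deg\p{m}$, so it belongs to $\pset{M}$ once it is a Fulton generator, and then $\tau\in\rterm(\p{m},\tilde{\p{g}}_\tau)\subseteq\rterm(\p{m},\pset{M})$.

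First I would dispose of the easy reductions. Let $\sigma$ be the transversal of $\p{t}$, so $\sigma(R(\p{f}))=C(\p{f})$ and $\sigma(R(\p{g}))=C(\p{g})$. Since $\p{t}\in\rterm(\p{m},\p{f})\cap\rterm(\p{m},\p{g})$ and $\deg\p{f}=\deg\p{g}$, Corollary~\ref{cor:sameRow} forces $R(\p{f})\neq R(\p{g})$ and $C(\p{f})\neq C(\p{g})$, and together with $\p{f}<_r\p{g}$ this places us in case~(2) of the definition of $<_r$, i.e.\ at the first index where $R(\p{f})$ and $R(\p{g})$ disagree, $R(\p{f})$ carries the larger entry. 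Because $\sigma$ is a bijection it maps intersections to intersections, so $\sigma(R(\p{f})\cap R(\p{g}))=C(\p{f})\cap C(\p{g})$ and $\sigma(R(\overline{\p{f}})\cap R(\p{g}))=C(\overline{\p{f}})\cap C(\p{g})$; hence for any $\tau$ in the section
\[
  \tau(R(\p{g}))=\tau\!\big(R(\p{f})\cap R(\p{g})\big)\ \sqcup\ \big(C(\overline{\p{f}})\cap C(\p{g})\big),
\]
the second block sitting inside $C(\p{g})$ and the first being some subset of $C(\p{f})$ of size $\#\big(C(\p{f})\cap C(\p{g})\big)$. So $\tilde{\p{g}}_\tau$ is obtained from the Fulton generator $\p{g}$ by keeping its rows and swapping the $C(\p{f})$-part of its columns for an equally large subset of $C(\p{f})$.

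Next I would collect positional information from the elusiveness of $\p{m}$. Let $(p,q)$, $(p_f,q_f)$, $(p_g,q_g)$ be essential boxes witnessing $\p{m}$, $\p{f}$, $\p{g}$ as Fulton generators; then $\rank(w^T_{p_fq_f})=\rank(w^T_{p_gq_g})=\deg\p{f}-1<\deg\p{m}-1=\rank(w^T_{pq})$, so, since $\p{m}$ is elusive, it attends neither $X_{p_fq_f}$ nor $X_{p_gq_g}$. Combining this with the fact that $\p{f}$ and $\p{g}$ are contained in $\p{m}$ yields $\max R(\p{f}),\max R(\p{g})<\max R(\p{m})$ and $\max C(\p{f}),\max C(\p{g})<\max C(\p{m})$; in particular $(p_f,q_f)$ and $(p_g,q_g)$ lie strictly to the northwest of $(p,q)$. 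It then suffices to produce, for each $\tau$, an essential box dominating the bounding box $(\max R(\p{g}),\max\tau(R(\p{g})))$ of $\tilde{\p{g}}_\tau$ and having the same rank $\deg\p{g}-1$, which exhibits $\tilde{\p{g}}_\tau$ as a minor of the corresponding $X_{p'q'}$ of size $\rank(w^T_{p'q'})+1$.

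The hard part will be exactly this last step. When $q_g\ge q_f$ it is immediate: the columns of $\tilde{\p{g}}_\tau$ fall into a subset of $C(\p{f})\subseteq[q_f]\subseteq[q_g]$ and a subset of $C(\p{g})\subseteq[q_g]$, while its rows lie in $[p_g]$, so the box $(p_g,q_g)$ already works. The difficulty is the case $q_f>q_g$, where one must rule out a large column of $\p{f}$ entering $\tau(R(\p{g}))$ past $q_g$ without an essential box available to absorb it. I expect the resolution to come from pushing the first-difference comparison of $R(\p{f})$ and $R(\p{g})$ through $\sigma$ and combining it with the rank conditions that define the Fulton generators $\p{f}$ and $\p{g}$ together with the strict-northwest positions established above — concretely, to show that the largest $\#\big(C(\p{f})\cap C(\p{g})\big)$ elements of $C(\p{f})$, which are precisely the columns of $\p{f}$ that can occur in $\tau(R(\p{g}))$, are all at most $q_g$, equivalently that $C(\p{f})\cap C(\p{g})$ is the top part of $C(\p{f})$. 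Granting this, running the argument over all $\tau\in\p{f}_{\overline{\p{t}}}^{\p{m}}$ gives $\p{f}_{\overline{\p{t}}}^{\p{m}}\subseteq\rterm(\p{m},\pset{M})$.
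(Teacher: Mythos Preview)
Your transversal reformulation is equivalent to what the paper does, just packaged differently. The paper introduces the column set $C=\bigcup_{\tilde{\p{g}}\in\pset{M}}C(\tilde{\p{g}})$, splits $\overline{\p{t}}=\overline{\p{t}}_1\cdot\overline{\p{t}}_2$ according to whether each variable lies in $\overline{R(\p{g})}\times\overline{C}$, forms the auxiliary minor $\tilde{\p{m}}$ with $R(\tilde{\p{m}})=R(\p{f})\cup R(\overline{\p{t}}_2)$ and $C(\tilde{\p{m}})=C(\p{f})\cup C(\overline{\p{t}}_2)$, checks $R(\p{g})\subseteq R(\tilde{\p{m}})$ and $C(\tilde{\p{m}})\subseteq C$, and then Laplace-expands $\tilde{\p{m}}$ along the rows $R(\p{g})$. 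Every summand $\p{h}$ in that expansion has $R(\p{h})=R(\p{g})$ and $C(\p{h})\subseteq C(\tilde{\p{m}})\subseteq C$, hence $\p{h}\in\pset{M}$; multiplying by $\overline{\p{t}}_1$ gives the inclusion. If you unwind this transversal by transversal, the $\p{h}$ attached to a given $\tau$ is exactly your $\tilde{\p{g}}_\tau$, so the two arguments reduce to the same verification.

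There is, however, a genuine gap in your version that you do not close. In the case $q_f>q_g$ you claim that ``the largest $\#(C(\p{f})\cap C(\p{g}))$ elements of $C(\p{f})$ \dots\ are precisely the columns of $\p{f}$ that can occur in $\tau(R(\p{g}))$''. This is false: $\tau|_{R(\p{f})}$ is an \emph{arbitrary} bijection $R(\p{f})\to C(\p{f})$, so $\tau(R(\p{f})\cap R(\p{g}))$ can be \emph{any} subset of $C(\p{f})$ of the right size, not just the top segment. Consequently the statement you would need is that \emph{every} column of $C(\p{f})$ lies in the admissible set --- which is precisely the paper's inclusion $C(\p{f})\subseteq C$. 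Once that inclusion holds there is no case split on $q_f$ versus $q_g$ at all. The paper obtains $C(\p{f})\subseteq C$ from condition~(2) in the definition of $<_r$ together with the block structure of Fulton generators; your detour through the elusiveness of $\p{m}$ and the northwest positions of $(p_f,q_f),(p_g,q_g)$ relative to $(p,q)$ is not used in the paper's argument and does not furnish the missing step.
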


\begin{proof}
  Let $R = R(\p{g})$ and $C = \bigcup_{\tilde{\p{g}} \in \pset{M}} C(\tilde{\p{g}})$. Write $\overline{\p{t}} = \overline{\p{t}}_1 \cdot \overline{\p{t}}_2$, where $\overline{\p{t}}_1$ contains the variables belonging to the sub-region of $\overline{R} \times \overline{C}$ (and thus to $(\overline{R} \times \overline{C}) \cap (R(\overline{\p{f}}) \times C(\overline{\p{f}}))$) and $\overline{\p{t}}_2$ contains those which do not. For each variable $x_{ij}$ in $\p{t} \in \rterm(\p{m}, \p{g})$, we know that if $i \in R$ then $j \in C(\p{g}) \subseteq C$; furthermore, for each $i \in R$, the corresponding variable $x_{ij}$ in $\p{t} \in \rterm(\p{m}, \p{f})$ cannot be in $\overline{\p{t}}_1$ and thus is either in $\tilde{\p{t}}$ or $\overline{\p{t}}_2$, and this implies that $R \subseteq R(\p{f}) \cup R(\overline{\p{t}}_2)$. For each $x_{ij}$ in $\overline{\p{t}}_2$, if $i \not \in R$ then $j \not \in \overline{C}$ and thus $j \in C$. By Corollary~\ref{cor:sameRow} we have $R \neq R(\p{f})$, and with $\p{g} >_r \p{f}$ we know that condition~(2) for the definition of $<_r$ holds. Then the structure of Fulton generators implies that $C(\p{f}) \subseteq C$.

  Consider a new minor $\tilde{\p{m}}$ with $R(\tilde{\p{m}}) = R(\p{f}) \cup R(\overline{\p{t}}_2)$ and $C(\tilde{\p{m}}) = C(\p{f}) \cup C(\overline{\p{t}}_2)$. Since $\overline{\p{t}}_2$ only contains variables in $R(\overline{\p{f}}) \times C(\overline{\p{f}})$, we know that $R(\p{f}) \cap R(\overline{\p{t}}_2)$ and $C(\p{f}) \cap C(\overline{\p{t}}_2)$ are both empty, and thus $\tilde{\p{m}}$ is indeed a minor. The analyses above imply the inclusions $C(\tilde{\p{m}}) \subseteq C$ and $R \subseteq R(\tilde{\p{m}})$. Now apply the generalized Laplace expansion to compute $\tilde{\p{m}}$ w.r.t. the rows $R$, we have $\tilde{\p{m}} = \sum_{R(\p{h}) = R} (-1)^{\delta(\p{h})}\p{h}\overline{\p{h}}$, where $\overline{\p{h}}$ is the complement of $\p{h}$ in $\tilde{\p{m}}$. Then
  $$\{\tilde{\p{t}} \cdot \overline{\p{t}}_2 ~|~ \tilde{\p{t}}\in \term(\p{f})\} \subseteq \term(\tilde{\p{m}}) \subseteq \cup_{R(\p{h}) = R}\term(\p{h}\overline{\p{h}}) = \rterm(\tilde{\p{m}}, \cup_{R(\p{h}) = R}\p{h}).$$
  Note that $\bigcup_{R(\p{h}) = R}\p{h} = \pset{M} \cap (R(\tilde{\p{m}}) \times C(\tilde{\p{m}}))$, with all the variables in $\overline{\p{t}}_1$ lying outside $R(\tilde{\p{m}}) \times C(\tilde{\p{m}})$ we have
  \begin{equation*}
      \begin{split}
   \{\tilde{\p{t}} \overline{\p{t}}_2 \cdot \overline{\p{t}}_1 ~|~ \tilde{\p{t}}\in \term(\p{f})\} & \subseteq \{\tilde{\p{t}} \cdot \overline{\p{t}}_1 ~|~ \tilde{\p{t}} \in \rterm(\tilde{\p{m}}, \cup_{R(\p{h}) = R}\p{h})\}\\
   & \subseteq \rterm(\p{m}, \cup_{R(\p{h}) = R}\p{h}) \subseteq \rterm(\p{m}, \pset{M}),       
      \end{split}
  \end{equation*}
  and this finishes the proof. 
\end{proof}

\begin{proposition}\label{prop:diffSize}
  Let $w \in S_n$ and $\p{m}$ be an elusive minor of $I_w$ reducible by a Fulton generator $\p{f}$. Suppose that there exists a term $\p{t} \in \rterm(\p{m}, \p{f})$ which appears in $\rterm(\p{m}, \p{g})$ for some other Fulton generator $\p{g}$ such that $\deg(\p{g}) > \deg(\p{f})$. Write $\p{t} = \p{\tilde{t}} \cdot \overline{\p{t}}$ for $\tilde{\p{t}} \in \term(\p{f})$ and $\overline{\p{t}} \in \term(\overline{\p{f}})$ and let $(p_{\p{f}}, q_{\p{f}})$ and $(p_{\p{g}}, q_{\p{g}})$ be the corresponding essential sets for $\p{f}$ and $\p{g}$ respectively. For the three cases of the relative positions of $(p_{\p{f}}, q_{\p{f}})$ and $(p_{\p{g}}, q_{\p{g}})$: (1) $p_{\p{f}} \leq p_{\p{g}}$ and $q_{\p{f}} \leq q_{\p{g}}$ (equality cannot hold for both), (2) $p_{\p{f}} > p_{\p{g}}$ and $q_{\p{f}} < q_{\p{g}}$, and (3) $p_{\p{f}} < p_{\p{g}}$ and $q_{\p{f}} > q_{\p{g}}$, let
  \begin{equation*}
    \pset{M} := \left\{
    \begin{tabular}[l]{ll}
       $\{\mbox{Fulton generator } \tilde{\p{g}} ~|~ R(\tilde{\p{g}}) = R(\p{g}), C(\tilde{\p{g}}) \subseteq C(\p{m})\}$ & Cases~(1-2), \\
      $\{\mbox{Fulton generator } \tilde{\p{g}} ~|~ C(\tilde{\p{g}}) = C(\p{g}), R(\tilde{\p{g}}) \subseteq R(\p{m})\}$ & Case~(3).
    \end{tabular}
    \right.
  \end{equation*}
Then $\p{f}_{\overline{\p{t}}} \subseteq \rterm(\p{m}, \pset{M})$.   
\end{proposition}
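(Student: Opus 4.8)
The plan is to mirror, nearly verbatim, the argument of Proposition~\ref{prop:sameSize}, handling Cases~(1) and (2) together by a row-based argument and deducing Case~(3) from it by transposing the roles of rows and columns. Throughout I would write $(p_{\p{f}}, q_{\p{f}})$ and $(p_{\p{g}}, q_{\p{g}})$ for the essential cells of $\p{f}$ and $\p{g}$, and I would keep in mind that $\p{g}$ is contained in $\p{m}$ (this is built into the hypothesis $\p{t}\in\rterm(\p{m},\p{g})$). A preliminary remark: the hypothesis $\deg(\p{g})>\deg(\p{f})$ already forces the relative position of the two essential cells into one of the three listed cases, since if $p_{\p{f}}\geq p_{\p{g}}$ and $q_{\p{f}}\geq q_{\p{g}}$ held, then $X_{p_{\p{g}}q_{\p{g}}}\subseteq X_{p_{\p{f}}q_{\p{f}}}$ would give $\rank(w^T_{p_{\p{g}}q_{\p{g}}})\leq\rank(w^T_{p_{\p{f}}q_{\p{f}}})$, contradicting $\deg(\p{g})>\deg(\p{f})$; the remaining possibilities are exactly (1)--(3).

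For Cases~(1)--(2) I would set $R=R(\p{g})$ and $C=\bigcup_{\tilde{\p{g}}\in\pset{M}}C(\tilde{\p{g}})$, and first observe that $C=C(\p{m})\cap[q_{\p{g}}]$: any size-$\deg(\p{g})$ subset of $C(\p{m})\cap[q_{\p{g}}]$, together with $R(\p{g})\subseteq[p_{\p{g}}]$, determines a $\deg(\p{g})$-minor in $X_{p_{\p{g}}q_{\p{g}}}$ contained in $\p{m}$, i.e.\ an element of $\pset{M}$. The one place where the present proof departs from that of Proposition~\ref{prop:sameSize} is that the inclusion $C(\p{f})\subseteq C$ no longer needs the total order $<_r$: in Cases~(1)--(2) we have $q_{\p{f}}\leq q_{\p{g}}$, hence $C(\p{f})\subseteq[q_{\p{f}}]\subseteq[q_{\p{g}}]$, and $C(\p{f})\subseteq C(\p{m})$ because $\p{f}$ is contained in $\p{m}$. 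I would then split $\overline{\p{t}}=\overline{\p{t}}_1\cdot\overline{\p{t}}_2$ exactly as before, with $\overline{\p{t}}_1$ collecting the variables lying in $\overline{R}\times\overline{C}$; using that $\p{t}\in\rterm(\p{m},\p{g})$ forces every variable of $\p{t}$ sitting in a row of $R$ to sit in a column of $C(\p{g})\subseteq C$, one obtains $R\subseteq R(\p{f})\cup R(\overline{\p{t}}_2)$ and $C(\overline{\p{t}}_2)\subseteq C(\p{m})\cap[q_{\p{g}}]$, hence $C(\p{f})\cup C(\overline{\p{t}}_2)\subseteq C$.

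From here the argument copies Proposition~\ref{prop:sameSize}: form the minor $\tilde{\p{m}}$ with $R(\tilde{\p{m}})=R(\p{f})\cup R(\overline{\p{t}}_2)$ and $C(\tilde{\p{m}})=C(\p{f})\cup C(\overline{\p{t}}_2)$ (genuinely a minor, since $\overline{\p{t}}_2$ lives inside $R(\overline{\p{f}})\times C(\overline{\p{f}})$, and contained in $\p{m}$, with $R\subseteq R(\tilde{\p{m}})$ and $C(\tilde{\p{m}})\subseteq C$); apply the generalized Laplace expansion of $\tilde{\p{m}}$ along the rows of $R$; note that the sub-minors $\p{h}$ with $R(\p{h})=R$ occurring there are precisely the members of $\pset{M}$ that fit inside $\tilde{\p{m}}$ (this uses $C(\tilde{\p{m}})\subseteq C(\p{m})\cap[q_{\p{g}}]$, so that each such $\p{h}$ is an honest Fulton generator from $(p_{\p{g}},q_{\p{g}})$ contained in $\p{m}$); and finally chase the inclusions
\[
\{\tilde{\p{t}}\,\overline{\p{t}}_2 : \tilde{\p{t}}\in\term(\p{f})\}\ \subseteq\ \term(\tilde{\p{m}})\ \subseteq\ \rterm\bigl(\tilde{\p{m}},\ \bigcup_{R(\p{h})=R}\p{h}\bigr),
\]
and then multiply through by $\overline{\p{t}}_1$ to land in $\rterm(\p{m},\bigcup_{R(\p{h})=R}\p{h})\subseteq\rterm(\p{m},\pset{M})$; since $\tilde{\p{t}}\,\overline{\p{t}}=\tilde{\p{t}}\,\overline{\p{t}}_2\,\overline{\p{t}}_1$, this is exactly $\p{f}_{\overline{\p{t}}}\subseteq\rterm(\p{m},\pset{M})$. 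Case~(3) is the transpose: there $p_{\p{f}}<p_{\p{g}}$ gives $R(\p{f})\subseteq[p_{\p{g}}]$ for free, and one runs the identical argument with $C=C(\p{g})$, $R=\bigcup_{\tilde{\p{g}}\in\pset{M}}R(\tilde{\p{g}})$, and a Laplace expansion along columns. I expect the main obstacle to be bookkeeping in this last step rather than anything conceptual — certifying that the sub-minors produced by expanding $\tilde{\p{m}}$ are members of $\pset{M}$, and checking that appending the factor $\overline{\p{t}}_1$ (whose rows and columns are disjoint from those of $\tilde{\p{m}}$ and complete it to $\p{m}$) converts an element of $\rterm(\tilde{\p{m}},\cdot)$ into one of $\rterm(\p{m},\cdot)$. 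The three-case split itself is routine once one notices that each case singles out which coordinate of the two essential cells nests.
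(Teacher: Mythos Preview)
Your proposal is correct and follows exactly the approach the paper takes: the paper's own proof consists of a single sentence stating that the argument is the same as that of Proposition~\ref{prop:sameSize}, except that in Case~(3) one applies the generalized Laplace expansion with respect to the columns $C(\p{g})$ rather than the rows $R(\p{g})$. Your write-up supplies precisely the details this sentence elides, including the useful observation that the inclusion $C(\p{f})\subseteq C$ (respectively $R(\p{f})\subseteq R$ in Case~(3)) now comes for free from $q_{\p{f}}\leq q_{\p{g}}$ (respectively $p_{\p{f}}\leq p_{\p{g}}$) rather than from the order $<_r$.
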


\begin{proof}
 The proof is similar to that for Proposition~\ref{prop:sameSize}, except that in case~(3) we apply generalized Laplace expansion w.r.t. the columns $C(\p{g})$. 
\end{proof}

Next we consider the relationship between the removed sets of Fulton generators and elusive minors contained in a minor. 

\begin{lemma}\label{lem:Fulton-elusive}
  Let $w \in S_n$ and $\p{m}$ be an elusive minor of $I_w$ reducible by a Fulton generator $\p{f}$. If $\p{f}$ is not an elusive minor, then there exists a set $\pset{M}$ of Fulton generators contained in $\p{f}$ such that all the minors in $\pset{M}$ are of the same degree smaller than $\deg(\p{f})$, $\p{f} \in \bases{\pset{M}}$, and $\rterm(\p{m}, \p{f}) \subseteq \rterm(\p{m}, \pset{M})$. 
\end{lemma}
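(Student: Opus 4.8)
The lemma asks: given an elusive minor $\p{m}$ reducible by a Fulton generator $\p{f}$ that is itself not elusive, produce a set $\pset{M}$ of Fulton generators contained in $\p{f}$, all of common degree $d < \deg(\p{f})$, with $\p{f} \in \bases{\pset{M}}$ and $\rterm(\p{m},\p{f}) \subseteq \rterm(\p{m},\pset{M})$. The natural source of such an $\pset{M}$ is the definition of ``elusive'': since $\p{f}$ (a Fulton generator for some $(p_{\p{f}},q_{\p{f}}) \in \ess(w)$) is not elusive, it attends some block $X_{\tilde p \tilde q}$ with $(\tilde p,\tilde q) \in \ess(w)$ and $\rank(w^T_{\tilde p \tilde q}) < \rank(w^T_{p_{\p{f}} q_{\p{f}}})$; set $d = \rank(w^T_{\tilde p\tilde q})+1$. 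By Definition~\ref{def:attend} (say via condition (1), the row case; the column case is symmetric), $\p{f}$ has at least $d$ of its rows lying inside $X_{\tilde p\tilde q}$ and exactly $\deg(\p{f})$ of its columns in $[\tilde q]$ — i.e.\ $\p{f}$ has $d$ full rows sitting in the block.

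**The plan.** First I would fix the row index subset $R_0 \subseteq R(\p{f})$ of size $d$ consisting of rows of $\p{f}$ lying in $X_{\tilde p\tilde q}$ (so each $x_{ij}$ with $i \in R_0$, $j \in C(\p{f})$ lies in $X_{\tilde p\tilde q}$), and let $\pset{M}$ be the set of all $d$-minors with row set $R_0$ and column set a $d$-subset of $C(\p{f})$. Every such minor is a $d$-minor inside $X_{\tilde p\tilde q}$, hence a Fulton generator; all are contained in $\p{f}$; all have common degree $d < \deg(\p{f})$. That $\p{f} \in \bases{\pset{M}}$ is exactly the generalized Laplace expansion of $\p{f}$ along the rows $R_0$: $\p{f} = \sum_{R(\p{h})=R_0}(-1)^{\delta(\p{h})}\p{h}\,\overline{\p{h}}$, where the sum is over $\p{h} \in \pset{M}$ and $\overline{\p{h}}$ is the complement of $\p{h}$ in $\p{f}$. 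The real content is the inclusion $\rterm(\p{m},\p{f}) \subseteq \rterm(\p{m},\pset{M})$, and for this I would argue at the level of individual terms, reusing the ``section'' bookkeeping from the proof of Proposition~\ref{prop:sameSize}.

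**The term-level inclusion.** Take $\p{t} \in \rterm(\p{m},\p{f})$, so $\p{t} = \tilde{\p{t}}\,\overline{\p{t}}$ with $\tilde{\p{t}} \in \term(\p{f})$, $\overline{\p{t}} \in \term(\overline{\p{f}})$. Here $\overline{\p{f}}$ is the complement of $\p{f}$ in $\p{m}$, whose variables lie in $R(\overline{\p{f}}) \times C(\overline{\p{f}})$, disjoint in both coordinates from $R(\p{f}) \times C(\p{f})$. Write $\tilde{\p{t}} = \tilde{\p{t}}_0\,\tilde{\p{t}}_1$, where $\tilde{\p{t}}_0$ collects the variables of $\tilde{\p{t}}$ in rows $R_0$ and $\tilde{\p{t}}_1$ the rest; since $\tilde{\p{t}}_0$ is a term of the Laplace factor supported on $R_0 \times C(\p{f})$, there is a unique $\p{h} \in \pset{M}$ with $\tilde{\p{t}}_0 \in \term(\p{h})$, and then $\tilde{\p{t}}_1 \in \term(\overline{\p{h}})$ where $\overline{\p{h}}$ is the complement of $\p{h}$ in $\p{f}$. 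Hence $\tilde{\p{t}} = \tilde{\p{t}}_0\,\tilde{\p{t}}_1 \in \rterm(\p{f},\p{h})$, and multiplying by $\overline{\p{t}} \in \term(\overline{\p{f}})$ (which lives entirely outside $R(\p{f}) \times C(\p{f})$, hence outside both $\p{h}$ and $\overline{\p{h}}$ and their complements in $\p{m}$) gives $\p{t} = \tilde{\p{t}}_0 \cdot (\tilde{\p{t}}_1\,\overline{\p{t}}) \in \rterm(\p{m},\p{h})$, because $\tilde{\p{t}}_1\,\overline{\p{t}}$ is a term of the complement of $\p{h}$ in $\p{m}$ (its row set is $(R(\p{f}) \setminus R_0) \cup R(\overline{\p{f}}) = R(\p{m}) \setminus R_0$, and likewise for columns). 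Thus $\p{t} \in \rterm(\p{m},\pset{M})$.

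**Main obstacle and remarks.** The one point needing care is the column case of Definition~\ref{def:attend} versus the row case I sketched: when $\p{f}$ attends via columns, one takes $\pset{M}$ with common column set $C_0 \subseteq C(\p{f})$ of size $d$ and expands along those columns instead; the argument is verbatim with rows and columns swapped. A second subtlety is that Definition~\ref{def:attend} only guarantees $\p{f}$ has \emph{at least} $d$ rows in the block, not exactly $d$; but choosing any $d$ of them suffices for the Laplace expansion and the term inclusion, so this causes no trouble. The genuinely delicate bookkeeping — disjointness of the row/column supports of $\tilde{\p{t}}_0$, $\tilde{\p{t}}_1$, $\overline{\p{t}}$, and that $\tilde{\p{t}}_1\,\overline{\p{t}}$ really is a term of the complement of $\p{h}$ in $\p{m}$ — is exactly the kind of partition-of-indices argument already carried out in Proposition~\ref{prop:reduction} and Proposition~\ref{prop:sameSize}, so I expect it to go through without new ideas. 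The hardest part is simply keeping the three-way splitting of each term straight; there is no serious analytic or combinatorial obstacle, only notational discipline.
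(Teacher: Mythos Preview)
Your proposal is correct and follows essentially the same approach as the paper's proof: both use the block $X_{\tilde p\tilde q}$ that $\p{f}$ attends to fix a row set $R_0$ of size $d$ (the paper writes $R(\tilde{\p{g}})$), define $\pset{M}$ as all $d$-minors with that row set and columns drawn from $C(\p{f})$, invoke the generalized Laplace expansion along $R_0$ for $\p{f}\in\bases{\pset{M}}$, and then split each term $\p{t}=\tilde{\p{t}}\,\overline{\p{t}}\in\rterm(\p{m},\p{f})$ into three pieces (rows in $R_0$, remaining rows of $\p{f}$, rows of $\overline{\p{f}}$) to land it in some $\rterm(\p{m},\p{h})$. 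Your explicit remark that ``at least $d$ rows'' allows one to simply pick any $d$ of them is a helpful clarification the paper leaves implicit.
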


\begin{proof}
  Since $\p{f}$ is not an elusive minor of $I_w$, we know that there exists $(p, q)\in \ess(w)$ such that $\p{f}$ attends the submatrix $X_{pq}$, which means that the intersection of $\p{f}$ and $X_{pq}$ contains more than $\rank(w_{pq}^T)+1$ full rows or columns of $\p{f}$. Then there exists a Fulton generator $\tilde{\p{g}}$ from $X_{pq}$ contained in this intersection. If the intersection contains full rows, let
  $$\pset{M} := \{\mbox{Fulton generator } \p{g} \mbox{ from } X_{pq}~|~R(\p{g}) =  R(\tilde{\p{g}}), \p{g} \mbox{ contained in } \p{f}\};$$
  otherwise let $\pset{M}$ be the set of those $\p{g}$ with $C(\p{g}) = C(\tilde{\p{g}})$ instead. In either case, the generalized Laplace expansion of $\p{f}$ implies $\p{f} = \sum_{\p{g} \in \pset{M}} (-1)^{\delta(\p{g})} \p{g}\overline{\p{g}}_{\p{f}}$, where $\overline{\p{g}}_{\p{f}}$ is the complement of $\p{g}$ in $\p{f}$, and thus $\p{f} \in \bases{\pset{M}}$. 

Now consider an arbitrary term $\p{t} \in \rterm(\p{m}, \p{f})$. Write $\p{t} = \tilde{\p{t}} \cdot \overline{\p{t}}$ with $\tilde{\p{t}} \in \term(\p{f})$ and $\overline{\p{t}} \in \term(\overline{\p{f}})$. With $\p{f}$ contained in $\p{m}$, $\p{g}$ is also contained in $\p{m}$ for each $\p{g} \in \pset{M}$. Note that in $\p{m}$, $\overline{\p{f}}$ is contained in the complement $\overline{\p{g}}_{\p{m}}$ of $\p{g}$ and the complement of $\overline{\p{f}}$ in $\overline{\p{g}}_{\p{m}}$ is $\overline{\p{g}}_{\p{f}}$. From the expansion above we also know that $\term(\p{f}) \subseteq \bigcup_{\p{g} \in \pset{M}}\term(\p{g}\overline{\p{g}}_{\p{f}})$. That is, for $\tilde{\p{t}}$ there exists a minor $\p{g} \in \pset{M}$ and $\tilde{\p{t}}_{\p{g}} \in \term(\p{g})$ and $\tilde{\p{t}}_{\overline{\p{g}}} \in \term(\overline{\p{g}})$ such that $\tilde{\p{t}} = \tilde{\p{t}}_{\p{g}} \tilde{\p{t}}_{\overline{\p{g}}}$. Fix this $\p{g}$ for $\tilde{\p{t}}$. Then for $\overline{\p{t}}$ we know that $\overline{\p{t}} \cdot \tilde{\p{t}}_{\overline{\p{g}}} \in \term(\overline{\p{g}}_{\p{m}})$, and thus $\p{t} = \tilde{\p{t}} \cdot \overline{\p{t}} = \tilde{\p{t}}_{\p{g}} \tilde{\p{t}}_{\overline{\p{g}}} \cdot \overline{\p{t}} \in \term(\p{g}\overline{\p{g}}_{\p{m}}) \subseteq \rterm(\p{m}, \pset{M})$. By the arbitrariness of $\p{t}$, we have $\rterm(\p{m}, \p{f}) \subseteq \rterm(\p{m}, \pset{M})$. 
\end{proof}

In particular, if we fix an (anti-)diagonal term order and let $\tilde{\p{t}} = \tilde{\p{t}}_{\p{g}} \cdot \tilde{\p{t}}_{\overline{\p{g}}}$ be the leading term of $\p{f}$ in the proof above, then $\tilde{\p{t}}_{\p{g}}$ is also the leading term of $\p{g}$ and thus $\lt(\p{f}) \in \lt(\pset{M})$. 

\begin{proposition}\label{prop:sameRemovedSet}
  For any elusive minor $\p{m}$ of $I_w$ for some $w \in S_n$, let $\pset{E}_{\p{m}}$ and $\pset{F}_{\p{m}}$ be the respective sets of elusive minors and Fulton generators strictly contained in $\p{m}$. Then $\rterm(\p{m}, \pset{M}_{\p{m}}) = \rterm(\p{m}, \pset{F}_{\p{m}})$ and $\lt(\pset{E}_{\p{m}}) = \lt(\pset{F}_{\p{m}})$ for any (anti-)diagonal term order. 
\end{proposition}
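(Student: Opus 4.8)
The plan is to reduce both asserted identities to a single per-generator statement and then run an induction on degree, with Lemma~\ref{lem:Fulton-elusive} and the remark following it carrying essentially all of the content; the proposition itself is then bookkeeping.

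First I would record the trivial direction. Since every elusive minor is by definition a Fulton generator, $\pset{E}_{\p{m}} \subseteq \pset{F}_{\p{m}}$, and hence at once $\rterm(\p{m}, \pset{E}_{\p{m}}) \subseteq \rterm(\p{m}, \pset{F}_{\p{m}})$ and $\bases{\lt(\pset{E}_{\p{m}})} \subseteq \bases{\lt(\pset{F}_{\p{m}})}$. (Here and below I read the asserted equality $\lt(\pset{E}_{\p{m}}) = \lt(\pset{F}_{\p{m}})$ as an identity of the monomial ideals generated by these leading terms, equivalently: a term is divisible by $\lt(\p{e})$ for some $\p{e} \in \pset{E}_{\p{m}}$ if and only if it is divisible by $\lt(\p{f})$ for some $\p{f} \in \pset{F}_{\p{m}}$. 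This is the statement actually needed later, since a non-elusive Fulton generator's leading term is a \emph{proper} multiple of an elusive minor's leading term and so need not literally occur among $\lt(\pset{E}_{\p{m}})$.) Thus it remains to prove: for every $\p{f} \in \pset{F}_{\p{m}}$, one has $\rterm(\p{m}, \p{f}) \subseteq \rterm(\p{m}, \pset{E}_{\p{m}})$ and $\lt(\p{f}) \in \bases{\lt(\pset{E}_{\p{m}})}$.

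I would prove this by strong induction on $\deg(\p{f})$, which is well-founded since the degrees are positive integers bounded above by $\deg(\p{m})$. If $\p{f}$ is elusive, then $\p{f} \in \pset{E}_{\p{m}}$ and both claims are immediate. If $\p{f}$ is not elusive, then, as $\p{m}$ is elusive and $\p{f}$ is (strictly) contained in $\p{m}$ so that $\p{m}$ is reducible modulo $\p{f}$ (Proposition~\ref{prop:reducible}), Lemma~\ref{lem:Fulton-elusive} yields a set $\pset{M}$ of Fulton generators contained in $\p{f}$, all of one common degree strictly smaller than $\deg(\p{f})$, with $\p{f}\in\bases{\pset{M}}$ and $\rterm(\p{m}, \p{f}) \subseteq \rterm(\p{m}, \pset{M})$; moreover, by the remark after that lemma, for the fixed (anti-)diagonal term order $\lt(\p{f})$ is divisible by $\lt(\p{g})$ for some $\p{g} \in \pset{M}$. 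The key point is that every $\p{g} \in \pset{M}$ again lies in $\pset{F}_{\p{m}}$: it is a Fulton generator contained in $\p{f}$, hence in $\p{m}$, and $\deg(\p{g}) < \deg(\p{f}) \le \deg(\p{m})$, so it is strictly contained in $\p{m}$. Hence the induction hypothesis applies to each $\p{g} \in \pset{M}$, giving $\rterm(\p{m}, \p{g}) \subseteq \rterm(\p{m}, \pset{E}_{\p{m}})$ and $\lt(\p{g}) \in \bases{\lt(\pset{E}_{\p{m}})}$. Taking the union over $\pset{M}$, together with $\rterm(\p{m}, \pset{M}) = \bigcup_{\p{g} \in \pset{M}} \rterm(\p{m}, \p{g})$, gives $\rterm(\p{m}, \p{f}) \subseteq \rterm(\p{m}, \pset{E}_{\p{m}})$, and transitivity of divisibility gives $\lt(\p{f}) \in \bases{\lt(\pset{E}_{\p{m}})}$. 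This closes the induction, and the two reverse inclusions follow by ranging over all $\p{f} \in \pset{F}_{\p{m}}$.

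I do not expect a genuine obstacle here, since the hard combinatorics is encapsulated in Lemma~\ref{lem:Fulton-elusive}. The one step that needs care is the termination/well-foundedness of the recursion: one must confirm that the replacement sets $\pset{M}$ produced by Lemma~\ref{lem:Fulton-elusive} always consist of Fulton generators that remain strictly contained in $\p{m}$ and of strictly smaller degree, so that the induction hypothesis is legitimately invoked at each step. A secondary point worth stating in the write-up is precisely the interpretation of $\lt(\pset{E}_{\p{m}}) = \lt(\pset{F}_{\p{m}})$ as an equality of generated monomial ideals (equivalently, the equivalence of reducibility modulo $\pset{F}_{\p{m}}$ and modulo $\pset{E}_{\p{m}}$), which is what is used in the subsequent proof of Theorem~\ref{thm:reductionM}.
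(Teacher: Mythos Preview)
Your proof is correct and follows essentially the same approach as the paper: both arguments invoke Lemma~\ref{lem:Fulton-elusive} recursively on non-elusive Fulton generators, using the strict degree drop to guarantee termination, and then collect the resulting inclusions. Your formulation as a strong induction on $\deg(\p{f})$ is a clean packaging of what the paper phrases as an iterative process, and your explicit reading of $\lt(\pset{E}_{\p{m}}) = \lt(\pset{F}_{\p{m}})$ as an equality of generated monomial ideals is a useful clarification (the paper's literal set equality cannot hold, and the remark after Lemma~\ref{lem:Fulton-elusive} already implicitly uses the divisibility interpretation).
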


\begin{proof}
  On one hand, with $\pset{E}_{\p{m}} \subseteq \pset{F}_{\p{m}}$, clearly we have $\rterm(\p{m}, \pset{E}_{\p{m}}) \subseteq \rterm(\p{m}, \pset{F}_{\p{m}})$ and $\lt(\pset{E}_{\p{m}}) \subseteq \lt(\pset{F}_{\p{m}})$. On the other hand, for each $\p{f} \in \pset{F}_{\p{m}} \setminus \pset{E}_{\p{m}}$, by using Lemma~\ref{lem:Fulton-elusive} we have a set $\pset{M}_{\p{f}}$ of Fulton generators such that $\rterm(\p{m}, \p{f}) \subseteq \rterm(\p{m}, \pset{M}_{\p{f}})$ and $\lt(\p{f}) \in \lt(\pset{M}_{\p{f}})$. Write $\pset{M}_{\p{f}} = \pset{M}_{\p{f}}^e \cup \pset{M}_{\p{f}}^n$, where $\pset{M}_{\p{f}}^e$ contains all the elusive minors in $\pset{M}_{\p{f}}$ and $\pset{M}_{\p{f}}^n = \overline{\pset{M}_{\p{f}}^e}$. Then for each $\p{g} \in \pset{M}_{\p{f}}^n$, apply Lemma~\ref{lem:Fulton-elusive} again to have $\pset{M}_{\p{g}} = \pset{M}_{\p{g}}^e \cup \pset{M}_{\p{g}}^n$ and $\lt(\p{g}) \in \lt(\pset{M}_{\p{g}})$.

  Repeat this process until there is no non-elusive minor in the set after application of Lemma~\ref{lem:Fulton-elusive}: this always happens, for the degrees of the non-elusive minors decrease strictly in the process and all the 1-minors are elusive ones. Let $\pset{E}_{\p{f}}$ be the set of all the elusive minors appearing in the process, we have $\rterm(\p{m}, \p{f}) \subseteq \rterm(\p{m}, \pset{E}_{\p{f}}) \subseteq \rterm(\p{m}, \pset{E}_{\p{m}})$ and $\lt(\p{f}) \in \lt(\pset{E}_{\p{f}}) \subseteq \lt(\pset{E}_{\p{m}})$. The arbitrariness of $\p{f}$ implies $\rterm(\p{m}, \pset{F}_{\p{m}} \setminus \pset{E}_{\p{m}}) \subseteq \rterm(\p{m}, \pset{E}_{\p{m}})$ and $\lt(\pset{F}_{\p{m}} \setminus \pset{E}_{\p{m}}) \subseteq \lt(\pset{E}_{\p{m}})$: this ends the proof. 
\end{proof}

Now we are ready to prove the main theorem of this section. 

\begin{proof}[Proof of Theorem~\ref{thm:reductionM}]
 Let $\pset{F}_{\p{m}}$ be the set of Fulton generators contained in $\p{m}$. To prove equation~\eqref{eq:reductionM}, with $\rterm(\p{m},\pset{F}_{\p{m}}) = \rterm(\p{m},\pset{E}_{\p{m}})$ and $\lt(\pset{F}_{\p{m}}) = \lt(\pset{E}_{\p{m}})$ proved in Proposition~\ref{prop:sameRemovedSet}, it suffices to prove
 $$\p{m} \xrightarrow[*]{\pset{F}_{\p{m}}} \sum_{\p{t} \in \term(\p{m})\setminus \rterm(\p{m},\pset{F}_{\p{m}})}c_{\p{t}}\p{t}$$
 instead. Let $\pset{F}_{\p{m}} = \{\p{f}_1, \ldots, \p{f}_s\}$ such that $\p{f}_1 >_r \cdots >_r \p{f}_s$ and denote by $\pset{F}_i = \{\p{f}_1, \ldots, \p{f}_i\}$ for $i \in [s]$. Consider the following sequence of reduction of $\p{m}$ modulo the minors in $\pset{F}_{\p{m}}$: 
 $$   \p{m}\xrightarrow[*]{\p{f}_1} \p{m}_1  \xrightarrow[*]{\p{f}_2} \cdots \xrightarrow[*]{\p{f}_s} \p{m}_s.$$
Next we prove that $\p{m}_i = \sum_{\p{t} \in \term(\p{m})\setminus \rterm(\p{m},\pset{F}_{i})}c_{\p{t}}\p{t}$ and $\p{m}\xrightarrow[*]{\pset{F}_i} \p{m}_i$ (meaning $\p{m}_i$ is reduced modulo $\pset{F}_i$) for each $i \in [s]$ by induction on $i$. With $\pset{F}_s = \pset{F}_{\p{m}}$, this will finish the proof.

 Clearly these two statements hold for $i=1$ by Proposition~\ref{prop:reduction}, and assume that they hold for $i=k~(1\leq k < s)$. Next we prove the case when $i=k+1$. Consider the removed terms $\rterm(\p{m}, \p{f}_{k+1}) = \bigcup_{\overline{\p{t}} \in \term(\overline{\p{f}_{k+1}})} \{\tilde{\p{t}}\cdot\overline{\p{t}}~|~\tilde{\p{t}} \in \term(\p{f}_{k+1})\}$. For each $\overline{\p{t}} \in \term(\overline{\p{f}_{k+1}})$, we study the relationships between the $\overline{\p{t}}$-section $\p{f}_{k+1, \overline{\p{t}}}$ of $\p{f}_{k+1}$ in $\p{m}$ and $\rterm(\p{m}, \pset{F}_k)$. Since $\p{f}_1 >_r \cdots >_r \p{f}_k >_r \p{f}_{k+1}$, we know that $\deg(\p{f}_{k+1}) \leq \deg(\p{f}_i)$ for each $i=1, \ldots, k$. If $\p{f}_{k+1, \overline{\p{t}}} \cap \rterm(\p{m}, \pset{F}_k) \neq \emptyset$, then there exists $\p{g} \in \pset{F}_k$ such that $\p{f}_{k+1, \overline{\p{t}}} \cap \rterm(\p{m}, \p{g}) \neq \emptyset$. No matter $\deg(\p{g}) = \deg(\p{f}_{k+1})$ (by using Proposition~\ref{prop:sameSize}) or $\deg(\p{g}) > \deg(\p{f}_{k+1})$ (by Proposition~\ref{prop:diffSize}), we know that there exists a subset $\pset{M} \subseteq \pset{F}_k$ such that $\p{f}_{k+1, \overline{\p{t}}} \subseteq \rterm(\p{m}, \pset{M}) \subseteq \rterm(\p{m}, \pset{F}_k)$. From these arguments we know that $\rterm(\p{m}, \pset{F}_{k+1})$ equals the union of $\rterm(\p{m}, \pset{F}_k)$ and all $\p{f}_{k+1, \overline{\p{t}}}$ such that $\p{f}_{k+1, \overline{\p{t}}} \cap \rterm(\p{m}, \pset{F}_k) = \emptyset$. 

 Let $\p{t} = \lt(\p{f}_{k+1})$. In the case when $\p{f}_{k+1, \overline{\p{t}}} \cap \rterm(\p{m}, \pset{F}_k) = \emptyset$, by the induction assumption we know that $\p{m}_k = \sum_{\p{t} \in \term(\p{m})\setminus \rterm(\p{m},\pset{F}_{k})}c_{\p{t}}\p{t}$, and thus $\p{f}_{k+1, \overline{\p{t}}} \subseteq \term(\p{m}_k)$. Then we have $\p{m}_k \xrightarrow[\p{t}\overline{\p{t}}]{\p{f}_{k+1}} \p{m}_{k, \overline{\p{t}}}$, where $\p{m}_{k, \overline{\p{t}}}$ is obtained by removing the terms in $\p{f}_{k+1, \overline{\p{t}}}$ from $\p{m}_k$. In the case when $\p{f}_{k+1, \overline{\p{t}}} \cap \rterm(\p{m}, \pset{F}_k) \neq \emptyset$, with $\p{f}_{k+1, \overline{\p{t}}} \subseteq \rterm(\p{m}, \pset{F}_k)$ and $\p{m}_k = \sum_{\p{t} \in \term(\p{m})\setminus \rterm(\p{m},\pset{F}_{k})}c_{\p{t}}\p{t}$, we know that for any term $\tilde{\p{t}}\overline{\p{t}} \in \p{f}_{k+1, \overline{\p{t}}}$,  $\tilde{\p{t}}\overline{\p{t}} \not \in \p{m}_k$. In particular, $\p{t}\overline{\p{t}} \not \in \p{m}_k$, and thus $\p{m}_k \xrightarrow[\p{t}\overline{\p{t}}]{\p{f}_{k+1}} \p{m}_{k}$.

 Clearly different sections of $\p{f}_{k+1}$ in $\p{m}$ are disjoint. This means that reduction of $\p{m}_k$ modulo $\p{f}_{k+1}$ amounts to removing the terms in $\p{f}_{k+1, \overline{\p{t}}}$ for those $\overline{\p{t}}$ such that $\p{f}_{k+1, \overline{\p{t}}} \cap \rterm(\p{m}, \pset{F}_k) = \emptyset$, namely $\p{m}_k \xrightarrow[*]{\p{f}_{k+1}} \p{m}_{k+1} = \sum_{\p{t} \in \term(\p{m})\setminus \rterm(\p{m},\pset{F}_{k+1})}c_{\p{t}}\p{t}$. Note that in the process of term removal, terms not in $\term(\p{m}_k)$ will not reappear. Then by the induction assumption that $\p{m}  \xrightarrow[*]{\pset{F}_k} \p{m}_k$, we know that no term in $\p{m}_k$ is divisible by $\lt(\p{f}_i)$ for $i=1, \ldots, k$, and thus no term in $\p{m}_{k+1}$ is divisible by $\lt(\p{f}_i)$ for $i=1, \ldots, k+1$, namely $\p{m}  \xrightarrow[*]{\pset{F}_{k+1}} \p{m}_{k+1}$. 
\end{proof}

Clearly applying Theorem~\ref{thm:reductionM} to all elusive minors of a Schubert determinantal ideal for a non-vexillary permutation will furnish the reduced \grobner basis w.r.t. an anti-diagonal term order, and we formulate this result in the theorem below. Note that formula~\eqref{eq:reductionM} allows us to directly write down the result of reduction of an elusive minor modulo the set of elusive minors strictly contained in it without actually performing reduction between the polynomials expanded from the minors. 

\begin{theorem}\label{thm:reducedGB}
    Let $w \in S_n$ be a non-vexillary permutation, $\pset{E}$ be the set of all the elusive minors of $I_w$,  and $<$ be an anti-diagonal term order. For each $\p{m} \in \pset{E}$, write $\p{m} := \sum_{\p{t} \in \term(\p{m})} c_{\p{t}}\p{t}$ with $c_{\p{t}} = \pm 1$ and let $\tilde{\p{m}} = \sum_{\p{t} \in \term(\p{m})\setminus \rterm(\p{m},\pset{E}_{\p{m}})}c_{\p{t}}\p{t}$, where 
    $\pset{E}_{\p{m}}$ is the set of elusive minors strictly contained in $\p{m}$. Then $\bigcup_{\p{m} \in \pset{E}} \tilde{\p{m}}$ forms the reduced \grobner basis of $I_w$ w.r.t. $<$.
\end{theorem}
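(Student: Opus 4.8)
The plan is to read this off from Theorem~\ref{thm:reductionM} via the standard procedure that turns a minimal \grobner basis into the reduced one. By Theorem~\ref{thm:minimal}~(1) the set $\pset{E}$ of all elusive minors of $I_w$ is already a minimal \grobner basis of $I_w$ w.r.t.\ the anti-diagonal term order $<$, and $\lc(\p{m}) = \pm 1$ for each $\p{m} \in \pset{E}$ since $\p{m}$ is a minor of $X$. As recalled right after Definition~\ref{def:reducedGB} (see \cite[Chapter~2.7]{CLO1997I}), the reduced \grobner basis is produced by replacing each $\p{m} \in \pset{E}$ with a polynomial $\overline{\p{m}}$ satisfying $\p{m} \xrightarrow[*]{\pset{E} \setminus \{\p{m}\}} \overline{\p{m}}$. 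Hence the whole task reduces to showing that $\tilde{\p{m}}$ is an admissible choice for $\overline{\p{m}}$, i.e.\ $\p{m} \xrightarrow[*]{\pset{E} \setminus \{\p{m}\}} \tilde{\p{m}}$; this is exactly where the non-vexillary hypothesis is used, since it makes Theorem~\ref{thm:reductionM} available (for vexillary $w$ the conclusion is already Theorem~\ref{thm:schubert-vex-reduced}).

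\textbf{Key steps.} I would start from the concrete reduction $\p{m} \xrightarrow[*]{\pset{E}_{\p{m}}} \tilde{\p{m}}$ furnished by Theorem~\ref{thm:reductionM}. Each of its steps uses a minor of $\pset{E}_{\p{m}} \subseteq \pset{E} \setminus \{\p{m}\}$, so this is simultaneously a reduction of $\p{m}$ modulo the larger set $\pset{E} \setminus \{\p{m}\}$. It then remains only to verify that the endpoint $\tilde{\p{m}}$ is reduced modulo \emph{all} of $\pset{E} \setminus \{\p{m}\}$, not merely modulo $\pset{E}_{\p{m}}$: it is reduced modulo $\pset{E}_{\p{m}}$ by Theorem~\ref{thm:reductionM}, and for an elusive minor $\p{m}' \in \pset{E} \setminus \{\p{m}\}$ with $\p{m}' \notin \pset{E}_{\p{m}}$, the minor $\p{m}'$ is not contained in $\p{m}$ --- a minor contained in $\p{m}$ but distinct from it necessarily has both its row and column index sets strictly shrunk (the minors being square), hence lies in $\pset{E}_{\p{m}}$. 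So by Proposition~\ref{prop:reducible} no term of $\p{m}$ is divisible by $\lt(\p{m}')$, and since $\term(\tilde{\p{m}}) \subseteq \term(\p{m})$ is immediate from the defining formula for $\tilde{\p{m}}$, no term of $\tilde{\p{m}}$ is divisible by $\lt(\p{m}')$ either. Thus $\tilde{\p{m}}$ is reduced modulo $\pset{E} \setminus \{\p{m}\}$ and the required $\p{m} \xrightarrow[*]{\pset{E} \setminus \{\p{m}\}} \tilde{\p{m}}$ holds.

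\textbf{Conclusion and main difficulty.} Choosing $\overline{\p{m}} = \tilde{\p{m}}$ for every $\p{m} \in \pset{E}$ in the construction above then yields the reduced \grobner basis, so $\bigcup_{\p{m} \in \pset{E}} \tilde{\p{m}}$ is the reduced \grobner basis of $I_w$ w.r.t.\ $<$; I would also record that $\lt(\tilde{\p{m}}) = \lt(\p{m})$ (by minimality of $\pset{E}$ the leading term of $\p{m}$ is never a removed term, so it survives the reduction), whence distinct $\p{m}$ give distinct $\tilde{\p{m}}$ and $\bigcup_{\p{m} \in \pset{E}} \tilde{\p{m}}$ genuinely consists of $\#\pset{E}$ polynomials. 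I do not expect any real obstacle here: Theorem~\ref{thm:reductionM} carries the entire analytic weight, and the only point deserving a line of care is the ``reduction of scope'' in the middle step --- showing that the minors in $\pset{E} \setminus (\pset{E}_{\p{m}} \cup \{\p{m}\})$ are irrelevant for reducing $\p{m}$ --- which rests on Proposition~\ref{prop:reducible} together with $\term(\tilde{\p{m}}) \subseteq \term(\p{m})$.
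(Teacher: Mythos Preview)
Your proposal is correct and follows exactly the approach the paper takes: the paper simply states that ``applying Theorem~\ref{thm:reductionM} to all elusive minors \ldots will furnish the reduced \grobner basis'' and records Theorem~\ref{thm:reducedGB} as an immediate corollary. Your write-up in fact makes explicit the one point the paper leaves to the reader --- that an elusive minor $\p{m}' \in \pset{E}\setminus(\pset{E}_{\p{m}}\cup\{\p{m}\})$ cannot reduce any term of $\p{m}$ (via Proposition~\ref{prop:reducible} and $\term(\tilde{\p{m}})\subseteq\term(\p{m})$) --- so reducing modulo $\pset{E}_{\p{m}}$ already achieves full reduction modulo $\pset{E}\setminus\{\p{m}\}$.
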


From the view point of computation, one can now represent all the minors with their row and column indices in the whole process to compute the reduced \grobner basis of a Schubert determinantal ideal: construction of Fulton generators from the essential set (\grobner bases), identification of elusive minors using the definition (minimal \grobner bases), and reduction among elusive minors using Theorem~\ref{thm:reductionM} (reduced \grobner bases). The former two steps are obviously combinatorial, and with Theorem~\ref{thm:reductionM} the originally algebraic last step also becomes combinatorial. Representation of minors with row and column indices means that the corresponding algorithm for computing the reduced \grobner bases of Schubert determinantal ideals, which can be easily formulated from the above-mentioned process, only needs to handle integers and thus are expected to be highly efficient.

In fact, when $n$ is large, the process of expanding all elusive minors of $w\in S_n$ and performing reduction among them can be costly. Take a non-vexillary $w = [1,9,4,2,7,6,3,5,10,8] \in S_{10}$ for example, there are 91 elusive minors in $I_w$ and reduction among them after expansion took more than two hours by using the default procedures in the Computer Algebra System {\sc Maple} on a personal laptop computer. We have implemented an algorithm named \texttt{RedGBSchubert} for computing reduced \grobner bases of Schubert determinantal ideals based on Theorem~\ref{thm:reducedGB}, included in the software package DetGB\footnote{Available at www.cmou.net/DetGB.html} we developed. Our implementation only takes approximately 720 seconds to compute the reduced \grobner basis of $I_w$. For further experimental comparisons, the readers are referred to \cite{MSZ24D}.

\section{W-characteristic sets of Schubert determinantal ideals}
\label{sec:w-char}

As shown in Section~\ref{sec:ts}, W-characteristic sets serve as a bridge to study multivariate polynomial ideals in terms of both \grobner bases and triangular sets, two fundamental tools in symbolic computation \cite{W2016o,WDM20d}. In this section we study properties of W-characteristic sets of Schubert determinantal ideals w.r.t. (anti-)diagonal lexicographic orders. We first prove the following equivalent conditions for a lexicographic term order to be (anti)-diagonal.

\begin{proposition}\label{prop:anti-var}
  Let $<_v$ be a variable order in $\kx$. Then the lexicographic term order induced by $<_v$ is anti-diagonal (resp. diagonal) if and only if the greatest variable in any square submatrix of $X$ is at its north-east or south-west corner (resp. north-west or south-east corner). 
\end{proposition}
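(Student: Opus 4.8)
The plan is to reduce the whole statement to comparisons of monomials under $<_{\rm lex}$ together with one elementary fact about $2\times2$ blocks. I will only treat the anti-diagonal case, the diagonal one being symmetric (interchange the two diagonals everywhere). First I would set up notation: for a $k$-minor $\p{m}$ of $X$ on rows $i_1<\cdots<i_k$ and columns $j_1<\cdots<j_k$, expansion gives $\p{m}=\sum_{\sigma\in S_k}\varepsilon_\sigma\,\p{t}_\sigma$ with $\varepsilon_\sigma=\pm1$ and $\p{t}_\sigma=\prod_{l=1}^{k}x_{i_l j_{\sigma(l)}}$, and distinct permutations yield distinct squarefree monomials, so $\term(\p{m})=\{\p{t}_\sigma:\sigma\in S_k\}$; the anti-diagonal term is $\p{t}_{w_0}$, where $w_0$ is the order-reversing permutation of $[k]$. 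By the lexicographic rule, $\p{t}_\sigma>_{\rm lex}\p{t}_{\sigma'}$ iff the $<_v$-largest variable occurring in exactly one of the two monomials occurs in $\p{t}_\sigma$. When $\sigma'$ is obtained from $\sigma$ by exchanging the two values at positions $a<b$, the variables occurring in exactly one of $\p{t}_\sigma,\p{t}_{\sigma'}$ are precisely the four entries of the $2\times2$ submatrix on rows $i_a,i_b$ and columns $j_{\sigma(a)},j_{\sigma(b)}$; if in addition $\sigma(a)<\sigma(b)$, then $\p{t}_\sigma$ carries the north-west and south-east entries while $\p{t}_{\sigma'}$ carries the north-east and south-west ones, so $\p{t}_{\sigma'}>_{\rm lex}\p{t}_\sigma$ exactly when the $<_v$-greatest of these four entries is at the north-east or south-west corner. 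Taking $k=2$ already gives: the induced $<_{\rm lex}$ makes the $2\times2$ minor on a given $2\times2$ submatrix anti-diagonal iff the $<_v$-greatest of its four entries is at the north-east or south-west corner.

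For the ``only if'' direction: if the induced $<_{\rm lex}$ is anti-diagonal, the $k=2$ remark gives the corner property for every $2\times2$ submatrix, and I then upgrade it to an arbitrary square submatrix $M$ of size $k\ge2$. Let $x_{pq}$ be the $<_v$-greatest entry of $M$, sitting at the $s$-th row and $t$-th column of $M$. If $s\le k-1$ and $t\le k-1$, then $(p,q)$ is the north-west corner of a $2\times2$ submatrix of $M$; otherwise $s=k$ or $t=k$, and if $(p,q)$ is neither the south-west nor the north-east corner of $M$ then $s\ge2$ and $t\ge2$, so $(p,q)$ is the south-east corner of a $2\times2$ submatrix of $M$. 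In either of these cases $x_{pq}$ is the $<_v$-greatest entry of a $2\times2$ submatrix yet sits at its north-west or south-east corner, contradicting the $2\times2$ corner property; hence $x_{pq}$ is at the north-east or south-west corner of $M$. For the ``if'' direction (for which only the $2\times2$ instances of the hypothesis are needed), fix a $k$-minor $\p{m}$ and let $\sigma$ be the permutation maximizing $\p{t}_\sigma$ under $<_{\rm lex}$ (it exists, the $\p{t}_\sigma$ being finitely many and pairwise distinct). If $\sigma\ne w_0$, then, $w_0$ being the only permutation with strictly decreasing value sequence, there is an index $a$ with $\sigma(a)<\sigma(a+1)$; exchanging those two values produces $\sigma'$ with $\p{t}_{\sigma'}>_{\rm lex}\p{t}_\sigma$ by the set-up above, since the relevant $2\times2$ submatrix has its greatest entry at a north-east or south-west corner by hypothesis — contradicting maximality. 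Hence $\sigma=w_0$, so $\lt(\p{m})=\p{t}_{w_0}$ is the anti-diagonal term for every minor, i.e. $<_{\rm lex}$ is anti-diagonal.

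I expect the proof to be essentially routine; the one point needing genuine care is the case check in the ``only if'' direction just sketched — that a $<_v$-maximal entry of a $k\times k$ block which is at neither the north-east nor the south-west corner must be the north-west or south-east corner of some $2\times2$ sub-block — together with the simple but easy-to-botch bookkeeping that at each exchange step the two monomials being compared differ exactly in the entry set of one $2\times2$ submatrix, with the ``diagonal'' pair on one side and the ``anti-diagonal'' pair on the other. Neither is a real obstacle, just a finite verification.
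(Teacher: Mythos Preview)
Your proof is correct. The ``only if'' direction is essentially the same as the paper's: both locate a $2\times 2$ submatrix in which the globally greatest entry sits off the anti-diagonal, contradicting anti-diagonality for that $2$-minor; you are simply more explicit about the case split guaranteeing such a $2\times 2$ block exists.

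The ``if'' direction, however, takes a genuinely different route. The paper argues by recursive Laplace expansion: at each stage the hypothesis (applied to the current square submatrix, of whatever size) places the leading variable at the north-east or south-west corner, and expanding along the corresponding row peels off one anti-diagonal entry; iterating yields the anti-diagonal term. Your argument is instead an exchange/bubble-sort argument on the permutation indexing the maximal monomial: any ascent in $\sigma$ gives a $2\times 2$ block whose anti-diagonal pair dominates, so swapping increases the monomial, forcing $\sigma=w_0$. Your approach is a bit more elementary (no Laplace expansion needed) and, as you note, uses only the $2\times 2$ instances of the hypothesis---which, combined with the other direction, makes it transparent that the full ``any square submatrix'' condition is already equivalent to its $2\times 2$ case. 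The paper's approach, by contrast, gives a more direct algorithmic computation of $\lt(\p{m})$ and uses the hypothesis at every intermediate size. Both are short and valid.
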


\begin{proof}
We only prove the anti-diagonal case. 
  
  $(\Longleftarrow)$ Consider a square submatrix $M$ of size $r$ in $X$ and $\p{m} = \det(M)$ with $R(\p{m}) = \{i_1, \ldots, i_r\}$ and $C(\p{m}) = \{j_1, \ldots, j_r\}$ such that $i_1 < \cdots < i_r$ and $j_1 < \cdots < j_r$.  By the assumption we know that $\lv(\p{m})$ is either $x_{i_1,j_r}$ or $x_{i_r, j_1}$. Consider the case when $\lv(\p{m}) = x_{i_1,j_r}$. The Laplace expansion of $M$ w.r.t. its first row (otherwise in the case of $\lv(\p{m}) = x_{i_r,j_1}$, w.r.t. the last row) is $\p{m} = \sum_{k=1}^r c_{1k}x_{i_1,j_k}\p{m}_{1k}$, where $c_{1k} = \pm 1$ and $\p{m}_{1k}$ is the determinant of the submatrix $M_{1k}$ of $M$ by removing its $1$st row and $k$-th column. With the lexicographic term order, we have $\lt(\p{m}) = \lt(x_{i_1,j_r}\p{m}_{1r})$. Repeat the above process to $M_{1r}$ and its submatrices until the submatrix obtained by removing one row and column becomes of size 1. Then we will have $\lt(\p{m}) = \prod_{k=1}^r x_{i_k, j_{r-k+1}}$, and by the arbitrariness of $M$ the corresponding lexicographic term order is anti-diagonal. 

 $(\Longrightarrow)$ Suppose that there exists a square submatrix $M$ whose greatest variable $x_{ij}$ is not at its north-east or south-west corner. Then $x_{ij}$ and either of the north-west and south-east corners of $M$ determines a $2\times 2$ minor $\tilde{\p{m}}$ (depending on the position of $x_{ij}$ in $M$). With $\tilde{\p{m}}$ contained in $\p{m}$, $x_{ij}$ is still the greatest variable in $\tilde{\p{m}}$. Then $\lt(\tilde{\p{m}})$ contains $x_{ij}$ and it is not the product of all its anti-diagonal elements: a contradiction. 
\end{proof}

With Proposition~\ref{prop:anti-var}, we can assign the following 4 ``scanning'' variable orders to $X$ such that the lexicographic term orders induced by them are anti-diagonal: (1 and 2) the North-East corner variable $x_{1n}$ is the greatest and the next greatest variable is determined by scanning row by row to the West (or column by column to the South), and we call the corresponding anti-diagonal lexicographic term orders induced by them the NEW and NES ones; (3 and 4) similarly we have the SWE and SWN term orders which are also anti-diagonal. Four corresponding scanning variable orders to induce lexicographic diagonal term orders can be defined in the same way. 

In the following we prove the normality of the W-characteristic sets of Schubert determinantal ideals for vexillary permutations w.r.t. the NEW term order, and the same result holds to for the remaining 3 lexicographic anti-diagonal term orders defined above. 

\begin{theorem}\label{thm:normal}
  Let $w$ be a vexillary permutation. Then the W-characteristic set $\pset{C}$ of $I_w$ is normal for the NEW term order. 
\end{theorem}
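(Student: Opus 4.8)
The plan is to work throughout with the combinatorial description of elusive minors and of the NEW order. By Proposition~\ref{prop:anti-var} the NEW term order is an anti-diagonal lexicographic term order, so Theorem~\ref{thm:schubert-vex-reduced} tells us that the reduced \grobner basis $\pset{G}$ of $I_w$ with respect to it is precisely the set $\pset{E}$ of elusive minors of $I_w$. Fix an elusive minor $\p{m}$ with $R(\p{m}) = \{i_1 < \cdots < i_r\}$ and $C(\p{m}) = \{j_1 < \cdots < j_r\}$. By Lemma~\ref{lem:leadingTerm} its leading term is the anti-diagonal product $x_{i_1j_r}x_{i_2j_{r-1}}\cdots x_{i_rj_1}$, and since under the NEW variable order a variable in an earlier row is larger, and within a fixed row a larger column index gives a larger variable, we get $\lv(\p{m}) = x_{i_1j_r}$, the entry at the north-east corner of $\p{m}$. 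Expanding $\det\p{m}$ along its top row $i_1$, the coefficient of $x_{i_1j_r}$ equals $\pm$ the minor on rows $\{i_2,\ldots,i_r\}$ and columns $\{j_1,\ldots,j_{r-1}\}$; hence $\ini(\p{m})$ is this $(r-1)$-minor, and every variable $x_{ij}$ occurring in it satisfies $i_2 \le i \le i_r$ and $j_1 \le j \le j_{r-1}$, i.e.\ it sits strictly below row $i_1$ and strictly left of column $j_r$ (when $r=1$ the initial is a nonzero constant and there is nothing to check).

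Recall that $\pset{C}$ selects, for each variable occurring as the leading variable of some element of $\pset{G}$, the $<_{\mathrm{lex}}$-minimal such element. Identifying a variable $x_{ab}$ with its position $(a,b)$, the previous paragraph shows that the set of leading variables of $\pset{C}$ equals $V := \{(a,b) : (a,b)\text{ is the north-east corner of some elusive minor of }I_w\}$, and that for $(a,b)\in V$ the polynomial $C_{ab}$ chosen by $\pset{C}$ is the $<_{\mathrm{lex}}$-minimal elusive minor with north-east corner $(a,b)$. Writing $R(C_{ab}) = \{a = i_1 < i_2 < \cdots < i_r\}$ and $C(C_{ab}) = \{j_1 < \cdots < j_{r-1} < j_r = b\}$, the support of $\ini(C_{ab})$ is exactly $\{i_2,\ldots,i_r\}\times\{j_1,\ldots,j_{r-1}\}$, so $\pset{C}$ is normal if and only if the following claim holds: for every $(a,b)\in V$ and all $2\le \kappa\le r$, $1\le \ell\le r-1$, the position $(i_\kappa, j_\ell)$ does not belong to $V$.

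To establish the claim I would first make the shape of $C_{ab}$ explicit. Order $\ess(w)$ as $(p_1,q_1),\ldots,(p_N,q_N)$ with $p_1\le\cdots\le p_N$, $q_1\ge\cdots\ge q_N$ and distinct pairs (possible precisely because $w$ is vexillary), and put $r_t = \rank(w^T_{p_tq_t})+1$. Since every column of a minor of $X_{p_tq_t}$ lies in $[q_t]\subseteq[q_{t'}]$ whenever $q_{t'}\ge q_t$, Definition~\ref{def:attend} simplifies here: a size-$r_t$ minor $\p{m}$ of $X_{p_tq_t}$ is elusive exactly when $\#(R(\p{m})\cap[p_{t'}])<r_{t'}$ for each $t'<t$ with $r_{t'}<r_t$ and $\#(C(\p{m})\cap[q_{t'}])<r_{t'}$ for each $t'>t$ with $r_{t'}<r_t$. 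Under NEW, lexicographic minimality then forces the ``free'' rows $i_2<\cdots<i_r$ of $C_{ab}$ to be driven as far down, and the ``free'' columns $j_1<\cdots<j_{r-1}$ as far left, as these elusiveness constraints allow (the row positions winning ties in the lexicographic comparison), so the support of $\ini(C_{ab})$ consists of positions in the extreme bottom-left of the block $X_{p_tq_t}$ containing $C_{ab}$. On the other hand, if $(i_\kappa,j_\ell)$ were the north-east corner of an elusive minor of some size $r'$ in a block $X_{p_sq_s}$, then $X_{p_sq_s}$ would have to accommodate $r'$ rows in $[i_\kappa, p_s]$ and $r'$ columns in $[1, j_\ell]\subseteq[1,q_s]$, giving $r' \le p_s-i_\kappa+1 \le p_s - i_2 + 1$ and $r' \le j_\ell \le j_{r-1}$, in particular $r' < r$. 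I would then combine these bounds with the bottom-left extremality of the support of $\ini(C_{ab})$ and the relative position of $(p_s,q_s)$ to the essential box of $C_{ab}$ to conclude that $C_{ab}$ contains $r'$ full rows or $r'$ full columns inside $X_{p_sq_s}$, i.e.\ that $C_{ab}$ attends $X_{p_sq_s}$; as $r' < r = \deg(C_{ab})$ and $\rank(w^T_{p_sq_s}) = r'-1 < r-1$, this contradicts elusiveness of $C_{ab}$. The same argument applies verbatim to the remaining anti-diagonal lexicographic orders NES, SWE, SWN.

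The main obstacle is the last step: converting the arithmetic inequalities $r'\le p_s-i_\kappa+1$ and $r'\le j_\ell$ into an actual ``attending'' of $X_{p_sq_s}$ by $C_{ab}$, which requires careful bookkeeping of how far the elusiveness constraints let the free rows and columns of $C_{ab}$ be pushed, and in particular a clean treatment of the subcase where the offending elusive minor sits in the same essential block as $C_{ab}$. In that subcase one must genuinely use that $C_{ab}$ is $<_{\mathrm{lex}}$-minimal: already for a single block $X_{33}$ with size-$2$ minors there are elusive minors (e.g.\ $(\{1,2\},\{2,3\})$) whose initial is supported at a position ($(2,2)$) that is itself a north-east corner of an elusive minor, and it is precisely lexicographic minimality — which here instead selects $(\{1,3\},\{1,3\})$, with initial supported at $(3,1)\notin V$ — that rules this out.
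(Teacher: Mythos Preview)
Your overall architecture is the same as the paper's: identify the reduced \grobner basis with the elusive minors, read off $\lv(\p{m})$ as the north-east corner, recognise $\ini(\p{m})$ as the complementary $(r-1)$-minor, and then argue that no variable in that support can be the leading variable of another element of $\pset{C}$. The difference is in how the last step is executed. The paper does not try to derive a contradiction from abstract inequalities; instead it \emph{explicitly constructs}, for each leading variable $x_{ij}$, the $<_{\rm lex}$-minimal elusive minor with that leading variable. In the easy case $\pset{S}=\emptyset$ this is $(\{i\}\cup[p-r+1,p],\,[r]\cup\{j\})$, and in the general case the column set is given by a concrete staircase formula (their equation~(6.1)) built from the ranks $r_1,\ldots,r_t$ of the relevant essential boxes. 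With this explicit shape in hand, the normality check becomes a short direct computation: one simply counts how many columns of $C(\p{m})$ lie in each $[q_k]$ and sees it is at most $r_k$, which rules out any smaller elusive minor having its north-east corner there.

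Your sketch stops exactly where this explicit description is needed, and you say so. But there is also an unflagged gap: the assertion ``in particular $r'<r$'' does not follow from the inequalities $r'\le p_s-i_\kappa+1$ and $r'\le j_\ell\le j_{r-1}$ you wrote down. Nothing forces $j_{r-1}<r$ (the columns of $C_{ab}$ need not be the first $r$ columns), and when $(p_s,q_s)$ is the same essential box as that of $C_{ab}$ one even has $r'=r$. So the size comparison you rely on to set up the attending contradiction is not available in general, and the argument cannot be completed without pinning down the actual column set of $C_{ab}$, which is precisely what the paper's explicit construction supplies. In short, your outline is on the right track, but the ``careful bookkeeping'' you defer is the heart of the proof, and the paper's formula~(6.1) is the cleanest way to carry it out.
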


\begin{proof}
  With the NEW term order being anti-diagonal, by Theorem~\ref{thm:schubert-vex-reduced} we know that all the elusive minors form the reduced \grobner basis of $I_w$. Since $w$ is vexillary, we can assign a total order to all the pairs in $\ess(w)$: for $(p, q), (\tilde{p}, \tilde{q}) \in \ess(w)$, $(p, q) < (\tilde{p}, \tilde{q})$ if $p < \tilde{p}$ or $p = \tilde{p}$ and $q > \tilde{q}$.

  Let $\p{m}$ be an elusive minor in $\pset{C}$ of size $r+1$, $\lv(\p{m}) = x_{ij}$, and $(p, q)$ be the greatest pair in $\ess(w)$ such that $\p{m} \subseteq X_{pq}$ and $\rank(w_{pq}^T) = r$ (note that for all $(p, \tilde{q})$ with $\tilde{q}\neq q$, we have $\rank(w_{p\tilde{q}}^T) \neq \rank(w_{pq}^T)$, and thus here we are picking the greatest $p$ with the suitable $q$). Consider the set
  $$ \pset{S} = \{(\tilde{p}, \tilde{q}) \in \ess(w): (\tilde{p}, \tilde{q}) > (p, q) \mbox{ and } \rank(w_{\tilde{p}\tilde{q}}^T) < r\}$$
  with the following 2 cases.

  (1) $\pset{S} = \emptyset$. Consider the minor $\p{m}' = (\{i\} \cup [p-r+1, p], [r] \cup \{j\})$. Then clearly $\p{m}'$ is of size $r+1$ and $\lv(\p{m}') = x_{ij}$. Next we prove that $\p{m}'$ is elusive and it is the smallest w.r.t. the NEW term order among all elusive minors of size $r+1$ and the leading variable $x_{ij}$.

  (1.1) For any $(\tilde{p}, \tilde{q}) \in \ess(w)$ and $(\tilde{p}, \tilde{q}) > (p, q)$, if $\rank(w_{\tilde{p}\tilde{q}}^T) \geq r$, then $\p{m}'$ does not attend $X_{\tilde{p}\tilde{q}}$. So we only need to study those with $\rank(w_{\tilde{p}\tilde{q}}^T)<r$. But by $\pset{S} = \emptyset$, we know that $\p{m}'$ does not attend any $X_{\tilde{p}\tilde{q}}$.

  For any $(\tilde{p}, \tilde{q}) \in \ess(w)$ and $(\tilde{p}, \tilde{q}) < (p, q)$, we know that $\tilde{p}<p$ and thus $\p{m}'$ intersects $X_{\tilde{p}\tilde{q}}$ with full rows. Since $R(\p{m}') = \{i\} \cup [p-r+1, p]$, we know that $\#(R(\p{m}') \cap [\tilde{p}])$ is already the least possible. Assume that there exists $(\tilde{p}, \tilde{q}) < (p, q)$ such that $\p{m}'$ attends $X_{\tilde{p}\tilde{q}}$, namely $\#(R(\p{m}') \cap [\tilde{p}]) \geq \rank(w_{\tilde{p}\tilde{q}}^T)+1$, then $\#(R(\p{m}) \cap [\tilde{p}]) \geq \#(R(\p{m}') \cap [\tilde{p}]) \geq \rank(w_{\tilde{p}\tilde{q}}^T)+1$: this means that $\p{m}$ attend $X_{\tilde{p}\tilde{q}}$ and it is a contradiction.

(1.2) With the definition of the NEW term order, We only need to prove that $R(\p{m}')$ is the largest possible and $C(\p{m}')$ is the smallest possible, and these are obvious with $(p, q)$ the greatest.

(2) $\pset{S} \neq \emptyset$. Write $\pset{S} = \{(p_1, q_1), \ldots, (p_t, q_t) \}$ with $(p_i, q_i) > (p_{i+1}, q_{i+1})$ for $i=1, \ldots, t-1$. Denote $r_i = \rank(w_{p_iq_i}^T)$, and let $s_1 = \min(r_1, \ldots, r_t)$ and $s_i = \min(r_i, \ldots, r_t) - \min(r_{i-1}, \ldots, r_t)$ for $i = 2, \ldots, t$. Now consider the minor $\p{m}'$ such that $R(\p{m}') = \{i\} \cup [p-r+1, p]$ and
\begin{equation}
  \label{eq:w-char-case2}
C(\p{m}') = [s_1] \cup [q_1+1, q_1+s_2] \cup \cdots \cup [q_{t-1}+1, q_{t-1}+s_t] \cup [q_t+1, q_t+ r-r_t] \cup \{j\}.  
\end{equation}
Then $\#C(\p{m}') = \sum_{i=1}^ts_i + r - r_t + 1 = r+1$ and $\lv(\p{m}') = x_{ij}$. The way how the row and column indices of $\p{m}'$ are chosen is illustrated in Figure~\ref{fig:normal} for an example (with the entries of $\p{m}'$ in pink). Next we prove that $\p{m}'$ is elusive and the smallest w.r.t. the NEW term order.

(2.1) For any $(\tilde{p}, \tilde{q}) \in \ess(w)$ such that  $(\tilde{p}, \tilde{q}) > (p, q)$ and $\rank(w_{\tilde{p}\tilde{q}}^T) < r$, we know that $(\tilde{p}, \tilde{q}) \in \pset{S}$. Let $(\tilde{p}, \tilde{q}) = (p_k, q_k)$ with $1\leq k \leq t$. Then $\#(C(\p{m}') \cap [q_k]) = \sum_{i=1}^ks_k = \min(r_k, \ldots, r_t) < r_k+1$, and thus $\p{m}'$ does not attend $X_{\tilde{p}\tilde{q}}$. For any $(\tilde{p}, \tilde{q}) \in \ess(w)$ and $(\tilde{p}, \tilde{q}) < (p, q)$, the proof is exactly the same as in (1.2).

(2.2) Obviously $R(\p{m}')$ is the largest possible. Next we prove that $C(\p{m}')$ is the smallest possible. For $i=1, \ldots, t-1$, assume that there exists an integer from $[s_i+1, q_i]$ which belongs to $C(\p{m}')$, then  $\#(C(\p{m}') \cap [q_i]) = \sum_{j=1}^is_j + 1 = \min(r_i, \ldots, r_t) +1$. So there exists $k \in [i,t]$ such that $r_k = \min(r_i, \ldots, r_t)$ and $\#(C(\p{m}') \cap [q_k]) \geq \#(C(\p{m}') \cap [q_i]) = r_k +1$: this means that $\p{m}'$ attends $x_{p_kq_k}$ and it is a contradiction. 

What we prove above about $\p{m}'$ fully characterizes the smallest polynomial among the reduced \grobner basis of $I_w$ of the same leading variable w.r.t. the NEW term order, and next we use this characterization to prove that $\pset{C}$ is normal. That is, for any polynomial $\p{m} \in \pset{C}$, its initial $\ini(\p{m})$ does not involve any $\lv(\pset{C} \setminus \{\p{m}\})$. Let $\lv(\p{m}) = x_{ij}$. Assume that there exists a polynomial $\tilde{\p{m}} \in \pset{C} \setminus \{\p{m}\}$ such that $\lv(\tilde{\p{m}})$ appears in $\ini(\p{m})$. We draw contradictions with $\tilde{\p{m}}$ by considering the two cases of $\pset{S}$ above.

(1) When $\pset{S} = \emptyset$: with the elusive minor $\p{m}$ of the form $(\{i\} \cup [p-r+1, p], [r] \cup \{j\})$ and $\lv(\tilde{\p{m}}) < \lv(\p{m})$, we have $C(\tilde{\p{m}}) \subseteq [r]$ and thus $\#C(\tilde{\p{m}}) \leq r$, meaning that $\tilde{\p{m}}$ is not a minor from $(p,q)$. But in this case there is no $(\tilde{p}, \tilde{q}) \in \ess(w)$ such that $(\tilde{p}, \tilde{q}) > (p, q)$ and $\rank(w_{\tilde{p}\tilde{q}}^T) < r$: a contradiction. 

(2) When $\pset{S} = \{(p_1, q_1), \ldots, (p_t, q_t)\} \neq \emptyset$: with $C(\p{m})$ in the form of equation~\eqref{eq:w-char-case2} and $\lv(\tilde{\p{m}}) < \lv(\p{m})$, we know that $\#C(\tilde{\p{m}}) \leq r$ (and thus $\tilde{\p{m}}$ is not a minor from $(p,q)$) and that for each $i=1, \ldots, t$, $\#(C(\p{m}) \cap [q_i]) \leq \sum_{j=1}^is_j= \min(r_i, \ldots, r_t) \leq r_i$. Let $\tilde{\p{m}}$ be a minor from $(p_k, q_k) \in \pset{S}$ for some $k \in [t]$ and $\lv(\tilde{\p{m}}) = x_{\tilde{i}\tilde{j}}$. If $k=1$, then $\tilde{j} \leq s_1 = \min(r_1, \ldots, r_t) \leq r_1$ by equation~\eqref{eq:w-char-case2}, and thus $\#C(\tilde{\p{m}}) \leq r_1$: a contradiction. If $k>1$, with $\tilde{\p{m}}$ being a polynomial in the W-characteristic set $\pset{C}$, $C(\tilde{\p{m}})$ is also of the form of equation~\eqref{eq:w-char-case2} (with $t$ replaced by $k-1$, $r$ by $r_k$, and $j$ by $\tilde{j}$, obviously). Then with $\#(C(\p{m}) \cap [q_k]) \leq r_k$ we have $\#(C(\tilde{\p{m}}) \cap [q_k]) \leq r_k$ too: another contradiction. This completes the whole proof. 
\end{proof}

  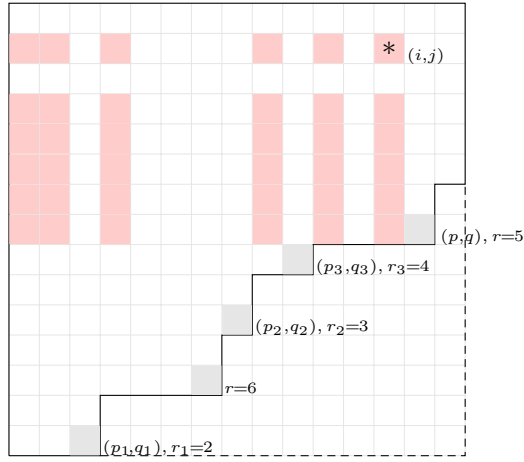
\begin{figure}[h]
    \centering

\begin{tikzpicture}
   \begin{scope}[scale =0.4]
        \draw[help lines,color=gray!20] (0,0) grid (15,15);
        \draw[black] (0,0)--(3,0)--(3,2)--(7,2)--(7,4)--(8,4)--(8,5)--(8,6)--
        (10,6)--(10,7)--(14,7)--(14,9)--(15,9)--(15,15)--(0,15)--(0,0);
         
        \node[font=\fontsize{6}{6}\selectfont,right,black] at (2.75,0.4){$(p_1\!,\!q_1),r_1\!\!=\!\!2$};
        \node[font=\fontsize{6}{6}\selectfont,right,black] at (7.75,4.4){$(p_2,\!q_2),r_2\!\!=\!\!3$};
        \node[font=\fontsize{6}{6}\selectfont,right,black] at (9.75,6.4){$(p_3,\!q_3),r_3\!\!=\!\!4$};
        \node[font=\fontsize{6}{6}\selectfont,right,black] at (6.75,2.4){$r\!\!=\!\!6$};
        \node[font=\fontsize{6}{6}\selectfont,right,black] at (13.85,7.4){$(p,\!q),r\!\!=\!\!5$};
        \node[font=\fontsize{6}{6}\selectfont,right,black] at (12.75,13.4){$(i,\!j)$};

        \filldraw[gray!20] (2.03,0.03) rectangle (2.97,0.97);
        \filldraw[gray!20] (6.03,2.03) rectangle (6.97,2.97);
        \filldraw[gray!20] (7.03,4.03) rectangle (7.97,4.97);
        \filldraw[gray!20] (9.03,6.03) rectangle (9.97,6.97);
        \filldraw[gray!20] (13.03,7.03) rectangle (13.97,7.97);

        \filldraw[red!20] (10.03,13.03) rectangle (10.97,13.97);
        \foreach \x in {7,...,11}{
         \filldraw[red!20] (10.03,\x+0.03) rectangle (10.97,\x+0.97);}
        
        \filldraw[red!20] (0.03,13.03) rectangle (0.97,13.97);
        \foreach \x in {7,...,11}{
            \filldraw[red!20] (0.03,\x+0.03) rectangle (0.97,\x+0.97);}

        \filldraw[red!20] (1.03,13.03) rectangle (1.97,13.97);
        \foreach \x in {7,...,11}{
            \filldraw[red!20] (1.03,\x+0.03) rectangle (1.97,\x+0.97);}

        \filldraw[red!20] (3.03,13.03) rectangle (3.97,13.97);
        \foreach \x in {7,...,11}{
            \filldraw[red!20] (3.03,\x+0.03) rectangle (3.97,\x+0.97);}
        
        \filldraw[red!20] (8.03,13.03) rectangle (8.97,13.97);
        \foreach \x in {7,...,11}{
            \filldraw[red!20] (8.03,\x+0.03) rectangle (8.97,\x+0.97);}
        
        \filldraw[red!20] (3.03,13.03) rectangle (3.97,13.97);
        \foreach \x in {7,...,11}{
            \filldraw[red!20] (3.03,\x+0.03) rectangle (3.97,\x+0.97);}
           
        \filldraw[red!20] (12.03,13.03) rectangle (12.97,13.97);
        \foreach \x in {7,...,11}{
            \filldraw[red!20] (12.03,\x+0.03) rectangle (12.97,\x+0.97);}
        
        \node[black] at (12.5,13.5){$\ast$};
        \draw[densely dashed,black](3,0)--(15,0)--(15,9);
   
    \end{scope}
\end{tikzpicture}
  \caption{Illustration for the construction of $\p{m}'$ in case (2) in the proof of Theorem~\ref{thm:normal}}\label{fig:normal}
  \end{figure}

The condition on $w$ to be vexillary is necessary as shown by the following example. Consider $w=1453276 \in S_7$ which is not vexillary with $\ess(w)=\{(3,3),(4,2),(6,6)\}$. Let $\pset{F}$ be the set of Fulton generators, namely all the 2-minors from $X_{33}$ and $X_{42}$ and the only 6-minor $G$ from $X_{66}$, and consider the NEW term order. Then one can easily check that all minors in $\pset{F}$ are elusive ones, and thus they form a minimal \grobner basis of $I_w$ by Theorem~\ref{thm:minimal}, but they are not the reduced one by Theorem~\ref{thm:schubert-reduced-vex}. After performing the reduction $G\xrightarrow[\ast]{\mathcal{F}\backslash\{G\}}\overline{G}$ to have a polynomial $\overline{G}$ in the reduced \grobner basis by using Theorem~\ref{thm:reductionM}, we find that $\lv(\overline{G}) = x_{16}$ and a term $x_{25}x_{32}x_{43}x_{51}x_{64}$ of $\ini(\overline{G})$ contains the leading 
variable $x_{32}$ of the elusive minor $(\{3,4\}, \{1,2\}) =x_{31}x_{42}-x_{32}x_{41}$. As a result, the W-characteristic set of $I_w$ is not normal.

Recall that the characteristic pair of a polynomial ideal consists of its lexicographic reduced \grobner basis and W-characteristic set. Next we show that the characteristic pair $(\pset{G}, \pset{C})$ of any Schubert determinantal ideal is strong, namely $\bases{\pset{G}} = \sat(\pset{C})$. 

\begin{proposition}\label{prop:strong}
  Let $\ideal{p}$ be a prime ideal in $\kx$ and $(\pset{G}, \pset{C})$ be its characteristic pair. Then $\bases{\pset{G}} = \sat(\pset{C})$.
\end{proposition}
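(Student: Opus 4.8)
The plan is to recall the definitions: $\pset{C} = [C_1, \ldots, C_n]$ is the minimal triangular set extracted from the reduced \grobner basis $\pset{G}$ of $\ideal{p}$ w.r.t. the lexicographic order induced by $x_1 < \cdots < x_n$, and $\sat(\pset{C}) = \bases{\pset{C}} : I^\infty$ where $I = \prod_{i} \ini(C_i)$. Since each $C_i \in \pset{G} \subseteq \ideal{p} = \bases{\pset{G}}$, we trivially have $\bases{\pset{C}} \subseteq \bases{\pset{G}}$, and because $\bases{\pset{G}} = \ideal{p}$ is prime, it is in particular saturated with respect to any element not lying in it; hence $\sat(\pset{C}) = \bases{\pset{C}} : I^\infty \subseteq \ideal{p} : I^\infty = \ideal{p}$ provided $I \notin \ideal{p}$ — so the first thing to check is that no $\ini(C_i)$ belongs to $\ideal{p}$. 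This follows from the fact that $C_i$ is reduced modulo $\pset{G} \setminus \{C_i\}$ and is the smallest element of $\pset{G}$ with leading variable $x_i$: if $\ini(C_i) \in \ideal{p}$, one reduces $\ini(C_i)$ by $\pset{G}$, and the reduction cannot involve $x_i$ or higher variables (by the structure of lex reduced \grobner bases), so it would produce a polynomial in $\ideal{p}$ with leading variable $x_j$ for some $j < i$ whose leading term divides a term of $\ini(C_i)$, hence of $C_i$ — contradicting that $C_i$ is reduced. Thus $I \notin \ideal{p}$ and the inclusion $\sat(\pset{C}) \subseteq \bases{\pset{G}}$ holds.

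The harder inclusion is $\bases{\pset{G}} \subseteq \sat(\pset{C})$. The idea is to show that every $G \in \pset{G}$ lies in $\bases{\pset{C}} : I^\infty$, i.e. that $I^N G \in \bases{\pset{C}}$ for some $N$. The natural tool is pseudo-division: writing $G$ as a polynomial in $\lv(G) = x_k$, one can pseudo-reduce $G$ by $C_k$ (whose leading variable is also $x_k$ and whose initial is $\ini(C_k)$), then pseudo-reduce the remainder by $C_{k-1}$, and so on down the triangular set, obtaining $I^N G \equiv \rho \pmod{\bases{\pset{C}}}$ where $\rho$ is pseudo-reduced with respect to $\pset{C}$. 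Since $I^N G \in \bases{\pset{G}} = \ideal{p}$ and $\bases{\pset{C}} \subseteq \ideal{p}$, we get $\rho \in \ideal{p}$. The goal is then to argue $\rho = 0$. Here I would invoke the characterization of $\sat(\pset{C})$ for a triangular set extracted from a lex reduced \grobner basis of a prime ideal: $\sat(\pset{C})$ is the kernel of a map to the appropriate residue field / function field, and a nonzero pseudo-reduced polynomial in $\ideal{p}$ would, being reduced in the relevant variables, contradict minimality or the prime property. More concretely, if $\rho \neq 0$ then $\rho$ has some leading variable $x_j$, and since $\rho \in \ideal{p} = \bases{\pset{G}}$, some $\lt(G')$ for $G' \in \pset{G}$ divides $\lt(\rho)$; a counting/degree argument using that $\rho$ has been pseudo-reduced by all $C_i$ with $\lv(C_i) \le x_j$ should force $G'$ to have the same leading variable $x_j$, and then $C_j \leq_{\rm lex} G'$ with $\lt(C_j) \mid \lt(G')$, but — this is the delicate point — pseudo-reduction only guarantees the degree in $x_j$ is below $\deg_{x_j}(C_j)$, not that all lower terms are reduced, so one may need an extra reduction step or an inductive argument on the number of variables.

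The main obstacle I anticipate is precisely bridging the gap between \emph{pseudo-reduced} (the output of pseudo-division by the triangular set) and \emph{reduced} (what the \grobner basis structure controls): pseudo-division by $C_k$ only bounds the degree in $x_k$, whereas to conclude membership in $\bases{\pset{C}}$ cleanly one wants genuine divisibility control. I would handle this by an induction on $n$, the number of variables: restrict to $\kx' = \fk[x_1, \ldots, x_{n-1}]$, use that $\pset{C} \cap \kx'$ and $\pset{G} \cap \kx'$ relate to the elimination ideal $\ideal{p} \cap \kx'$ (which is again prime), apply the inductive hypothesis there, and then handle the top variable $x_n$ separately using that $C_n$ (if nonzero) generates, modulo lower-variable relations and inversion of $\ini(C_n)$, the same ideal information as all of $\pset{G}_n = \{G \in \pset{G} : \lv(G) = x_n\}$ — this last fact being where primeness is essential, since over the function field $\fk(x_1, \ldots, x_{n-1})$ localized suitably, a prime ideal's slice in one further variable is principal. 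This is essentially the content of the theory in \cite{WDM20d} relating characteristic pairs of prime (more generally, equiprojectable or unmixed) ideals, and I would cite that framework rather than re-deriving the pseudo-division machinery in full.
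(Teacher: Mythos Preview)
Your proposal is correct and follows essentially the same route as the paper: show each $\ini(C_i) \notin \ideal{p}$ using reducedness of $\pset{G}$ (the paper's version is slightly slicker --- if $\lt(G)$ divides some term $\p{t}$ of $\ini(C_i)$, then $\lt(G)$ divides the term $\p{t}\cdot\lv(C_i)$ of $C_i$, contradicting reducedness directly), then use primeness to conclude $\sat(\pset{C}) \subseteq \ideal{p}$.

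The one place you diverge is the inclusion $\bases{\pset{G}} \subseteq \sat(\pset{C})$, which you flag as ``the harder inclusion'' and sketch a pseudo-division/induction argument for. In the paper this is not proved at all but simply invoked as a known general fact about W-characteristic sets (\cite[Proposition~3.1]{W2016o}): for \emph{any} ideal $\ideal{I}$ with reduced lex \grobner basis $\pset{G}$ and W-characteristic set $\pset{C}$, one always has $\bases{\pset{G}} \subseteq \sat(\pset{C})$ --- primeness plays no role there. So your instinct at the end (``I would cite that framework rather than re-deriving the pseudo-division machinery'') is exactly what the paper does, and all your exploratory work on pseudo-reduction, the $\rho$ argument, and induction on $n$ is unnecessary. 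The only place primeness is actually used is for the reverse inclusion $\sat(\pset{C}) \subseteq \ideal{p}$, via the standard $FI^k \in \ideal{p}$, $I \notin \ideal{p}$ $\Rightarrow$ $F \in \ideal{p}$ step, which you also have.
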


\begin{proof}
  Write $\pset{C} = [C_1, \ldots, C_r]$. We first prove that for each $i=1, \ldots, r$, $\ini(C_i) \not \in \bases{\pset{G}}$. Otherwise, $\ini(C_i) \in \bases{\pset{G}}$ implies $\ini(C_i)  \xrightarrow[*]{\pset{G}} 0$, and thus there exists a term $\p{t} \in \term(\ini(C_i))$ and $G\in \pset{G}$ such that $\lt(G) | \p{t}$, and thus $\lt(G) | \p{t}\lv(C_i)$. Note that $\p{t} \lv(C_i) \in \term(C_i)$, and this contradicts that $\pset{G}$ is reduced.

  With $\bases{\pset{C}} \subseteq \bases{\pset{G}} = \ideal{p} \subseteq \sat(\pset{C})$ by \cite[Proposition~3.1]{W2016o}, it suffices to prove that for any $F \in \sat(\pset{C})$, we have $F \in \ideal{p}$. Since $F \in \sat(\pset{C})$, there exists an integer $k$ such that $FI^k \in \bases{\pset{C}} \subseteq \ideal{p}$, where $I = \prod_{i=1}^r \ini(C_i)$. Since $i=1, \ldots, r$, $\ini(C_i) \not \in \bases{\pset{G}}=\ideal{p}$, with $\ideal{p}$ prime and thus radical, we have $I \not \in \ideal{p}$, $I^k \not \in \ideal{p}$, and thus $F \in \ideal{p}$. 
\end{proof}

\begin{corollary}\label{cor:strong}
  The characteristic pair of any Schubert, one-sided, and two-sided ladder determinantal ideal is strong. 
\end{corollary}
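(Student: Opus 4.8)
The plan is to obtain Corollary~\ref{cor:strong} as an immediate consequence of Proposition~\ref{prop:strong}, so that the whole argument reduces to checking that each of the three families of ideals is prime. Indeed, Proposition~\ref{prop:strong} asserts that for a prime ideal $\ideal{p} \subseteq \kx$ with characteristic pair $(\pset{G}, \pset{C})$ one has $\bases{\pset{G}} = \sat(\pset{C})$, i.e.\ the characteristic pair is strong; and this holds irrespective of which lexicographic variable order is used to define $\pset{C}$. Hence, once primeness of Schubert, one-sided, and two-sided ladder determinantal ideals is available, the corollary follows with no further work.

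For a Schubert determinantal ideal $I_w$, $w \in S_n$, I would cite the classical fact that $I_w$ is the radical, indeed prime, defining ideal of the irreducible matrix Schubert variety $\overline{X_w}$, as established in \cite{FUL92F} and \cite{KM05g}. For a one-sided ladder determinantal ideal $I(\p{a},\p{b},\p{r})$, I would invoke Theorem~\ref{thm:one-sided-schubert}: taking $n = a_k + b_1$, its generators coincide with the Fulton generators of $I_w$ for the associated vexillary permutation $w \in S_n$, so $I(\p{a},\p{b},\p{r})$ is precisely a Schubert determinantal ideal (after the harmless extension of the polynomial ring by the indeterminates of the $n \times n$ generic matrix attached to $w$ that do not occur in the ladder), hence prime. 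Alternatively one may cite directly that one-sided ladder determinantal rings are normal domains \cite{CON95L}.

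For a two-sided ladder determinantal ideal $I(L,\p{r})$, primeness of $I(L,\p{r})$ inside $\fk[L]$ is part of the theory of mixed ladder determinantal varieties developed in \cite{GOR07M}, where these varieties are irreducible and their defining ideals with the prescribed rank conditions are prime; extending this ideal from $\fk[L]$ to $\kx$ by adjoining the remaining variables of $X$ keeps it prime. Collecting the three cases and applying Proposition~\ref{prop:strong} gives the corollary.

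Since all three primeness statements are already in the literature, I do not expect a genuine obstacle; the only delicate point is the bookkeeping around extending a prime ideal along an inclusion of polynomial rings obtained by adjoining new indeterminates, which is used twice above and which preserves primeness. If instead one wanted a self-contained proof, the main effort would shift to re-proving these primeness results — for instance via straightening-law and standard-monomial techniques — but that is unnecessary given the cited results.
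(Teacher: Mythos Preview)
Your proposal is correct and follows essentially the same approach as the paper: reduce to Proposition~\ref{prop:strong} and then invoke primeness of each family, citing \cite{FUL92F} for Schubert ideals, Theorem~\ref{thm:one-sided-schubert} for one-sided ladder ideals, and \cite{GOR07M} for two-sided ladder ideals. The paper's proof is a one-line citation of exactly these sources, so your additional remarks on ring extensions are more detailed than what the authors provide but entirely in the same spirit.
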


\begin{proof}
  Straightforward because any Schubert, one-sided, and two-sided ladder determinantal ideal is prime by \cite{FUL92F}, Theorem~\ref{thm:one-sided-schubert}, and \cite[Theorem~1.18]{GOR07M} respectively. 
\end{proof}

\section{Concluding remarks}
\label{sec:con}

In this paper we propose a unified framework of blockwise determinantal ideals to study many existing kinds of determinantal ideals. Focusing on the reduction among \grobner bases of blockwise determinantal ideals, we establish several criteria to verify whether their \grobner bases are minimal or reduced ones w.r.t. (anti-)diagonal term orders. The reduced \grobner bases of Schubert determinantal ideals, the most representative blockwise determinantal ideals, are studied in detail. Fundamental properties of W-characteristic sets and characteristic pairs of Schubert determinantal ideals are also proven.

In existing study on determinantal ideals, specialization of some specific entries of the generic matrix is allowed. For example the CDG generators in \cite{HPW22G} are obtained by taking minors of the generic matrix after some specialization and they are also identified as \grobner bases of Schubert determinantal ideals. The current techniques developed in this paper for \grobner bases of blockwise determinantal ideals do not apply to generic matrices after specialization in general, and the extension to such matrices is under investigation. 

The results presented in Section~\ref{sec:w-char} first introduces the theory of triangular set to the study of Schubert determinantal ideals. In particular, the characteristic pair $(\pset{G}, \pset{C})$ of any Schubert determinantal ideal $I_w$ is proven to be strong, namely $I_w = \bases{\pset{G}} = \sat(\pset{C})$. With the rich interconnections between the initial ideal $\lt(I_w)$, the (bumpless) pipe dreams, Stanley-Reisner simplicial complex, and (double) Schubert polynomials, it would be very interesting to reveal the underlying relationships between W-characteristic sets and these combinatorial objects. 

\section*{Acknowledgements}
The authors would like to thank Weifeng Shang for his helpful discussions with us on lexicographic (anti-)diagonal term orders.

\bibliographystyle{amsplain}

\bibliography{GBforBlock}
\end{document}